\newtheorem{thm}{Theorem}[section]
\newtheorem{lem}[thm]{Lemma}
\newtheorem{prop}[thm]{Proposition}
\newtheorem{exam}[thm]{Example}
\newtheorem{defn}[thm]{Definition}
\newtheorem{conj}[thm]{Conjecture}
\theoremstyle{remark}
\newtheorem{rem}[thm]{Remark}
\newtheorem*{exam*}{Example}
\def\bell{{\boldsymbol{\large {\ell}}}} 
\def\bnu{{\boldsymbol{\large {\nu}}}} 
\def\bka{{\boldsymbol{\large {\kappa}}}}
 \def\a{{\alpha}}
 \def\b{{\beta}}
 \def\g{{\gamma}}
 \def\k{{\kappa}}
 \def\l{{\lambda}}
 \def\la{{\langle}}
 \def\ra{{\rangle}}
 \def\sh{{\mathsf h}}
 \def\sB{{\mathsf B}}
 \def\sH{{\mathsf H}}
 \def\sP{{\mathsf P}}
 \def\sQ{{\mathsf Q}}
 \def\sR{{\mathsf R}}
 \def\xb{{\mathbf x}}
 \def\yb{{\mathbf y}}
 \def\CH{{\mathcal H}}
 \def\CI{{\mathcal I}}
 \def\CL{{\mathcal L}}
 \def\CE{{\mathcal E}}
 \def\CV{{\mathcal V}}
 \def\NN{{\mathbb N}}
 \def\QQ{{\mathbb Q}}
 \def\RR{{\mathbb R}}
 \def\ZZ{{\mathbb Z}}
\def\dim{\operatorname{dim}}
\newcommand{\wh}{\widehat}
\def\f{\frac}
\def\one{{\mathbf{1}}}
\def\xt{\tilde x}
\DeclareMathAlphabet{\mathpzc}{OT1}{pzc}{m}{it}
\def\za{\mathpzc{a}}
\begin{document}

\title[ ]
{Hahn polynomials for hypergeometric distribution}

\date{December 21, 2020}

\author{Plamen~Iliev}
\address{P.~Iliev, School of Mathematics, Georgia Institute of Technology, 
Atlanta, GA 30332--0160, USA}
\email{iliev@math.gatech.edu}

\author{Yuan~Xu}
\address{Y.~Xu, Department of Mathematics, University of Oregon, Eugene, 
OR 97403--1222, USA}
\email{yuan@uoregon.edu}
\thanks{The first author is partially supported by Simons Foundation Grant \#635462.}

\keywords{Hypergeometric distribution, Hahn polynomials, several variables, factorizations}
\subjclass[2010]{33C50, 33C70, 42C05}

\begin{abstract}
Orthogonal polynomials for the multivariate hypergeometric distribution are defined on lattices in polyhedral
domains in $\RR^d$. Their structures are studied through a detailed analysis of classical Hahn polynomials with
negative integer parameters. Factorization of the Hahn polynomials is explored and used to explain 
the relation between the index set of orthogonal polynomials and the lattice set in polyhedral domain. In the multivariate case, these constructions lead to nontrivial families of hypergeometric polynomials vanishing on lattice polyhedra. The generating functions and bispectral properties of the orthogonal polynomials are also discussed. 
\end{abstract}
 \maketitle

\section{Introduction}
\setcounter{equation}{0}

We study orthogonal polynomials with respect to the hypergeometric distribution in several variables. Let
$N$ be a positive integer and let $\ell_i$, $1\le i \le d+1$, be nonnegative integers, such that $\ell_i \le N$ 
and $\ell_i+\ell_j \ge N$ for $i \ne j$. The hypergeometric 
distribution in $d$ variables is defined by 
\begin{equation}\label{eq:poly-weight}
  \sH_{\ell,N}(x) : =  \frac{N!}{(-|\ell |)_N}\prod_{i=1}^d \frac{(-\ell_i)_{x_i}}{x_i!} \frac{(-\ell_{d+1})_{N-|x|}}{(N-|x|)!}, 
\end{equation}
where $|\ell| = \ell_1+\cdots+ \ell_{d+1}$, $|x|=x_1+\cdots +x_d$ and $(a)_k = a(a+1)\ldots(a+k-1)$ is the
Pochhammer symbol; it is a probability distribution on the set $V_{\ell,N}$ of discrete polyhedral domain
defined by 
\begin{equation}\label{eq:poly-domain}
   V_{\ell, N}^d: = \left\{x \in \NN_0^d: 0 \le x_i \le \ell_i, \, 1 \le i \le d, \, \hbox{and}\, N-\ell_{d+1} \le |x| \le N\right\}.
\end{equation}
We studied orthogonal polynomials with respect to this distribution in \cite{IX20}, which are called the 
Hahn polynomials on polyhedra, since an orthogonal basis can be explicitly given in terms of the Hahn 
polynomials. These polynomials are closely related to the classical Hahn polynomials of several variables 
\cite{KM} that are orthogonal with respect to the weight function 
\begin{equation}\label{eq:HK-weight}
 W_{\k,N} (x) = \frac{N!}{(|\k|+ d+1)_N} \prod_{i=1}^d   \frac{(\k_i + 1)_{x_i}}{x_i!} \frac{(\k_{d+1}+1)_{N-|x|}}{(N-|x|)!}  
\end{equation}
where $\k_i > -1$, $1\le i \le d+1$, defined on the lattice points within the simplex $V_N^d$ defined by
$V_N^d = \{x \in \NN_0^d: |x| \le N\}$.

In \cite{IX20} a family of Hahn polynomials on polyhedra, denoted by $\sQ_\nu(\cdot ; \ell ,N)$, are explicitly
given with $\nu$ belonging to an index set $H_{\ell,N}^d$, which is a subset of $V_N^d$, but different from $V_{\ell, N}^d$. The set 
$H_{\ell,N}^d$ has a complicated structure and one of the main results of \cite{IX20} is to show, with a strenuous 
combinatorial proof, that $H_{\ell,N}^d$ and $V_{\ell,N}^d$ have the same cardinality, so that 
$\{\sQ_\nu(\cdot ; \ell ,N): \nu \in H_{\ell,N}^d\}$ is an orthogonal basis. The definition of $H_{\ell,N}^d$ 
comes from setting $\k_i$ as negative integers $- \ell_i -1$ in a basis of classical Hahn polynomials 
and collecting those polynomials whose norms are finite and non-zero. Since the basis used in \cite{IX20} 
is specifically normalized, one may ask if the set $H_{\ell,N}^d$ is uniquely determined. Furthermore, 
since the set $H_{\ell,N}^d$ is a subset of $V_N^d$, one may ask if those classical Hahn polynomials 
whose indices lie outside of $H_{\ell, N}^d$ when setting $\k_i$ to be $- \ell_i -1$ are trivial or undefined. 

The purpose of this paper is to answer these questions and to study the structures of the Hahn 
polynomials for hypergeometric distribution. More specially, we want to understand the formal process 
of setting $\k_i$ as negative integers $- \ell_i -1$ rigorously and, in particular, to determine if and how 
the formal process leads to structural relations, such as generating functions, recurrence relations and difference equations,
for the Hahn polynomials on the polyhedra. The question has interesting implications even for the Hahn 
polynomials of one variable, which we briefly describe to facilitate the discussion. 

The classical Hahn polynomials $Q_m(x; a,b, N)$ are orthogonal with respect to a discrete inner product 
defined on the set $V_N=\{x \in \NN_0: 0 \le x \le N\}$ and $\{Q_m(x;a,b,N): 0\le m \le N\}$ is an orthogonal 
basis. Assume, for example, $\ell_1,\ell_2 \le N$ and $\ell_1+\ell_2 \ge N$. Then, for $d=1$, a basis of 
the orthogonal polynomials for the hypergeometric distribution is given by 
$\{Q_m(x; -\ell_1-1, - \ell_2-1, N):  0 \le m \le \ell_1\}$ and the orthogonality is defined on the set 
$V_{\ell,N}=\{x \in \NN: N - \ell_2 \le x \le \ell_1\}$. We shall show that all polynomials 
$Q_m(x; -\ell_1-1, -\ell_2-1, N)$ with $m >\ell_1$ contain the factor 
$\prod_{j=N-\ell_2}^{\ell_1} (x-j)$. In particular, this shows that all polynomials with index outside 
$H_{\ell,N} = \{0,1,\ldots, \ell_1\}$ vanishes on the set $V_{\ell,N}$. 
 
The situation for $d > 1$, however, is much more complicated. When we set $\k_i =  - \ell_i -1$, 
those classical Hahn polynomials with indices outside $H_{\ell,N}^d$ still vanish on the polyhedral
lattice set $V_{\ell,N}^d$; however, they do not always contain linear factors that vanish trivially on 
$V_{\ell,N}^d$. In fact, they lead to non-trivial, likely irreducible, polynomials that vanish on large subsets 
of lattice points. The complication for $d  > 1$ requires a careful consideration when deriving structural 
properties for the Hahn polynomials on polyhedra from those of classical Hahn polynomials. 

The paper is organized as follows. In the next section we consider the case $d =1$ and study Hahn 
polynomials with negative integer parameters.  The definition of the Hahn polynomials on the 
polyhedra is discussed in Section 3, which ends with examples of nontrivial Hahn polynomials 
that vanish on a large set of lattice points. The examples lead to the study of factorization of the Hahn 
polynomials of two variables in Section 4. The generating functions of the Hahn polynomials on the 
polyhedra are discussed in Section 5, whereas their bispectral properties are described in Section 6, 
which contains, in particular, explicit recurrence relations and difference equations satisfied by these polynomials.  

\section{Hahn polynomials with negative integer parameters}
\setcounter{equation}{0}

We start with a short review of the classical Hahn polynomials that depend on two real parameters 
$a, b > -1$. The main goal of this section is to study Hahn polynomials when $a$ and $b$ become 
negative integers, for which some of the properties of the Hahn polynomials remain valid whereas 
others become more subtle. 

\subsection{Classical Hahn polynomials}
For $a, b > -1$, the classical Hahn polynomials are ${}_3F_2$ hypergeometric functions given by
\begin{equation}\label{eq:HahnQ}
  Q_n(x; a, b, N) := {}_3 F_2 \left( \begin{matrix} -n, n+a + b+1, -x\\
       a+1, -N \end{matrix}; 1\right), \qquad n = 0, 1, \ldots, N.    
\end{equation}
They are discrete orthogonal polynomials with respect to the weight function
$$
   w_{a,b}(x;N) = \frac{N!}{(a+b+2)_N} \frac{(a+1)_x (b+1)_{N-x}}{x! (N-x)!}, \qquad x =0,1,\ldots, N, 
$$
over the set of integers $\{0,1,\ldots, N\}$, More precisely, they satisfy
\begin{align} \label{eq:orthoHahnQ}
 & \sum_{x=0}^N  Q_n(x; a, b, N) Q_m(x; a, b, N) w_{a,b}(x;N)\\
 &\quad = \frac{(-1)^n n! (b+1)_n (a+b+N+2)_{n} (n+a+b+1)}
     { (a+1)_n (a+b+2)_n (-N)_n (2n+a+b+1)} \delta_{n,m}, \quad n,m\le N. \notag
\end{align}
Furthermore, these polynomials can also be defined via a generating function   
\begin{equation} \label{eq:generatingHahn}
  (1+t)^N \frac{P_n^{(a,b)}(\frac{1-t}{1+t})}{P_n^{(a,b)}(1)} 
     = \sum_{x =0}^N \binom{N}{x} Q_n(x; a, b, N) t^x,
\end{equation}
where $P_n^{(a,b)}$ is the Jacobi polynomial defined by 
\begin{equation*}
    P_n^{(a,b)}(t)  =  \frac{(a+1)_n}{n!} {}_2F_1\left( \begin{matrix} -n, n+a + b+1 \\
       a+1 \end{matrix};  \frac{1-t}{2}\right).
\end{equation*}
They also satisfy the relation
\begin{equation}\label{eq:HahnQrev}
   Q_n(x; a, b, N) = (-1)^n\frac{(b+1)_n}{(a+1)_n} Q_n(N-x,b,a,N).       
\end{equation}

\subsection{Hahn polynomials with negative integer parameters}
Let $N$ be a positive integer. Let $\ell_1$ and $\ell_2$ be two positive integers that satisfy 
\begin{equation}\label{eq:ell-N}
  \ell_1+\ell_2 \ge N.
\end{equation}
We consider the hypergeometric distribution or the weight function
\begin{equation} \label{eq:H-weight}
  \sH_{\ell, N}(x) =   \frac{\binom{\ell_1}{x}\binom{\ell_2}{N-x}}{\binom{\ell_1+\ell_2}N}
     = \frac{N!}{(-\ell_1-\ell_2)_N} \frac{(-\ell_1)_x (-\ell_2)_{N-x}}{x! (N-x)!}, \quad x \in \NN_0.
\end{equation}
Throughout this paper, we shall adopt the following notation,
$$
    a \wedge b = \min\{a, b\} \quad \hbox{and}\quad a \vee b = \max\{a,b\}, \qquad a, b \in \RR.
$$

\begin{lem}
Let $N,  \ell_1$ and $\ell_2$ be positive integers satisfying \eqref{eq:ell-N}. Then
\begin{equation} \label{eq:H-weight2}
   \sH_{\ell,N}(x) =   \frac{(\ell_1 \vee N)!(\ell_2\vee N)!}{N! (-\ell_1-\ell_2)_N} 
   \frac{( -\ell_1 \wedge N)_x (-\ell_2 \wedge N)_{N-x}}{(x-N + \ell_2 \vee N)! (\ell_1 \vee N -x)!}.
\end{equation}
In particular, the function $\sH_{\ell, N}$ is positive and supported on the set 
$$
    V_{\ell,N} := \big \{x \in \NN_0:  N - \ell_2 \wedge N \le x \le \ell_1 \wedge N \big \}. 
$$
\end{lem}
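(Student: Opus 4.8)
The plan is to treat \eqref{eq:H-weight2} as an elementary reorganization of the Pochhammer symbols in \eqref{eq:H-weight}, and then to read off the support and positivity directly from the reorganized form. The only tool needed throughout is the reflection identity $(-m)_k = (-1)^k m!/(m-k)!$, valid for integers $0 \le k \le m$, together with the fact that $(-m)_k = 0$ once the integer $k$ exceeds the nonnegative integer $m$. Since $\sH_{\ell,N}$ is supported on $x \in \{0,1,\ldots,N\}$, it is enough to establish the identity on that range.

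First I would prove \eqref{eq:H-weight2}. Because the right-hand side is invariant under the simultaneous substitution $x \mapsto N-x$, $\ell_1 \leftrightarrow \ell_2$ (each factor either is symmetric or is exchanged with its partner), it suffices to organize the argument according to whether $\ell_1 \le N$ or $\ell_1 > N$, the role of $\ell_2$ being handled by this symmetry. When $\ell_1,\ell_2 \le N$ all the maxima and minima collapse, $\ell_i \vee N = N$ and $\ell_i \wedge N = \ell_i$, and the right-hand side of \eqref{eq:H-weight2} coincides with \eqref{eq:H-weight} once one notes that $(x-N+N)! = x!$ and $(\ell_1\vee N - x)! = (N-x)!$ reproduce the two denominators. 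When $\ell_1 > N$ the substitutions $\ell_1 \vee N = \ell_1$ and $\ell_1 \wedge N = N$ replace $(-\ell_1)_x/x!$ by $(-N)_x/(\ell_1-x)!$, up to the prefactor $\ell_1!/N!$, and after cancelling the common factors $(-\ell_2)_{N-x}/(x!\,(-\ell_1-\ell_2)_N)$ the required equality reduces to $N!\,(-\ell_1)_x\,(\ell_1-x)! = \ell_1!\,(-N)_x\,(N-x)!$; the reflection identity turns each side into $(-1)^x N!\,\ell_1!$. The remaining case $\ell_1,\ell_2 > N$ is treated the same way, now expanding all four Pochhammer symbols: the factorial $(x-N+\ell_2)!$ matches the $(\ell_2-N+x)!$ arising from $(-\ell_2)_{N-x}$, and the signs combine to $(-1)^N$ on both sides.

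With \eqref{eq:H-weight2} in hand, the final assertion is immediate for $x \in \{0,1,\ldots,N\}$. The two denominator factorials have nonnegative arguments on this range, since $\ell_1 \vee N - x \ge N - x \ge 0$ and $x - N + \ell_2 \vee N \ge x \ge 0$, so they are finite and positive, as are $(\ell_1\vee N)!$, $(\ell_2\vee N)!$ and $N!$. The factor $(-\ell_1-\ell_2)_N$ carries the sign $(-1)^N$, because every one of its $N$ factors is negative once $\ell_1+\ell_2 \ge N$ by \eqref{eq:ell-N}. Finally $(-\ell_1\wedge N)_x$ is nonzero exactly when $x \le \ell_1 \wedge N$, with sign $(-1)^x$, and $(-\ell_2\wedge N)_{N-x}$ is nonzero exactly when $x \ge N - \ell_2\wedge N$, with sign $(-1)^{N-x}$. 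Hence $\sH_{\ell,N}(x) \ne 0$ precisely on $V_{\ell,N}$, where the signs combine as $(-1)^x(-1)^{N-x}/(-1)^N > 0$; and $V_{\ell,N}$ is nonempty because $N - \ell_2\wedge N \le \ell_1\wedge N$ follows from $\ell_2 \ge 0$ and from $N-\ell_2 \le \ell_1$, the latter being exactly \eqref{eq:ell-N}.

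I expect the only delicate point to be the bookkeeping in the case $\ell_i > N$: the reorganization in \eqref{eq:H-weight2} effectively interchanges the roles of the factorials $x!$ and $(N-x)!$ relative to \eqref{eq:H-weight}, so one must pair each capped Pochhammer symbol with the correct shifted factorial before invoking the reflection identity. Once this pairing is set up correctly every case collapses to the same factorial expression, and no estimates beyond the reflection identity are required.
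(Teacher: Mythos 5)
Your proof is correct and takes essentially the same route as the paper's: split into cases according to whether each $\ell_i$ is $\le N$ or $> N$, convert between \eqref{eq:H-weight} and \eqref{eq:H-weight2} using the reflection identity $(-m)_n = (-1)^n\, m!/(m-n)!$, and read off the support from the vanishing of $(-m)_n$ for $n > m$. The only differences are organizational: your symmetry reduction $x \mapsto N-x$, $\ell_1 \leftrightarrow \ell_2$ trims a case, and your explicit sign count $(-1)^x(-1)^{N-x}/(-1)^N = 1$ for positivity is actually more careful than the paper's terse justification.
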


\begin{proof}
If $\ell_1,  \ell_2 \le N$, then \eqref{eq:H-weight2} and \eqref{eq:H-weight} coincide. In all other cases,
we can rewrite $\sH_{\ell,N}$ given in \eqref{eq:H-weight} to the formula in \eqref{eq:H-weight2} by
using the identity $(-m)_n = (-1)^n m!/(m-n)!$ for $m, n \in \NN_0$. 
The claim on the support of $\sH_{\ell,N}$ follows readily from $(-m)_n > 0$ if $n \le m$ and $(-m)_n = 0$ if $n > m$.
\end{proof}

We consider orthogonal polynomials with respect to the inner product
\begin{align} \label{eq:inner-sQ}
   \la f,g\ra_{\ell,N} =  \sum_{x=  N -  \ell_2\wedge N }^{\ell_1\wedge N} f(x) g(x) \sH_{\ell,N}(x), 
\end{align}
which satisfies $\la 1,1\ra_{\ell,N} =1$. Applying the Gram-Schmidt process, we can identify a family of 
orthogonal polynomials $\{\sQ_n: 0 \le n \le \deg_{\ell,N}\}$, where 
$$
   \deg_{\ell,N} := \ell_1\wedge N +\ell_2 \wedge N - N.
$$
Evidently, $\sH_{\ell, N}(x) =  w_{-\ell_1-1,-\ell_2-1}(x;N)$. We make the following definition. 

\begin{defn}
Let $\ell_1, \ell_2$ and $N$ be positive integers such that $\ell_1+\ell_2 \ge N$. 
For $0 \le n \le \ell_1 \wedge N$, define Hahn polynomials with negative integer parameters by  
\begin{align}\label{eq:HahnQN}
  \sQ_n(x; \ell_1,\ell_2, N) \, & = Q_n(x; -\ell_1-1,-\ell_2-1,N) \\
      & = {}_3 F_2 \left( \begin{matrix} -n, n-\ell_1-\ell_2 -1, -x\\
       -\ell_1, - N \end{matrix}; 1\right),   \notag
\end{align}
where $Q_n$ is the classical Hahn polynomial \eqref{eq:HahnQ}. 
\end{defn}

For $0 \le n \le \deg_{\ell, N}$, the orthogonality of the polynomials $\sQ_n(\cdot;\ell_1,\ell_2,N)$ 
follows from that of classical Hahn polynomials. Indeed, the identity 
\eqref{eq:orthoHahnQ} involves only rational functions in $a, b$, so that we can apply analytic continuation 
to extend it to $a, b$ being negative integers, while the support set of $\sH_{\ell,N}$ shows that the inner 
product becomes \eqref{eq:inner-sQ}.

\begin{thm}
For $0 \le m, n \le \deg_{\ell, N}$, 
$$
 \big \langle \sQ_n(\cdot; \ell_1,\ell_2,N), \sQ_m(\cdot; \ell_1,\ell_2,N) \big \rangle_{\ell,N} = 
     \delta_{m,n}  \sB_n(\ell, N),
$$
where 
$$
  \sB_n(\ell,N) = \frac{(-1)^{n} n! (-\ell_2)_n(-\ell_1-\ell_2+N)_n (-n+\ell_1+\ell_2+1)}
  {(-\ell_1)_n(-\ell_1-\ell_2)_n (-N)_n (-2n+\ell_1+\ell_2+1)}.
$$
\end{thm}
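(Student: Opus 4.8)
The plan is to derive the theorem by analytic continuation of the classical orthogonality relation \eqref{eq:orthoHahnQ} from the range $a,b>-1$ to the negative integer values $a=-\ell_1-1$, $b=-\ell_2-1$, exactly as indicated in the paragraph preceding the statement. First I would confirm that every object in the statement is well-defined: by \eqref{eq:HahnQN} the series defining $\sQ_n$ terminates because of the numerator parameter $-n$, and since $n\le\deg_{\ell,N}\le \ell_1\wedge N\le\ell_1$ the lower parameter $-\ell_1$ contributes no vanishing Pochhammer symbol before the truncation, so $\sQ_n(\cdot;\ell_1,\ell_2,N)$ is a genuine polynomial for $0\le n\le\deg_{\ell,N}$.

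The core of the argument is to treat both sides of \eqref{eq:orthoHahnQ} as rational functions of $(a,b)$. For each fixed $x$ the weight $w_{a,b}(x;N)$ is a ratio of Pochhammer symbols and hence rational in $(a,b)$, so the left-hand side --- a finite sum over $x=0,\dots,N$ --- is rational, and the right-hand side is manifestly rational. Because the identity holds on the open set $\{a,b>-1\}$, the two rational functions coincide identically, and the identity therefore remains valid at $(a,b)=(-\ell_1-1,-\ell_2-1)$ as soon as this point is a regular point of both sides. I would verify regularity by checking that the denominator factors of the right-hand side are nonzero under the hypotheses $0\le n\le\deg_{\ell,N}$ and $\ell_1+\ell_2\ge N$: one has $(-\ell_1)_n\ne0$ since $n\le\ell_1$, $(-\ell_1-\ell_2)_n\ne0$ since $\ell_1+\ell_2\ge N\ge n$, $(-N)_n\ne0$ since $n\le N$, and the linear factor $-2n+\ell_1+\ell_2+1$ is in fact strictly positive throughout the admissible range of $n$ (a short case analysis on whether $\ell_1,\ell_2$ exceed $N$).

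With the continuation justified, I would identify the two sides with the quantities in the statement. On the left, since $\sH_{\ell,N}(x)=w_{-\ell_1-1,-\ell_2-1}(x;N)$ and the preceding Lemma shows that this weight is supported precisely on $V_{\ell,N}$, the sum over $x=0,\dots,N$ collapses to the sum over $V_{\ell,N}$, which is exactly the inner product \eqref{eq:inner-sQ}. On the right, the evaluation is Pochhammer bookkeeping: $(b+1)_n\mapsto(-\ell_2)_n$, $(a+b+N+2)_n\mapsto(-\ell_1-\ell_2+N)_n$, $(a+1)_n\mapsto(-\ell_1)_n$, $(a+b+2)_n\mapsto(-\ell_1-\ell_2)_n$, while the two linear factors become $n+a+b+1\mapsto-(-n+\ell_1+\ell_2+1)$ and $2n+a+b+1\mapsto-(-2n+\ell_1+\ell_2+1)$, whose two sign changes cancel to produce precisely $\sB_n(\ell,N)$.

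The step requiring the most care is the justification of the analytic continuation --- specifically, confirming that $(a,b)=(-\ell_1-1,-\ell_2-1)$ is a regular point of both sides of \eqref{eq:orthoHahnQ}. The only genuinely delicate point is that the two distinct linear factors $n+a+b+1$ and $2n+a+b+1$ vanish on different lines in the $(a,b)$-plane, so one must confirm that neither creates a pole nor a spurious $0/0$ at the specialization; the observations $-2n+\ell_1+\ell_2+1>0$ and $-n+\ell_1+\ell_2+1\ge N+1-n>0$ settle this, after which all remaining manipulations are purely formal and the displayed value $\sB_n(\ell,N)$ follows.
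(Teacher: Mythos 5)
Your proposal is correct and takes essentially the same route as the paper: the paper's proof is precisely the observation that \eqref{eq:orthoHahnQ} involves only rational functions of $(a,b)$, so it extends by analytic continuation to $a=-\ell_1-1$, $b=-\ell_2-1$, while the support of $\sH_{\ell,N}$ collapses the sum over $\{0,1,\ldots,N\}$ to the inner product \eqref{eq:inner-sQ}. Your additional checks (nonvanishing of the denominator factors $(-\ell_1)_n$, $(-\ell_1-\ell_2)_n$, $(-N)_n$, $-2n+\ell_1+\ell_2+1$ in the range $n\le\deg_{\ell,N}$, and the sign cancellation in the Pochhammer bookkeeping) simply make explicit details the paper leaves to the reader.
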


From \eqref{eq:HahnQN}, the Hahn polynomials with negative integer parameters are well defined 
if $0 \le n \le  \ell_1\wedge N$. If $\ell_2 \ge N$, then $\deg_{\ell, N} = \ell_1\wedge N$. If $\ell_2 < N$, 
however, $\ell_1\wedge N > \deg_{\ell,N}$, we have more polynomials than needed. It turns out that the
extra polynomials are entirely zero when restricted on $V_{\ell,N}$. 
 
\begin{thm} \label{thm:factor1D}
 Assume $\ell_2 \le N$.  Let 
$n =\deg_{\ell,N} + m +1$ for $m  = 0,1,\ldots, N- \ell_2 -1$. Then
\begin{align} \label{eq:Qfactor1}
 \sQ_n(x; \ell_1,\ell_2, N) = \, &  \frac{ (-N+\ell_2+1)_m}{(-\ell_1\wedge N)_n}  \prod_{j=N-\ell_2}^{\ell_1\wedge N} (x-j)\\
     & \qquad\qquad \times \sQ_m(x, N-\ell_2-1,| N-\ell_1| -1, \ell_1 \vee N).  \notag
\end{align}
In particular, $\sQ_n(x;\ell_1,\ell_2,N)$ vanishes on $V_{\ell,N}$ if $\deg_{\ell,N} < n \le \ell_1\wedge N$. 
\end{thm}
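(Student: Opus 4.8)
The plan is to establish the displayed identity \eqref{eq:Qfactor1} as an equality of two polynomials of degree $n$ in $x$, with the ``in particular'' vanishing read off as an immediate corollary. First I would record the degree bookkeeping. Writing $D:=\deg_{\ell,N}=\ell_1\wedge N+\ell_2-N$ (using $\ell_2\le N$), the product $\prod_{j=N-\ell_2}^{\ell_1\wedge N}(x-j)$ has degree $D+1$, the shifted factor $\sQ_m(x;N-\ell_2-1,|N-\ell_1|-1,\ell_1\vee N)$ has degree $m$, and $\sQ_n$ has degree $n=D+m+1$, so both sides do have degree $n$. The strategy then splits into two parts: (i) show that $\prod_{j=N-\ell_2}^{\ell_1\wedge N}(x-j)$ divides $\sQ_n$, and (ii) identify the degree-$m$ cofactor with the shifted Hahn polynomial and fix the constant.

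For (i) I would argue through orthogonality rather than compute directly. Since $\ell_2\le N$, the support $V_{\ell,N}=\{N-\ell_2,\ldots,\ell_1\wedge N\}$ consists of exactly $D+1$ points, which are precisely the roots of $\prod_{j}(x-j)$. The classical relation \eqref{eq:orthoHahnQ} is a rational identity in $a,b$ whose right-hand side is the zero rational function whenever $m\neq n$; analytic continuation to $a=-\ell_1-1$, $b=-\ell_2-1$ (legitimate because the weight and each $\sQ_k$ with $k\le\ell_1\wedge N$ remain finite there) therefore gives $\langle \sQ_n,\sQ_m\rangle_{\ell,N}=0$ for all $m\neq n$ with $m,n\le\ell_1\wedge N$, with no $0/0$ ambiguity. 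By the orthogonality established above, $\sQ_0,\ldots,\sQ_D$ have norms $\sB_k(\ell,N)>0$, hence form a basis of the $(D+1)$-dimensional space of functions on $V_{\ell,N}$; since the restriction of $\sQ_n$ to $V_{\ell,N}$ lies in this space yet is orthogonal to every $\sQ_m$ with $m\le D$, it vanishes identically on $V_{\ell,N}$. This proves the ``in particular'' assertion and shows $\sQ_n=\prod_{j}(x-j)\,R_m(x)$ with $\deg R_m=m$.

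For (ii) I would identify $R_m$ by a direct hypergeometric computation. The natural tool is an appropriate Thomae--Whipple transformation of the terminating ${}_3F_2(1)$ in \eqref{eq:HahnQN}, chosen so that the $x$-dependence is expelled into a Pochhammer prefactor of length $D+1$ equal to $\prod_{j=N-\ell_2}^{\ell_1\wedge N}(x-j)=(-1)^{D+1}(N-\ell_2-x)_{D+1}$, while the transformed series becomes a ${}_3F_2(1)$ carrying a numerator parameter $-m$. Carrying this out at the negative-integer values $a=-\ell_1-1$, $b=-\ell_2-1$ and matching the three surviving parameters against \eqref{eq:HahnQN} should yield exactly $\sQ_m(x;N-\ell_2-1,|N-\ell_1|-1,\ell_1\vee N)$ after separating the cases $\ell_1\le N$ and $\ell_1>N$, which account for the absolute value $|N-\ell_1|$ and the maximum $\ell_1\vee N$ in the statement. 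The constant $\tfrac{(-N+\ell_2+1)_m}{(-\ell_1\wedge N)_n}$ is then fixed by comparing leading coefficients: the coefficient of $x^n$ in $\sQ_n$ equals $\tfrac{(n-\ell_1-\ell_2-1)_n}{(-\ell_1)_n(-N)_n}$, the product $\prod_j(x-j)$ is monic, and the coefficient of $x^m$ in the shifted $\sQ_m$ is given by the same formula with the primed parameters.

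The main obstacle is precisely step (ii): producing the correct transformation and controlling the negative-integer specialization. Setting $a,b$ to negative integers turns the lower parameter $-\ell_1$ (and, after the transformation, further parameters) into negative integers, so several Pochhammer ratios become $0/0$ and must be resolved as limits; tracking which terms survive is delicate, and the bookkeeping genuinely differs in the regimes $\ell_1\le N$ and $\ell_1>N$. Once the surviving ${}_3F_2(1)$ is pinned down as a shifted Hahn polynomial, the remaining verification---degree count, leading coefficient, and the elementary factorial identity $(-m)_k=(-1)^k m!/(m-k)!$ already used in the Lemma---is routine.
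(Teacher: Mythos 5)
Your step (i) is sound, and it is a genuinely different route to the \emph{final} assertion of the theorem: the paper obtains the vanishing of $\sQ_n$ on $V_{\ell,N}$ as a corollary of the explicit factorization \eqref{eq:Qfactor1}, whereas you obtain it directly by analytically continuing the orthogonality relation \eqref{eq:orthoHahnQ} to $a=-\ell_1-1$, $b=-\ell_2-1$ (legitimate for all $m\ne n$ with $m,n\le \ell_1\wedge N$, since no denominator vanishes there) and then using that $\sQ_0,\dots,\sQ_{\deg_{\ell,N}}$, having nonzero norms, span the $(\deg_{\ell,N}+1)$-dimensional space of functions on the support $V_{\ell,N}$. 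This also yields that $\prod_{j=N-\ell_2}^{\ell_1\wedge N}(x-j)$ divides $\sQ_n$.

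However, the main content of Theorem \ref{thm:factor1D} is the identity \eqref{eq:Qfactor1} itself --- the precise cofactor $\sQ_m(x; N-\ell_2-1,|N-\ell_1|-1,\ell_1\vee N)$ and the precise constant --- and this you do not prove. Your step (ii) is a plan, not an argument: an ``appropriate Thomae--Whipple transformation'' which ``should yield exactly'' the claimed cofactor, and you yourself flag this step as the main obstacle. That unexecuted step \emph{is} the paper's proof: for $\ell_1\le N$ it applies two specific transformations, \cite[Entries (7.4.4.83) and (7.4.4.86)]{PBM}, with carefully chosen parameters, splits the resulting Pochhammer prefactor into the linear factors times a residual factor, identifies the surviving ${}_3F_2$ as $\sQ_m(N-x;\cdot)$ and reverses it with \eqref{eq:HahnQrev}; the case $\ell_1>N$ is then reduced to the case $\ell_1\le N$ via the symmetry \eqref{eq:sQHahn2}. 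Moreover, the one concrete device you propose for finishing step (ii), fixing the constant by comparing coefficients of $x^n$, genuinely fails: with negative integer parameters the leading coefficient $(n-\ell_1-\ell_2-1)_n/\big((-\ell_1)_n(-N)_n\big)$ of $\sQ_n$ can vanish inside the range of the theorem (degree reduction). For instance, for $(\ell_1,\ell_2,N)=(6,2,7)$ and $n=5$ (i.e.\ $m=3$) one has $(n-\ell_1-\ell_2-1)_n=(-4)_5=0$, and the cofactor $\sQ_3(x;4,0,7)$ likewise has vanishing leading coefficient, so the comparison reads $0=c\cdot 0$ and determines nothing; this also falsifies your opening claim that both sides have degree $n$. (Evaluating at $x=0$, where $\sQ_n(0)=1$, would fix the constant instead --- but only after the proportionality of the cofactor to the shifted Hahn polynomial has been established, which is exactly the missing step.)
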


\begin{proof}
The assumption on $m$ implies $n \le  \ell_1 \wedge N$, which implies that the constant in front of $\sQ_m$ 
in the righthand side of \eqref{eq:Qfactor1} is nonzero. 

First assume $\ell_1 \le N$. We need the following identity in \cite[Entry (7.4.4.83)]{PBM},
$$
{}_3 F_2 \left( \begin{matrix} -n, a,b\\ c, d \end{matrix}; 1\right)
    = \frac{(c+d-a-b)_n}{(c)_n}{}_3 F_2 \left( \begin{matrix} -n, d-a,d-b\\ d, c+d-a-b \end{matrix}; 1\right),
$$
where $n$ is a positive integer and the identity holds when both sides are finite. Choose 
$a=n-\ell_1-\ell_2 -1$, $b=-x$, $c=-\ell_1$ and $d= -N$. Then for our choice of $n =  \ell_1+\ell_2 - N + m +1$
we obtain
$$
 \sQ_n(x; \ell_1,\ell_2, N) = \frac{(-\ell_1-m+x)_n}{(-\ell_1)_n}
        {}_3 F_2 \left( \begin{matrix} -n, -m, -N+x \\ -\ell_1-m+x, -N \end{matrix}; 1\right).
$$
We write $(-\ell_1-m+x)_{\ell_1+\ell_2 - N + m +1} = (-\ell_1-m+x)_m (-\ell_1+x)_{\ell_1+\ell_2 - N +1}$. While
the second term is $(-\ell_1+x)_{\ell_1+\ell_2-N+1} =  \prod_{j=N-\ell_2}^{\ell_1} (x-j)$, the first term
combining with the $ {}_3 F_2 $ function gives, using the identity \cite[Entry (7.4.4.86)]{PBM}
$$
\frac{(c)_m}{(c-a)_m} {}_3 F_2 \left( \begin{matrix} -m, a,b\\ c, d \end{matrix}; 1\right)
     =   {}_3 F_2 \left( \begin{matrix} -m, a, d-b\\ d, a-c-m+1 \end{matrix}; 1\right)
$$
with $a = - N+x$, $b=-n$, $c= -\ell_1 - m +x$, and $d = -N$, that 
$$
\frac{(-\ell_1-m+x)_m}{(N-\ell_1-m)_m}
 {}_3 F_2 \left( \begin{matrix} -n, -m, -N+x \\ -\ell_1-m+x, -N \end{matrix}; 1\right)
 =   {}_3 F_2 \left( \begin{matrix}  -m, -N+n, -N+x \\ -N+\ell_1+1, -N \end{matrix}; 1\right).
$$
This last function can be identified with $\sQ_m(N-x; N-\ell_1-1, N-\ell_2-1, N)$, which we further write, using
the identity \eqref{eq:HahnQrev}, as
$$
 (-1)^m \frac{(-N+\ell_2+1)_m}{(-N+\ell_1+1)_m} \sQ_m(x; N-\ell_2-1, N-\ell_1-1, N).
$$
Putting these together proves the identity \eqref{eq:Qfactor1} when $\ell_1 \le N$. 

Next we assume $\ell_1 > N$. Exchanging the role of $\ell_1$ and $N$ in the ${}_3F_2$ of \eqref{eq:HahnQN},
we see that the Hahn polynomials satisfy
\begin{equation} \label{eq:sQHahn2}
      \sQ_n(x;\ell_1,\ell_2,N) =  \sQ_n(x; N,\ell_1+\ell_2-N, \ell_1).
\end{equation}
With $\wh \ell_1 = N$, $\wh \ell_2 =  \ell_1+\ell_2 - N$ and $\wh N = \ell_1$, the polynomial in the righthand
side is $\sQ_n(x; \wh \ell_1, \wh \ell_2, \wh N)$ with $\wh \ell_1 \le \wh N$ and $\wh \ell_1 + \wh \ell_2 \ge \wh N$,
which is factorable by what we proved in the previous paragraph. Hence, \eqref{eq:Qfactor1} for $\ell_1 > N$ 
follows from that for $\ell_1 \le N$. The proof is complete. 
\end{proof}

By symmetry, one may expect a factorization of $\sQ_n$ when $\ell_2 >  N \ge \ell_1$. Indeed, this
holds for the polynomial $\sQ_n(N-x; \ell_2,\ell_1,N)$, which is well defined for $n \le \ell_2$ and the factorization
holds for $n > \ell_1+\ell_2 -N$. However, by \eqref{eq:HahnQrev}, 
$$
    \sQ_n(x;\ell_1,\ell_2,N)  = (-1)^n \frac{(-\ell_2)_n}{(-\ell_1)_n} \sQ_n(N-x; \ell_2,\ell_1,N).
$$
The constant in the right-hand side makes sense only if $n \le \ell_1 < \ell_2$. Thus, it may seem that applying 
first \eqref{eq:HahnQrev} and then Theorem \ref{thm:factor1D} can lead to interesting new factorizations, 
but Theorem Theorem \ref{thm:factor1D} cannot be applied.

The Hahn polynomial $\sQ_m(x, N-\ell_2-1,| N-\ell_1| -1, \ell_1\vee N)$ in the righthand side of 
\eqref{eq:Qfactor1} still has negative integer parameters. Experiment with small $\ell_i$ and $N$ shows 
that many such polynomials are irreducible. One may ask if these polynomials are irreducible or 
irreducible after further factoring out the linear terms. The answer to both questions, however, is negative. 

\begin{exam}
If $(\ell_1,\ell_2,N) = (6,8,12)$, then \eqref{eq:Qfactor1} becomes 
$$
   \sQ_6(x;6,8,12) = - \frac{1}{5!}(x-4)(x-5)(x-6)\sQ_3(x;3,5,12) 
$$
and the Hahn polynomial in the righthand side contains a further linear factor, 
$$
 \sQ_3(x;3,5,12)=- \frac{1}{132}(x-4) (2 x^2- 13 x +33). 
$$ 
Furthermore, if $(\ell_1,\ell_2,N) = (8,9,16)$, then \eqref{eq:Qfactor1} becomes 
$$
   \sQ_7(x;8,9,16) = \frac{1}{56}(x-7)(x-8)\sQ_5(x;6,7,16) 
$$
and the Hahn polynomial in the righthand side is not irreducible, 
$$
\sQ_5(x;6,7,16) =  -\frac{1}{24960} (52 - 14 x + x^2) (-480 + 159 x - 20 x^2 + x^3).
$$
\end{exam}

\subsection{Generating function} 
The generating function \eqref{eq:generatingHahn} can also be adopted for the Hahn polynomials
of negative integer parameters. When $a$ and $b$ are negative integers, the Jacobi polynomial 
$P_n^{(a,b)}$ are known to have a degree reduction for some $n$. Indeed, if $n$ is a positive 
integer and $n+ m+ a+b =0$, $m$ is an integer, $1 \le m \le n$, then \cite[(4.22.3)]{Sz}
$$
   \binom{n}{m-1} P_n^{(a,b)}(t) = \binom{n+a}{n-m+1} P_{m-1}^{(a,b)}(t). 
$$
However, we claim that this degree reduction will be irrelevant in our setting. Indeed, for $\ell_1$ and $\ell_2$ 
satisfy \eqref{eq:ell-N}, we are interested in $n$ that satisfies $0 \le n \le \deg_{\ell,N}$. With $a = -\ell_1-1$, 
$b = -\ell_2-1$, we have $m= -n-a-b = \ell_1 +\ell_2 + 2 -n \ge  \ell_1 +\ell_2 + 2 -\deg_{\ell,N}$. If 
$N \le \ell_2$, then $\deg_{\ell,N} = \ell_1\wedge N$, so tha $m \ge  \ell_1+N+2-\ell_1\wedge N \ge N+2 > n$,
whereas if $\ell_2 \le N$, then $\deg_{\ell,N} = \ell_1\wedge N - (N-\ell_2)$, so that 
$m \ge \ell_1+N+2-\ell_1\wedge N \ge N+2 >n$. This verifies the claim. To simplify notation, 
we make the following definition.

\begin{defn}
Let $\ell_1,\ell_2$ be positive integers. For $n =0,1,\ldots,  \ell_1 \wedge \ell_2$, we define 
$$
  G_n^{(\ell_1,\ell_2)}(t) = \frac{P_n^{(-\ell_1-1,-\ell_2-1)}(t)}{P_n^{(-\ell_1-1,-\ell_2-1)}(1)} = 
      {}_2F_1\left( \begin{matrix} -n, n-\ell_1-\ell_2-1 \\
          -\ell_1 \end{matrix};  \frac{1-t}{2}\right). 
$$
\end{defn}

Since $P_n^{(-\ell_1-1,-\ell_2-1)}(1)= \frac{(-\ell_1)_n}{n!}$ and $P_n^{(a,b)}(-t)= (-1)^n P_n^{(b,a)}(t)$, 
we have
\begin{equation}\label{eq:Gn(-t)}
G_n^{(\ell_1,\ell_2)}(t)  = (-1)^n \frac{(-\ell_2)_n}{(-\ell_1)_n} G_n^{(\ell_2,\ell_1)}(-t).
\end{equation}

By analytic continuation, the identity \eqref{eq:generatingHahn} remains valid when 
$a= - \ell_1 -1$ and $b= -\ell_2-1$, which gives 
\begin{equation} \label{eq:genHahnQ1}
  (1+t)^N  G_n^{(\ell_1,\ell_2)}\left(\frac{1-t}{1+t}\right) 
             = \sum_{x =0}^N \binom{N}{x} \sQ_n(x; \ell_1, \ell_2, N) t^x. 
\end{equation}
This works for all $\ell_1 + \ell_2 \ge N$. Its right-hand side, however, sums over all integers in $[0,N]$
instead of over integers in $[N- \ell_2 \wedge N, \ell_1\wedge N]$, on which the orthogonality of 
$\sQ_n(x; \ell_1, \ell_2, N)$ is defined. A more general generating function is given below. 

\begin{prop} \label{prop:GenF1d}
Let $\a = \ell_1+\ell_2\wedge N - \ell_1\wedge N$ and $\b =  \ell_2+\ell_1\wedge N - \ell_2\wedge N$. Then
for $0 \le n \le \deg_{\ell,N}$, 
 \begin{align} \label{eq:generatingHahnQ}
  b_{\ell,N}t^{N-\ell_2\wedge N} & (1+t)^{\ell_1\wedge N+\ell_2\wedge N - N}  G_n^{(\a,\b)} \left(\frac{1-t}{1+t}\right) \\
            & = \sum_{x =N-\ell_2 \wedge N}^{\ell_1\wedge N} \binom{\ell_1\wedge N+\ell_2\wedge N - N}{\ell_1 \wedge N - x} \sQ_n(x; \ell_1, \ell_2, N) t^x, \notag
\end{align}
where $b_{\ell,N}$ is the constant given by 
$$
  b_{\ell,N} = \frac{(-\ell_2)_n (-\ell_1-\ell_2\wedge N + N)_n}{(-\ell_1)_n (-\ell_2+\ell_2\wedge N - N)_n}.
$$ 
In particular, $b_{\ell,N} =1$ if $\ell_2 \ge N$. 
\end{prop}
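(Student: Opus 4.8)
The plan is to reduce \eqref{eq:generatingHahnQ} to the elementary generating function \eqref{eq:genHahnQ1}, but applied to the \emph{shifted, lower} triple of parameters $(\a,\b,M)$ with $M=\deg_{\ell,N}=\ell_1\wedge N+\ell_2\wedge N-N$, for which $\a+\b=\ell_1+\ell_2$ and $V_{\ell,N}=\{x:N-\ell_2\wedge N\le x\le \ell_1\wedge N\}$ has exactly $M+1$ points. The key is the reduction identity
\[
 \sQ_n(x;\ell_1,\ell_2,N)=b_{\ell,N}\,\sQ_n\big(x-(N-\ell_2\wedge N);\a,\b,M\big),\qquad 0\le n\le M,
\]
which says that, on its actual support, $\sQ_n(\cdot;\ell_1,\ell_2,N)$ is (up to the constant $b_{\ell,N}$) a Hahn polynomial whose third parameter $N$ has collapsed to $M$. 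Once this is in hand, substituting it into \eqref{eq:genHahnQ1} written for $(\a,\b,M)$ and reindexing yields \eqref{eq:generatingHahnQ} at once.

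To prove the reduction identity I would argue by uniqueness of orthogonal polynomials rather than by hypergeometric manipulation. Under the shift $y=x-(N-\ell_2\wedge N)$ the set $V_{\ell,N}$ becomes $\{0,1,\dots,M\}$, and I claim the weight $\sH_{\ell,N}$ is an $x$-independent multiple of $\sH_{(\a,\b),M}$ on this set. Both are positive on the full interval: for $\sH_{(\a,\b),M}$ one checks $\a,\b\ge M$, so by the Lemma its support is all of $[0,M]$ and $\deg_{(\a,\b),M}=M$, whence $\sQ_n(\cdot;\a,\b,M)$ is the genuine $n$-th orthogonal polynomial for every $n\le M$. It therefore suffices to compare the Pearson ratios. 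Using the product form in \eqref{eq:H-weight}, a direct computation gives
\[
 \frac{\sH_{\ell,N}(x+1)}{\sH_{\ell,N}(x)}=\frac{(\ell_1-x)(N-x)}{(x+1)\,(x+1-(N-\ell_2))},
\]
together with the analogous formula for $\sH_{(\a,\b),M}$; after substituting $x=y+(N-\ell_2\wedge N)$ the two rational functions of $y$ must coincide. Matching the numerators reduces to the identity $(\ell_1+q-p)(p+q-N)=(\ell_1-N+q)q$ and matching the denominators to $(q-\ell_2)(q-N)=0$, where $p=\ell_1\wedge N$ and $q=\ell_2\wedge N$; both hold because $p\in\{\ell_1,N\}$ and $q\in\{\ell_2,N\}$, which is precisely what makes the argument work uniformly in all four sign cases. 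Hence the two weights are proportional, the two families of orthogonal polynomials agree up to a scalar, and comparing leading coefficients—where the common factor $(n-\ell_1-\ell_2-1)_n$ cancels and one is left with $(-\a)_n(-M)_n/[(-\ell_1)_n(-N)_n]$—identifies the scalar as exactly $b_{\ell,N}$ in every case (and as $1$ when $\ell_2\ge N$, since then $q=N$ collapses the Pochhammers).

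With the reduction identity established, I would apply \eqref{eq:genHahnQ1} to $(\a,\b,M)$; this is legitimate because $\a+\b\ge M$, because $0\le n\le M\le\a\wedge\b$ so that $G_n^{(\a,\b)}$ is defined, and because the Jacobi degree-reduction is irrelevant by the same counting as in the paragraph preceding the definition of $G_n$. Writing out \eqref{eq:genHahnQ1} for $(\a,\b,M)$, replacing $\sQ_n(y;\a,\b,M)$ by $b_{\ell,N}^{-1}\sQ_n(y+N-\ell_2\wedge N;\ell_1,\ell_2,N)$, multiplying by $b_{\ell,N}\,t^{N-\ell_2\wedge N}$, and setting $x=y+(N-\ell_2\wedge N)$ turns the sum over $y\in[0,M]$ into a sum over $x\in V_{\ell,N}$; the binomial becomes $\binom{M}{y}=\binom{M}{M-y}=\binom{\ell_1\wedge N+\ell_2\wedge N-N}{\ell_1\wedge N-x}$, which is exactly the coefficient appearing in \eqref{eq:generatingHahnQ}. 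This reproduces the claimed identity, and the case $b_{\ell,N}=1$ for $\ell_2\ge N$ is read off from the final formula for the scalar.

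The main obstacle is the reduction identity, and within it the uniform treatment of the four cases coming from $\ell_1\lessgtr N$ and $\ell_2\lessgtr N$. The orthogonality route sidesteps a long chain of ${}_3F_2$ transformations, but it still hinges on the two small algebraic cancellations above, which silently encode the min/max bookkeeping; the cleanest check is to verify them through $p=\ell_1\wedge N\in\{\ell_1,N\}$ and $q=\ell_2\wedge N\in\{\ell_2,N\}$ rather than by splitting into cases. One could instead prove the reduction identity directly by chaining the transformations \cite[(7.4.4.83)]{PBM} and \cite[(7.4.4.86)]{PBM} used in the proof of Theorem~\ref{thm:factor1D}, after first using \eqref{eq:sQHahn2} to normalize to $\ell_1\le N$, but that path is longer and far less transparent.
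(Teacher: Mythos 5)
Your proposal is correct, and it reaches \eqref{eq:generatingHahnQ} by a genuinely different route for the key step. The paper's own proof is a four-way case analysis on $\ell_1\lessgtr N$, $\ell_2\lessgtr N$: in each case it transforms $\sQ_n(x;\ell_1,\ell_2,N)$ by chaining the hypergeometric symmetries \eqref{eq:sQHahn2} and \eqref{eq:sQHahn3} (and the reflection \eqref{eq:Gn(-t)} for $G_n$), then reindexes the sum in \eqref{eq:genHahnQ1} by hand for each case. Your reduction identity $\sQ_n(x;\ell_1,\ell_2,N)=b_{\ell,N}\,\sQ_n(x-(N-\ell_2\wedge N);\a,\b,M)$ packages all four of these case-by-case transformations into a single statement (indeed, one can check it is equivalent, case by case, to the compositions of \eqref{eq:sQHahn2} and \eqref{eq:sQHahn3} the paper uses), and your proof of it — proportionality of the two weights via equality of Pearson ratios, positive-definiteness on the $M+1$ points of $V_{\ell,N}$, uniqueness of orthogonal polynomials of exact degree $n\le M$, and identification of the scalar as the ratio $(-\a)_n(-M)_n/[(-\ell_1)_n(-N)_n]$ of leading coefficients — is sound: I verified that the two algebraic identities you isolate do reduce to $(p-N)(\ell_1-p)=0$ and $(q-N)(\ell_2-q)=0$, that $\a,\b\ge M$ so $\deg_{(\a,\b),M}=M$ and the shifted support is all of $\{0,\dots,M\}$, and that the leading-coefficient ratio equals $b_{\ell,N}$ in all four cases (a check you assert but do not write out; it is routine). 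What your approach buys is uniformity and transparency: the min/max bookkeeping is confined to two one-line identities, the constant $b_{\ell,N}$ acquires a meaning (ratio of leading coefficients), and no ${}_3F_2$ transformation formulas are needed beyond the orthogonality theorem already established by analytic continuation. What the paper's approach buys is that it stays entirely at the level of formal hypergeometric identities — it never invokes positivity, uniqueness, or exactness of degree — and it produces along the way the explicit intermediate identities (such as $\sQ_n(x;\ell_1,\ell_2,N)=(-1)^n\frac{(N-\ell_1-\ell_2)_n}{(-N)_n}\sQ_n(\ell_1-x;\ell_1,\ell_2,\ell_1+\ell_2-N)$) that are of independent use elsewhere in the paper.
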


\begin{proof}
The identity \eqref{eq:genHahnQ1} gives \eqref{eq:generatingHahnQ} when $\ell_1 \ge N$ and $\ell_2 \ge N$. 
In all other cases, we need to modify the righthand side so that the summation is over the integers in the 
interval $[N-\ell_2 \wedge N, \ell_1\wedge N]$. Assume $\ell_2 \ge N$. 
The identity \eqref{eq:genHahnQ1} for $\sQ_n$ in the right-hand side of \eqref{eq:sQHahn2} gives 
\eqref{eq:generatingHahnQ} for $\ell_1< N$. 

Next we assume $\ell_2 \le N$. By \eqref{eq:HahnQrev}, 
\begin{equation} \label{eq:sQHahn3}
  \sQ_n(x,\ell_1,\ell_2,N) =  (-1)^n \frac{(-\ell_2)_n}{(-\ell_1)_n} \sQ_n(N-x, \ell_2, \ell_1, N).
\end{equation}
Applying \eqref{eq:sQHahn2} on the Hahn polynomial in the right hand side, we deduce
$$
 \sQ_n(x,\ell_1,\ell_2,N) = (-1)^n \frac{(-\ell_2)_n}{(-\ell_1)_n} \sQ_n(N-x, N, \ell_1+\ell_2 - N, \ell_2).
$$ 
Using this identity, the right-hand side of \eqref{eq:generatingHahnQ} for $\ell_1 \ge N$ becomes
\begin{align*}
 \sum_{x=N-\ell_2}^N & \binom{\ell_2}{N-x}\sQ_n(x;\ell_1,\ell_2,N)t^x =
     \sum_{x= 0}^{\ell_2} \binom{\ell_2}{x}\sQ_n(N-x;\ell_1,\ell_2,N)t^{N-x}\\
  & = (-1)^n \frac{(-\ell_2)_n}{(-\ell_1)_n}\sum_{x= 0}^{\ell_2} \binom{\ell_2}{x}\sQ_n(x;N,\ell_1+\ell_2-N,\ell_2)t^{N-x} \\
  & = (-1)^n \frac{(-\ell_2)_n}{(-\ell_1)_n} t^N (1+\tfrac1t)^{\ell_2} 
        G_n^{(N,\ell_1+\ell_2-N)}\left(\frac{1- \frac1t}{1+\frac1t}\right) \\
  & = (-1)^n \frac{(-\ell_2)_n}{(-\ell_1)_n} t^{N-\ell_2} (1+t)^{\ell_2} 
           G_n^{(N, \ell_1+\ell_2-N)}\left(- \frac{1-t}{1+t}\right),   
\end{align*}
where the third equality follows from \eqref{eq:genHahnQ1}. Finally, applying \eqref{eq:Gn(-t)}, we have
established \eqref{eq:generatingHahnQ} for $\ell_1 \ge N$ and $\ell_2 \le N$. In the last case, when
$\ell_1 \le N$ and $\ell_2 \le N$, we rewrite the Hahn polynomial, starting from \eqref{eq:sQHahn2},
using \eqref{eq:sQHahn3} and then \eqref{eq:sQHahn2} one more time, to obtain
\begin{align*}
 \sQ_n(x;\ell_1,\ell_2,N)& = (-1)^n \frac{(N-\ell_1-\ell_2)_n}{(-N)_n} \sQ_n(\ell_1-x; \ell_1+\ell_2 - N, N,\ell_1)\\
         & = (-1)^n \frac{(N-\ell_1-\ell_2)_n}{(-N)_n} \sQ_n(\ell_1-x; \ell_1,\ell_2, \ell_1+\ell_2-N).
\end{align*} 
With this identity, the righthand side of \eqref{eq:generatingHahnQ} for $\ell_1 \le N$ becomes
\begin{align*}
& \sum_{x=N-\ell_2}^{\ell_1}  \binom{\ell_1+\ell_2-N}{\ell_1-x}\sQ_n(x;\ell_1,\ell_2,N)t^x \\
  & \quad=
     \sum_{x= 0}^{\ell_1+\ell_2-N} \binom{\ell_1+\ell_2-N}{x} \sQ_n(\ell_1-x;\ell_1,\ell_2,N)t^{\ell_1-x}\\
  & \quad= (-1)^n \frac{(N-\ell_1-\ell_2)_n}{(-N)_n} \sum_{x= 0}^{\ell_1+\ell_2-N} \binom{\ell_1+\ell_2-N}{x}
         \sQ_n(x; \ell_1,\ell_2, \ell_1+\ell_2-N)t^{\ell_1-x},
\end{align*}
from which the proof can be completed as in the case $\ell_1 \ge N$ and $\ell_2 \le N$. 
This completes the proof.
\end{proof}

The usual orthogonality of the Jacobi polynomials, however, no longer holds when the parameters are 
negative integers, since $(1-x)^a(1+x)^b$ is not integrable on $[-1,1]$ if $a$ and/or $b$ are negative 
integers. 
It is possible, however, to define a linear functional $\CL$, so that the polynomials $G_n^{(\ell_1,\ell_2)}$ 
are orthogonal in the sense that $\CL\big( G_m^{(\ell_1,\ell_2)} G_n^{(\ell_1,\ell_2)}\big) =0$ for $m \ne n$,
although this linear function is no longer positive definite. 

Let $\ell_1$ and $\ell_2$ be positive integers. We define the linear functional $\CL$
on the space $\Pi_{\ell_1+\ell_2}$ of polynomials of degree at most $\ell_1+\ell_2$ 
so that its moments are given by
$$
  \CL(x^k) = {}_2F_1\left( \begin{matrix} - k, -\ell_1\\ -\ell_1-\ell_2 \end{matrix}; 2\right), \qquad 0 \le k \le \ell_1+\ell_2.
$$

\begin{prop}
Let $\ell_1$ and $\ell_2$ be two positive integer. Then the polynomials $G_k^{(\ell_1, \ell_2)}$ satisfy the orthogonal relation 
$$
  \CL\left(G_m^{(\ell_1, \ell_2)} G_n^{(\ell_1, \ell_2)}\right) =  h_n^{(\ell_1,\ell_2)}\delta_{m,n}, 
  \qquad 0\le m, n \le \ell_1 \wedge \ell_2, 
$$
where the constants $h_n^{(\ell_1,\ell_2)}$ are given by
$$
h_n^{(\ell_1,\ell_2)} = \frac{n! (-\ell_2)_n (1+\ell_1+\ell_2 -n)}{(-\ell_1)_n (-\ell_1-\ell_2)_n (1+\ell_1+\ell_2 - 2n)}.
$$
\end{prop}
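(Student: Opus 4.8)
The plan is to reduce the statement to the classical orthogonality of Jacobi polynomials by an affine change of variable and then pass to negative integer parameters by analytic continuation. First I would set $u = \tfrac{1-x}{2}$ and record that in this variable $G_n^{(\ell_1,\ell_2)}$ is exactly ${}_2F_1(-n,n-\ell_1-\ell_2-1;-\ell_1;u)$, i.e. the (shifted) Jacobi polynomial attached to $\alpha=-\ell_1-1$, $\beta=-\ell_2-1$, normalized at $u=0$. The first genuine computation is to show that $\CL$ is transported to the beta/Jacobi functional $\wt\CL$ with moments $\wt\CL(u^m)=\frac{(-\ell_1)_m}{(-\ell_1-\ell_2)_m}=\frac{(\alpha+1)_m}{(\alpha+\beta+2)_m}$. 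I would prove this by expanding $\big(\tfrac{1-x}2\big)^m$ binomially, inserting the defining series $\CL(x^i)={}_2F_1(-i,-\ell_1;-\ell_1-\ell_2;2)$, and interchanging the two finite sums; the inner alternating sum $\sum_i\binom{m}{i}(-1)^i(-i)_j$ is an $m$-th finite difference of the degree-$j$ polynomial $(-i)_j$ in $i$, hence vanishes unless $j=m$, collapsing the double sum to the single term $\frac{(-\ell_1)_m}{(-\ell_1-\ell_2)_m}$.

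With $\wt\CL$ identified as the analytic continuation in $(\alpha,\beta)$ of the normalized beta functional $p\mapsto B(\alpha+1,\beta+1)^{-1}\int_0^1 p(u)\,u^\alpha(1-u)^\beta\,\d u$, I would establish orthogonality by proving $\wt\CL(u^k G_n)=0$ for $0\le k<n$. The key observation is that $\wt\CL(u^kG_n)$ is a rational function of $(\alpha,\beta)$: using the moments and the coefficients of the ${}_2F_1$, it equals $\sum_{j=0}^n\frac{(-n)_j(n+\alpha+\beta+1)_j}{j!}\frac{(\alpha+1+j)_k}{(\alpha+\beta+2)_{j+k}}$. For $\alpha,\beta>-1$ this rational function coincides with the honest integral $B^{-1}\int_0^1 u^kG_n\,u^\alpha(1-u)^\beta\,\d u$, which is $0$ by classical Jacobi orthogonality. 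A rational function vanishing on the open set $\{\alpha,\beta>-1\}$ vanishes wherever its denominator is nonzero; here the only denominator factors are $\alpha+\beta+2+i$ with $0\le i\le j+k-1\le \ell_1+\ell_2-1$, and these stay nonzero at $\alpha+\beta+2=-\ell_1-\ell_2$, so the identity survives at the target parameters. Expanding $G_m$ in powers $u^k$ with $k\le m<n$ then gives $\wt\CL(G_mG_n)=0$ for $m\ne n$, and transporting back through $u=\tfrac{1-x}2$ yields $\CL\big(G_m^{(\ell_1,\ell_2)}G_n^{(\ell_1,\ell_2)}\big)=0$.

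For the diagonal value I would apply the same continuation principle to the classical normalized Jacobi $L^2$-norm. Converting the textbook norm to $[0,1]$ and dividing by $B(\alpha+1,\beta+1)$ gives $\wt\CL(G_n^2)=\frac{n!\,(\beta+1)_n(\alpha+\beta+1)}{(\alpha+1)_n(\alpha+\beta+1)_n(2n+\alpha+\beta+1)}$, a rational function that agrees with $\wt\CL(G_n^2)$ for $\alpha,\beta>-1$ and hence holds at all parameters where its denominator does not vanish. At $\alpha=-\ell_1-1$, $\beta=-\ell_2-1$ one has $(\alpha+1)_n=(-\ell_1)_n$, $(\alpha+\beta+1)_n=(-\ell_1-\ell_2-1)_n$ and $2n+\alpha+\beta+1=2n-\ell_1-\ell_2-1$, all nonzero precisely because $n\le\ell_1\wedge\ell_2$; substituting and simplifying with the elementary identity $\frac{\ell_1+\ell_2+1}{(-\ell_1-\ell_2-1)_n}=\frac{\ell_1+\ell_2+1-n}{(-\ell_1-\ell_2)_n}$ produces the stated $h_n^{(\ell_1,\ell_2)}$.

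The step needing the most care is the legitimacy of the continuation at the negative integer parameters, that is, confirming that the $G_n$ do not undergo the degree reduction noted earlier and that no denominator in the relevant rational functions vanishes. Both reduce to the single hypothesis $n\le\ell_1\wedge\ell_2$: it keeps $(-\ell_1)_n$ and $(-\ell_2)_n$ nonzero, forces $\ell_1+\ell_2+2-n>n$ so that no Jacobi degree reduction occurs, and keeps every Pochhammer and linear denominator above away from zero within the degree range $\le\ell_1+\ell_2$ on which $\CL$ is defined. Once this bookkeeping is settled, each assertion is an equality of rational functions determined on an open set, so the conclusion follows automatically.
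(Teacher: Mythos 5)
Your proof is correct, but its core runs along a genuinely different route from the paper's. The two arguments share the first step: you and the paper both establish the moment formula $\CL\bigl(\bigl(\tfrac{1-x}{2}\bigr)^m\bigr)=\frac{(-\ell_1)_m}{(-\ell_1-\ell_2)_m}$ by binomial expansion and the collapse of the inner alternating sum (your finite-difference phrasing and the paper's rewriting of $\sum_{k=j}^m(-m)_k(-k)_j/k!$ are the same computation). After that the paper stays entirely at the negative-integer parameters: it evaluates $\CL\bigl(G_n^{(\ell_1,\ell_2)}(x)\bigl(\tfrac{1-x}{2}\bigr)^m\bigr)$ directly from the moments, recognizes the result as a balanced terminating ${}_3F_2$, and applies the Saalsch\"utz summation formula, which produces the Kronecker delta and, after multiplying by the leading coefficient of $G_n^{(\ell_1,\ell_2)}$, the constant $h_n^{(\ell_1,\ell_2)}$ in one stroke. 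You instead identify the transported functional $\wt\CL$ as the rational continuation in $(\a,\b)$ of the normalized beta functional, import both the orthogonality and the norm from classical Jacobi theory on the open set $\{\a>-1,\,\b>-1\}$, and transfer them to $\a=-\ell_1-1$, $\b=-\ell_2-1$ by the principle that a rational function vanishing on an open set vanishes wherever its denominators do not. That continuation is the step needing care, and your bookkeeping is right: every Pochhammer denominator that occurs is of the form $(\a+\b+2)_{j+k}$ with $j+k\le 2(\ell_1\wedge\ell_2)\le\ell_1+\ell_2$, hence nonzero at the target point, and $n\le\ell_1\wedge\ell_2$ keeps $(-\ell_1)_n$, $(-\ell_1-\ell_2-1)_n$ and $2n-\ell_1-\ell_2-1$ away from zero in the continued norm formula, whose simplification does yield the stated $h_n^{(\ell_1,\ell_2)}$. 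As for what each approach buys: the paper's is self-contained and purely algebraic, needing only one classical summation identity and no limiting argument, and it yields the explicit intermediate evaluation $\CL\bigl(G_n^{(\ell_1,\ell_2)}\bigl(\tfrac{1-x}{2}\bigr)^m\bigr)=\frac{(-1)^n n!(-\ell_2)_n}{(-\ell_1-\ell_2)_{2n}}\delta_{m,n}$; yours avoids Saalsch\"utz altogether, makes it conceptually transparent that $h_n^{(\ell_1,\ell_2)}$ is simply the classical normalized Jacobi norm with the parameters substituted, and is in the same spirit as the analytic-continuation arguments the paper itself uses elsewhere (for instance for the discrete orthogonality in Section 2), at the price of the denominator bookkeeping that a one-line summation formula renders unnecessary.
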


\begin{proof}
First we claim that the following relation holds
\begin{equation}\label{eq:moments}
 \CL \left( \big( \tfrac{1-x}{2}\big)^m\right )  = \frac{(-\ell_1)_m}{(- \ell_1-\ell_2)_m}, \qquad m =0,1, \ldots, \ell_1+\ell_2.
\end{equation}
Indeed, by the binomial formula and the definition of the ${}_2F_1$, we have 
\begin{align*}
\CL \left( \big( \tfrac{1-x}{2}\big)^m\right ) \, &=  \frac{1}{2^m} \sum_{k=0}^m \binom{m}{k}(-1)^k \CL(x^k) \\
      & = \frac{1}{2^m} \sum_{k=0}^m \frac{(-m)_k}{k!} \sum_{j=0}^k  \frac{(-k)_j(-\ell_1)_j} { j! (-\ell_1-\ell_2)_j} 2^j \\
      & = \frac{1}{2^m} \sum_{j=0}^m  \frac{(-\ell_1)_j 2^j }{ j! (-\ell_1-\ell_2)_j} \sum_{k=j}^m  \frac{(-m)_k (-k)_j}{k!}.
\end{align*}
Changing summation index and rewriting, it is easy to see that 
$$
\sum_{k=j}^m  \frac{(-m)_k (-k)_j}{k!} = (-1)^j(-m)_j \sum_{k=0}^{m-j} \frac{(-m+j)_k}{k!} =  m!\delta_{j,m}, 
$$
from which the claimed formula \eqref{eq:moments} follows immediately. 

Now let $n \le \ell_1\wedge \ell_2$. For $0 \le m \le n$, we use \eqref{eq:moments} and the identity
$(a)_{m+k} = (a)_m (a+m)_k$ to obtain 
\begin{align*}
  \CL \big(G_n^{(\ell_1,\ell_2)}(x) \big(\tfrac{1-x}{2}\big)^m\big) & = 
   \sum_{k=0}^n  \frac{(-n)_k (n-\ell_1-\ell_2-1)_k}{k! (-\ell_1)_k} \CL\big((\tfrac{1-x}{2})^{m+k}\big) \\
      & = \frac{(-\ell_1)_m}{(-\ell_1-\ell_2)_m}
     \sum_{k=0}^n  \frac{(-n)_k (n-\ell_1-\ell_2-1)_k(-\ell_1+m)_k}{k! (-\ell_1)_k(- \ell_1-\ell_2+m)_k}  \\
      & = \frac{(-\ell_1)_m}{(-\ell_1-\ell_2)_m} 
       {}_3F_2\left( \begin{matrix} - n, n-\ell_1 - \ell_2 -1, -\ell_1+m \\ -\ell_1, -\ell_1-\ell_2+m \end{matrix}; 1\right).
\end{align*}
This hypergeometric function is a balanced terminating ${}_3F_2$ and, by Saalsch\"utz summation formula, 
we conclude that 
\begin{align*}
  \CL \big(G_n^{(\ell_1,\ell_2)}(x) \big(\tfrac{1-x}{2}\big)^m\big) \, & = \frac{(-\ell_1)_m}{(-\ell_1-\ell_2)_m} 
      \frac{(1+\ell_2-n)_n (-m)_n}{ (-\ell_1)_n (1+\ell_1+\ell_2-n-m)_n} \\
   & = \frac{(-1)^n n! (-\ell_2)_n}{(-\ell_1-\ell_2)_{2n}}\delta_{m,n},
\end{align*}
where we have used $(-m)_n =0$ for $m < n$ and $(-n)_n =(-1)^n n!$. This proves that $G_n^{(\ell_1,\ell_2)}$ is orthogonal to $(1-x)^m$ for $m < n$ and hence, by linearity, to $G_m^{(\ell_1,\ell_2)}$.  Furthermore, multiplying the above identity by 
the coefficient $(-1)^n(n-\ell_1-\ell_2-1)_n /(-\ell_1)_n$ of $ \big(\tfrac{1-x}{2}\big)^n$ in $G_n^{(\ell_1,\ell_2)}$ verifies the 
formula for $h_n^{(\ell_1,\ell_2)}$. 
\end{proof}

Since $\sh_n^{(\ell_1,\ell_2)}$ has the sign $(-1)^n$, the moment functional $\CL$ is not positive definite. 
This can also be seen in the coefficients of the three-term relations satisfied by $t G_n^{(\ell_1,\ell_2)}(t)$,
where the coefficients of $ G_{n+1}^{(\ell_1,\ell_2)}$ and $G_{n-1}^{(\ell_1,\ell_2)}$ have opposite signs. 

The generating function \eqref{eq:generatingHahnQ} of the Hahn polynomials of negative parameters requires
$0\le n \le \deg_{\ell,N} = \ell_1\wedge N +\ell_2 \wedge N -N$. Since $\deg_{\ell,N} \le \ell_1\wedge \ell_2$, we see that all $G_n^{(\ell_1,\ell_2)}$ in the 
generating function \eqref{eq:generatingHahnQ} are orthogonal polynomials. 

Finally, let us mention that the three-term relation satisfied by $\sQ_n(\cdot,\ell_1,\ell_2,N)$ can be deduced
from that of the classical Hahn polynomials, which shows 
\begin{equation} \label{eq:3-term}
  x \phi_n  = - A_n \phi_{n+1} + (A_n+C_n) \phi_n  -  C_n \phi_{n-1}, \quad 0 \le n \le \deg_{\ell,N}-1,
\end{equation}
where $\phi_n = \sQ_n(\cdot,\ell_1,\ell_2,N)$ and the coefficients $A_n$ and $C_n$ are given by
\begin{align*}
  A_n \, & = \frac{(n -\ell_1-\ell_2-1) (n -\ell_1) (N - n)}{(2 n -\ell_1-\ell_2- 1) (2 n -\ell_1-\ell_2)}, \\
  C_n\, & = \frac{n (n + N -\ell_1-\ell_2-1) (n -\ell_2-1)}{(2 n -\ell_1-\ell_2- 1) (2 n -\ell_1-\ell_2-2)}.
\end{align*}
In our statement of \eqref{eq:3-term}, we assume $n \le \deg_{\ell,N} -1$. For $\ell_2 \le N$, the identity
also holds for $n=\deg_{\ell,N}$ by Theorem \ref{thm:factor1D} whenever $A_n$ and $C_n$ are finite. 
Since $\deg_{\ell,N} \le \frac{\ell_1+\ell_2}{2}$, the coefficients are well defined unless $n = \deg_{\ell,N}
=\frac{\ell_1+\ell_2}2$. This last equation is attained if $\ell_1=\ell_2=N$ and $n = \deg_{\ell,N} = N$, which 
leads to a pole in $A_n$ so that \eqref{eq:3-term} fails for $n = \deg_{\ell,N}$ in this particular case. 

\section{Hahn polynomials for hypergeometric distribution}
\setcounter{equation}{0}

Classical Hahn polynomials in several variables are those on lattice points inside a simplex. A brief 
review of these polynomials will be given in the first subsection. When their parameters become negative
integers, these polynomials become orthogonal polynomials for hypergeometric distribution, which will
be discussed in the second subsection. 

\subsection{Classical Hahn polynomials of several variables}
Let $N$ be a positive integer. Recall that $V_N^d$ is the set of lattice points in a discrete simplex
\begin{equation}\label{eq:VNd}
   V_N^d: = \{\nu \in \NN_0^d: |\nu| \le N\},
\end{equation}
and, for $\k \in \RR^{d+1}$ with $\k_i > -1$, $1 \le i \le d+1$, the function $W_{\k,N}$ defined in
\eqref{eq:HK-weight} is the normalized Hahn weigh function.  
The Hahn polynomials are orthogonal with respect to the discrete inner product  
\begin{equation*}
     \la f, g\ra_{W_{\k,N}} = \sum_{x \in V_N^d} f(x) g(x)W_{\k,N}(x).
\end{equation*}
For $0\le n \le N$, let $\CV_n^d(W_{\k,N})$ denote the space of orthogonal polynomials of degree $n$ 
with respect to this inner product. Then 
$$
   \dim \CV_n^d\left(W_{\k,N}\right) = \binom{n+d-1}{n}, \qquad n =0,1,2,\ldots. 
$$
An orthogonal basis of $\CV_n^d(W_{\k,N})$ can be given in terms of 
classical Hahn polynomials of one variable, for which we need the following notation:
 
For $y=(y_1,\ldots, y_{d}) 
\in \RR^{d}$ and $1 \le j \le d$, we define 
\begin{equation}\label{xsupj}
    \yb_j := (y_1, \ldots, y_j) \quad \hbox{and}\quad \yb^j := (y_j, \ldots, y_d), 
\end{equation}
and also define $\yb_0 := 0$ and $\yb^{d+1} := 0$. It follows that $\yb_d = \yb^1 = y$, 
and 
$$
   |\yb_j| = y_1 + \cdots + y_j,   \quad |\yb^j| = y_j + \cdots + y_d, \quad\hbox{and}\quad
   |\yb_0| = |\yb^{d+1}| = 0.
$$
For $\k = (\k_1,\ldots, \k_{d+1})$, we have $\bka^j := (\k_j, \ldots, \k_{d+1})$ for $1 \le j \le d+1$. Furthermore,
for $\nu \in \NN_0^d$ and $\k \in \RR^{d+1}$, we define
\begin{equation}\label{eq:aj}
   a_j:=a_j(\kappa,\nu):=|\bka^{j+1}| + 2 |\nu^{j+1}| + d-j, \qquad 1 \le j \le d.
\end{equation} 
Notice that $a_{d} = \k_{d+1}$ since $|\nu^{d+1}| =0$ by definition. 

\begin{prop} \label{prop:Hahn1}
For $x \in \ZZ_N^{d+1}$ and $\nu \in \NN_0^d$, $|\nu| \le N$, define
\begin{align}\label{eq:Hn-pos}
  Q_\nu(x;\kappa, N) =&  \prod_{j=1}^d  (-N+|\xb_{j-1}|+|\nu^{j+1}|)_{\nu_j}  \\ 
    &\qquad  \times  
    Q_{\nu_j}(x_j; \kappa_j, a_j, N- |\xb_{j-1}|-|\nu^{j+1}|). \notag
\end{align}
The polynomials in $\{Q_\nu(x; \kappa,N): |\nu| = n\}$ form a mutually orthogonal basis of $\CV_n^d(W_{\k,N})$ 
and $B_\nu := \la Q_\nu(\cdot; \kappa,N), \ Q_\nu(\cdot; \kappa,N) \ra_{W_{\k,N}}$ is given by, setting 
$\l_\k : = |\k|+d+1$, 
\begin{align*} 
 B_\nu(\k, N)  :=\frac{(-1)^{|\nu|}(-N)_{|\nu|}(\l_k)_{N+|\nu|}}
   { (\l_k)_N (\l_k)_{2|\nu|}} 
   \prod_{j=1}^d \frac{ (a_j+1)_{\nu_j}(\k_j+a_j+1)_{2\nu_j}  \nu_j! }
    { (\k_j+1)_{\nu_j} (\kappa_j+a_j + 1)_{\nu_j}}.
\end{align*}
\end{prop}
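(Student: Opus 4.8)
The plan is to prove both assertions by induction on the dimension $d$, peeling off the last variable $x_d$ and reducing to a Hahn system in $x_1,\ldots,x_{d-1}$ with shifted parameters. The base case $d=1$ is exactly the one-variable orthogonality \eqref{eq:orthoHahnQ}. The inductive step rests on two structural observations. First, fixing $x_1,\ldots,x_{d-1}$ and writing $M=N-|\xb_{d-1}|$, the weight \eqref{eq:HK-weight} factors as
\[
   \prod_{i=1}^{d-1}\frac{(\k_i+1)_{x_i}}{x_i!}\cdot\frac{(\k_d+1)_{x_d}}{x_d!}\frac{(\k_{d+1}+1)_{M-x_d}}{(M-x_d)!},
\]
whose $x_d$-dependent part is a one-variable Hahn weight $w_{\k_d,\k_{d+1}}(\cdot;M)$ up to the factor $(\k_d+\k_{d+1}+2)_M/M!$. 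Second, since $|\nu^{d+1}|=0$ and $a_d=\k_{d+1}$, the $j=d$ factor of \eqref{eq:Hn-pos} is precisely $(-M)_{\nu_d}Q_{\nu_d}(x_d;\k_d,\k_{d+1},M)$, while the factors with $j<d$ do not involve $x_d$ at all.

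First I would carry out the summation over $x_d$. By the two observations it isolates the one-variable inner product, and \eqref{eq:orthoHahnQ} produces $\delta_{\nu_d,\mu_d}$ times an explicit function of $M$ (and of $\nu_d$). The crux is then to show that this residual $M$-dependence reassembles the correct lower-dimensional weight. Using $(-M)_{\nu_d}=(-1)^{\nu_d}M!/(M-\nu_d)!$, the concatenation $(\k_d+\k_{d+1}+2)_M(\k_d+\k_{d+1}+M+2)_{\nu_d}=(\k_d+\k_{d+1}+2)_{M+\nu_d}$, and the split $(\k_d+\k_{d+1}+2)_{M+\nu_d}=(\k_d+\k_{d+1}+2)_{2\nu_d}(\k_d+\k_{d+1}+2\nu_d+2)_{M-\nu_d}$, I would collapse the $M$-dependent factors to a constant in $x$ times
\[
   \frac{(a_{d-1}+1)_{N'-|\xb_{d-1}|}}{(N'-|\xb_{d-1}|)!}, \qquad N'=N-\nu_d,
\]
where $a_{d-1}=\k_d+\k_{d+1}+2\nu_d+1$ by \eqref{eq:aj}. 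This is exactly the last block of the $(d-1)$-variable Hahn weight $W_{\k',N'}$ with $\k'=(\k_1,\ldots,\k_{d-1},a_{d-1})$.

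Next I would check that the reduction is consistent on the polynomial side. Writing $\nu'=(\nu_1,\ldots,\nu_{d-1})$, the substitutions $N\mapsto N-\nu_d$ and $|\nu^{j+1}|\mapsto|\nu^{j+1}|-\nu_d$ leave every prefactor $(-N+|\xb_{j-1}|+|\nu^{j+1}|)_{\nu_j}$ and every argument $N-|\xb_{j-1}|-|\nu^{j+1}|$ in \eqref{eq:Hn-pos} unchanged for $j<d$, and a direct computation from \eqref{eq:aj} gives $a_j'(\k',\nu')=a_j$ for $1\le j\le d-1$; hence the remaining factors of $Q_\nu$ are exactly $Q_{\nu'}(\cdot;\k',N')$. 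Orthogonality for $\nu\ne\mu$ now follows: either $\nu_d\ne\mu_d$, killed by the factor $\delta_{\nu_d,\mu_d}$, or $\nu_d=\mu_d$ and $\nu'\ne\mu'$, killed by the inductive hypothesis applied to $W_{\k',N'}$.

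Finally, running the same reduction with $\nu=\mu$ turns the norm into a recursion $B_\nu(\k,N)=R_d\cdot B_{\nu'}(\k',N')$, where $R_d$ gathers the one-variable norm from \eqref{eq:orthoHahnQ}, the factor $(-M)_{\nu_d}^2$, and the ratio of the normalizing constants of $W_{\k,N}$ and $W_{\k',N'}$; I would verify that $R_d$ equals the $j=d$ term of the claimed product together with the part of the global prefactor indexed by $\nu_d$, so that iterating telescopes to the stated closed form. For the basis statement, note that the Pochhammer prefactor in each block of \eqref{eq:Hn-pos} cancels the $N$-dependent denominators of the corresponding ${}_3F_2$, making that block a genuine polynomial of degree $\nu_j$, so $Q_\nu$ has total degree $|\nu|$ with nonvanishing leading term; since $\k_i>-1$ makes $\langle\cdot,\cdot\rangle_{W_{\k,N}}$ positive definite, the mutually orthogonal family $\{Q_\nu:|\nu|=n\}$ is linearly independent, and matching the count $\binom{n+d-1}{n}=\dim\CV_n^d(W_{\k,N})$ identifies it as an orthogonal basis. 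I expect the main obstacle to be the third step—simultaneously verifying that the residual $M$-factors rebuild $W_{\k',N'}$ and that the parameters $a_j$ are invariant under the reduction—everything else being either the one-variable result or a degree-and-dimension count.
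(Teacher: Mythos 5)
Your proposal takes a genuinely different route from the paper, for the simple reason that the paper does not prove Proposition~\ref{prop:Hahn1} at all: it recalls the result from Karlin--McGregor \cite{KM}, where (as in \cite{IX17,X15}, on which the later sections rely) the orthogonality comes from the generating function \eqref{Hahngenfunc} realizing the Hahn polynomials as coefficients of Jacobi polynomials on the simplex. Your slice-by-slice induction is more elementary and self-contained, using only the one-variable relation \eqref{eq:orthoHahnQ} plus Pochhammer bookkeeping, and the bookkeeping does go through: the $j=d$ block is $(-M)_{\nu_d}Q_{\nu_d}(x_d;\k_d,\k_{d+1},M)$ with $M=N-|\xb_{d-1}|$; the residual $M$-dependence reassembles into $(\k_d+\k_{d+1}+2)_{2\nu_d}\,(a_{d-1}+1)_{M-\nu_d}/(M-\nu_d)!$, the last block of $W_{\k',N'}$; the invariance $a_j(\k',\nu')=a_j(\k,\nu)$ for $j<d$ holds because $|(\bka')^{j+1}|=|\bka^{j+1}|+2\nu_d+1$ and $2|(\bnu')^{j+1}|=2|\bnu^{j+1}|-2\nu_d$ compensate the drop $d\mapsto d-1$; and the norm recursion telescopes to the stated $B_\nu(\k,N)$ via $(-N)_{\nu_d}(-N+\nu_d)_{|\nu|-\nu_d}=(-N)_{|\nu|}$ and $(\l_\k)_{N+|\nu|}/(\l_\k)_{2|\nu|}=(\l_\k+2|\nu|)_{N-|\nu|}$. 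What the generating-function route buys instead is exactly the machinery the paper needs afterwards (Section 5): kernels, Poisson-type sums, and painless analytic continuation to negative integer parameters; your argument buys a proof that never leaves the discrete setting.

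Two steps need tightening. First, you invoke \eqref{eq:orthoHahnQ} on every slice $\xb_{d-1}$, but that relation is stated only for degrees $\le M$, and slices with $M=N-|\xb_{d-1}|<\max(\nu_d,\mu_d)$ do occur in the sum over $V_N^d$. On those slices you must argue directly: writing
\[
(-M)_{\nu_d}Q_{\nu_d}(x_d;\k_d,\k_{d+1},M)=\sum_{k=0}^{\nu_d}\frac{(-\nu_d)_k(\nu_d+\k_d+\k_{d+1}+1)_k(-x_d)_k}{(\k_d+1)_k\,k!}\,(-M+k)_{\nu_d-k},
\]
every surviving term (those with $k\le x_d\le M$) contains a vanishing factor $(-M+k)_{\nu_d-k}$ once $\nu_d>M$, so these slices contribute $0$ --- consistent with your convention $1/(M-\nu_d)!=0$ on the weight side, but not an instance of \eqref{eq:orthoHahnQ}. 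Second, your closing inference (mutually orthogonal, independent, of cardinality $\dim\CV_n^d(W_{\k,N})$, hence a basis of $\CV_n^d(W_{\k,N})$) is incomplete as written: membership in $\CV_n^d(W_{\k,N})$ requires orthogonality to \emph{all} polynomials of degree $<n$, which independence and a count do not give. Your induction does supply the missing ingredient, since it proves $\la Q_\nu,Q_\mu\ra_{W_{\k,N}}=0$ for every pair $\nu\ne\mu$ with $|\nu|,|\mu|\le N$, and the $\binom{m+d}{d}$ independent polynomials $\{Q_\mu:|\mu|\le m\}$ then span $\Pi_m^d$ for each $m\le N$; hence each $Q_\nu$ with $|\nu|=n$ is orthogonal to $\Pi_{n-1}^d$. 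Make that step explicit and the proof is complete.
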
 

These Hahn polynomials of several variables were defined and studied in \cite{KM} through 
a generating function that was later recognized as the Jacobi polynomial $P_\nu(x)$ on the 
simplex $T^d$ defined by
$$
T^d: = \{x \in \RR^d: x_1 \ge 0,\ldots, x_d \ge 0, |x|\le 1\}.
$$ 

\begin{prop}
For $\k \in \RR^{d+1}$ with $\k_i > -1$, $\nu \in \NN_0^d$ and $x \in \RR^d$, define
\begin{equation}\label{eq:Pnu}
P_\nu(x) =P_\nu^\kappa (x) := \prod_{j=1}^d \left(1-|\xb_{j-1}| \right)^{\nu_j}  \frac{P_{\nu_j}^{(a_j,\kappa_j)}\left 
  (\frac{2x_j}{1-|\xb_{j-1}|} -1\right)}{P_{\nu_j}^{(a_j,\kappa_j)}(1)}, 
\end{equation}
where $a_j = a_j(\k, \nu)$ is defined in \eqref{eq:aj}. Then the polynomials in $\{P_\nu:|\nu|=n\}$ form an 
orthogonal basis of $\CV_n^d(W_\k)$, where
\begin{equation}\label{eq:weightW}
  W_\kappa (x):=  x_1^{\kappa_1} \cdots x_d^{\kappa_d} (1-|x|)^{\kappa_{d+1}}.
\end{equation}
\end{prop}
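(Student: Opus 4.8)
The plan is to collapse the integral over the simplex $T^d$ defining $\la\cdot,\cdot\ra_{W_\k}$ into a product of one--dimensional Jacobi orthogonality integrals, matched to the product structure of \eqref{eq:Pnu}. First I would introduce coordinates $s_1,\dots,s_d\in[0,1]$ by $x_j=(1-|\xb_{j-1}|)s_j$, $1\le j\le d$. Since $|\xb_0|=0$ this gives $x_1=s_1$, while $1-|\xb_j|=(1-|\xb_{j-1}|)(1-s_j)$ yields inductively $1-|\xb_{j-1}|=\prod_{i<j}(1-s_i)$ and $x_j=s_j\prod_{i<j}(1-s_i)$. This maps the cube $[0,1]^d$ onto $T^d$, has lower--triangular Jacobian $\prod_{j=1}^d(1-|\xb_{j-1}|)=\prod_{i=1}^{d-1}(1-s_i)^{d-i}$, and, crucially, satisfies $\frac{2x_j}{1-|\xb_{j-1}|}-1=2s_j-1$, so that in the new variables $P_\nu(x)=\prod_{j=1}^d(1-|\xb_{j-1}|)^{\nu_j}\,\frac{P_{\nu_j}^{(a_j,\k_j)}(2s_j-1)}{P_{\nu_j}^{(a_j,\k_j)}(1)}$.

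The heart of the proof, and the step I expect to require the most care, is to verify that after this substitution the integrand of $\la P_\nu,P_\mu\ra_{W_\k}=\int_{T^d}P_\nu P_\mu W_\k\,\d x$ factors as $\prod_{i=1}^d g_i(s_i)$. This is exponent bookkeeping: using $(1-|\xb_{j-1}|)^{\nu_j}=\prod_{i<j}(1-s_i)^{\nu_j}$, $x_j^{\k_j}=s_j^{\k_j}\prod_{i<j}(1-s_i)^{\k_j}$, and $(1-|x|)^{\k_{d+1}}=\prod_i(1-s_i)^{\k_{d+1}}$, one collects the contributions of $P_\nu$, $P_\mu$, $W_\k$ and the Jacobian to find that $s_i$ occurs to the power $\k_i$ and $1-s_i$ to the power $e_i:=|\nu^{i+1}|+|\mu^{i+1}|+|\bka^{i+1}|+d-i$, giving
\[
g_i(s_i)=s_i^{\k_i}(1-s_i)^{e_i}\,P_{\nu_i}^{(a_i(\k,\nu),\k_i)}(2s_i-1)\,P_{\mu_i}^{(a_i(\k,\mu),\k_i)}(2s_i-1).
\]
Now suppose $\nu\ne\mu$ and let $k$ be the largest index with $\nu_k\ne\mu_k$; then $\nu^{k+1}=\mu^{k+1}$, so the two Jacobi factors in $g_k$ share the parameter $a_k(\k,\nu)=a_k(\k,\mu)=:a_k$, and $e_k=2|\nu^{k+1}|+|\bka^{k+1}|+d-k$ equals $a_k$ by \eqref{eq:aj}. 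Hence, up to a positive constant and the change $t=2s_k-1$, the $s_k$--factor equals $\int_{-1}^1(1-t)^{a_k}(1+t)^{\k_k}P_{\nu_k}^{(a_k,\k_k)}(t)P_{\mu_k}^{(a_k,\k_k)}(t)\,\d t$, a classical Jacobi orthogonality integral that vanishes because $\nu_k\ne\mu_k$ and $a_k,\k_k>-1$. Since the whole integral is the product of the factor integrals, $\la P_\nu,P_\mu\ra_{W_\k}=0$ for every $\nu\ne\mu$; for $\nu=\mu$ the same factorization writes $\la P_\nu,P_\nu\ra_{W_\k}$ as a product of positive Jacobi norms, so $P_\nu\ne0$.

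Finally I would assemble the basis statement. Expanding $P_{\nu_j}^{(a_j,\k_j)}$ in powers of its argument rewrites $(1-|\xb_{j-1}|)^{\nu_j}P_{\nu_j}^{(a_j,\k_j)}\left(\frac{2x_j}{1-|\xb_{j-1}|}-1\right)$ as a combination of the polynomials $(2x_j-1+|\xb_{j-1}|)^m(1-|\xb_{j-1}|)^{\nu_j-m}$, $0\le m\le\nu_j$, each of degree $\nu_j$ in $x_1,\dots,x_j$; thus $P_\nu$ is a genuine polynomial with $\deg P_\nu\le|\nu|$. Because $W_\k$ is positive and integrable on $T^d$ when $\k_i>-1$, the bilinear form $\la\cdot,\cdot\ra_{W_\k}$ is a bona fide inner product, so the mutually orthogonal nonzero family $\{P_\nu:|\nu|\le n\}$ is linearly independent; having $\binom{n+d}{d}=\dim\Pi_n^d$ members of degree $\le n$, it is a basis of $\Pi_n^d$. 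In particular $\{P_\mu:|\mu|\le n-1\}$ spans $\Pi_{n-1}^d$, so for $|\nu|=n$ the orthogonality relations give $P_\nu\perp\Pi_{n-1}^d$, i.e.\ $P_\nu\in\CV_n^d(W_\k)$ and $\deg P_\nu=n$. As the $\binom{n+d-1}{n}$ mutually orthogonal polynomials $\{P_\nu:|\nu|=n\}$ exhaust the dimension of $\CV_n^d(W_\k)$, they form an orthogonal basis.
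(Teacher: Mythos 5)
Your proof is correct, but there is nothing in the paper to compare it against: the paper states this proposition without proof, recalling it as a classical fact about Jacobi polynomials on the simplex (the polynomials trace back to the generating function of \cite{KM}, and the orthogonality is standard in the literature on orthogonal polynomials of several variables). What you have written is essentially the standard textbook proof of that classical fact, and it is sound: the telescoping substitution $x_j=(1-|\xb_{j-1}|)s_j$ does map $[0,1]^d$ onto $T^d$ with the Jacobian you state, your exponent bookkeeping giving $e_i=|\bnu^{i+1}|+|\bmu^{i+1}|+|\bka^{i+1}|+d-i$ is right, and the key observation — choosing $k$ to be the \emph{largest} index where $\nu$ and $\mu$ differ, so that $a_k(\k,\nu)=a_k(\k,\mu)$ and $e_k=a_k$ exactly matches the Jacobi weight $(1-t)^{a_k}(1+t)^{\k_k}$ — is precisely what makes the one-dimensional orthogonality applicable. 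Two small points worth making explicit: the Jacobi orthogonality needs both parameters $>-1$, and indeed $a_k=|\bka^{k+1}|+2|\bnu^{k+1}|+d-k>-(d+1-k)+d-k=-1$ since each $\k_i>-1$ (the same bound gives $e_i>-1$, so every factor integral is finite and the product factorization is legitimate); and the claim $\deg P_\nu=n$ at the end deserves its one-line justification — if $\deg P_\nu<n$ then $P_\nu\in\Pi_{n-1}^d$ would be orthogonal to itself, contradicting $\la P_\nu,P_\nu\ra_{W_\k}>0$. With those remarks your argument is complete and self-contained, which is arguably more than the paper offers for this statement.
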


These polynomials serve as generating functions of the Hahn polynomials, for which it is more convenient to
use a different normalization of the Hahn polynomials, denoted by $H_{\nu}(\cdot; \k, N)$, given by 
\begin{equation} \label{eq:HahnH}
 H_{\nu}(\a; \k, N) =  \frac{(-1)^{|\nu|}}{(-N)_{|\nu|}} \prod_{j=1}^d  \frac{(\kappa_j+1)_{\nu_j}}{(a_j+1)_{\nu_j}}  \, 
          Q_\nu(\a'; \kappa,N), 
\end{equation}
where we use homogeneous coordinates $\a = (\a',N-|\a|) \in \NN_0^{d+1}$. 

\begin{thm} \label{def:Hahn}
Let $\k\in\RR^{d+1}$ with $\kappa_i>-1$ and $N\in\NN$. For $\nu\in\NN_0^d$, $|\nu|\le N$,  
the Hahn polynomials $\sQ_\nu(\a; \k ,N)$ satisfy  
\begin{equation}\label{Hahngenfunc}
 P_{\nu,N}(y) = |y|^N P_\nu\Big ( \f {y'} {|y|} \Big)
   =      \sum_{|\alpha| = N} \frac{N!}{\a!} H_\nu(\alpha; \kappa,N)y^\alpha, 
\end{equation}
where $\a \in \NN_0^{d+1}$, $\a! = \a_1!\cdots \a_{d+1}!$ and $y = (y', y_{d+1}) \in \RR^{d+1}$. 
\end{thm}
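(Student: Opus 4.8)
The plan is to prove \eqref{Hahngenfunc} by induction on the dimension $d$, peeling off the first variable $y_1$ at each step and applying the one-variable generating function \eqref{eq:generatingHahn} together with the inductive hypothesis in dimension $d-1$. The base case $d=1$ is exactly \eqref{eq:generatingHahn}, rewritten in the homogeneous variables $(y_1,y_2)$ via $t=y_2/y_1$, once the prefactors in \eqref{eq:HahnH} and \eqref{eq:Hn-pos} are matched using the reflection \eqref{eq:HahnQrev}.

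The first step is a recursive factorization of $P_\nu$ in \eqref{eq:Pnu}. Since \eqref{eq:Pnu} is a product over $j$, I split off the $j=1$ factor, which is $P_{\nu_1}^{(a_1,\k_1)}(2x_1-1)/P_{\nu_1}^{(a_1,\k_1)}(1)$ because $|\xb_0|=0$, and substitute $\wt x_i:=x_{i+1}/(1-x_1)$. The remaining factors then collapse to $(1-x_1)^{|\nu^2|}\,P_{\wt\nu}^{\wt\k}(\wt x)$ with $\wt\k=(\k_2,\dots,\k_{d+1})$ and $\wt\nu=(\nu_2,\dots,\nu_d)$. The only point to verify here is that the lower-dimensional parameters $a_i(\wt\k,\wt\nu)$ coincide with $a_{i+1}(\k,\nu)$; this is immediate from \eqref{eq:aj}, since $a_j$ depends on $\nu$ only through the tail $|\nu^{j+1}|$, which is unchanged by deleting the first coordinate.

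Next I homogenize. With $s_2:=y_2+\cdots+y_{d+1}=|\wt y|$ and $\wt y=(y_2,\dots,y_{d+1})$, the substitution $x=y'/|y|$ sends $2x_1-1$ to $(y_1-s_2)/(y_1+s_2)$, $1-x_1$ to $s_2/|y|$, and $\wt x$ to $\wt y'/|\wt y|$, giving
$$P_{\nu,N}(y)=|y|^{N-|\nu^2|}\,\frac{P_{\nu_1}^{(a_1,\k_1)}\big(\tfrac{y_1-s_2}{y_1+s_2}\big)}{P_{\nu_1}^{(a_1,\k_1)}(1)}\,s_2^{|\nu^2|}\,P_{\wt\nu}^{\wt\k}(\wt y'/|\wt y|).$$
I apply \eqref{eq:generatingHahn} to the first factor in its homogeneous form $(u+v)^M P_n^{(a,b)}(\tfrac{u-v}{u+v})/P_n^{(a,b)}(1)=\sum_x\binom{M}{x}Q_n(x;a,b,M)v^x u^{M-x}$, with $u=y_1$, $v=s_2$ and degree $M_1=N-|\nu^2|$. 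Writing $\alpha_1=M_1-\beta$ for the exponent of $y_1$, each term then carries $s_2^{\beta+|\nu^2|}P_{\wt\nu}^{\wt\k}(\wt y'/|\wt y|)=P_{\wt\nu,\wt N}(\wt y)$ with $\wt N=\beta+|\nu^2|=N-\alpha_1$, to which the inductive hypothesis applies since $|\wt\nu|=|\nu^2|\le\wt N$.

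The final and most delicate step is to collapse the resulting double sum into $\sum_{|\alpha|=N}\tfrac{N!}{\alpha!}H_\nu(\alpha;\k,N)y^\alpha$ and check the coefficients. Setting $\alpha=(\alpha_1,\wt\alpha)$, the constraint $|\wt\alpha|=\wt N=N-\alpha_1$ is precisely $|\alpha|=N$, so the index sets agree. For the coefficients I would factor $H_\nu$ through \eqref{eq:HahnH} and \eqref{eq:Hn-pos}: using $\wt M_i=M_{i+1}$ for the degree parameters $M_j=N-|\xb_{j-1}|-|\nu^{j+1}|$, the factors with $j\ge2$ reassemble into $H_{\wt\nu}(\wt\alpha;\wt\k,\wt N)$, leaving a single one-variable factor $Q_{\nu_1}(\alpha_1;\k_1,a_1,M_1)$ in the reversed parameter order. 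The reflection \eqref{eq:HahnQrev} converts it into $Q_{\nu_1}(\beta;a_1,\k_1,M_1)$ — exactly what \eqref{eq:generatingHahn} produced — and the accompanying ratio of Pochhammer prefactors cancels. What remains is the elementary identity $\tfrac{N!}{\alpha!}(-M_1)_{\nu_1}\tfrac{(-\wt N)_{|\nu^2|}}{(-N)_{|\nu|}}=\binom{M_1}{\beta}\tfrac{\wt N!}{\wt\alpha!}$, which, via $(-m)_k=(-1)^km!/(m-k)!$ and $M_1-\nu_1=N-|\nu|$, $\wt N-|\nu^2|=\beta$, reduces to $\binom{M_1}{\alpha_1}=\binom{M_1}{\beta}$. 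The real work is thus not conceptual but the careful bookkeeping of these prefactors and of the parameter reversal; that is where I expect the main obstacle to lie.
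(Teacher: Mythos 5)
The paper itself states Theorem~\ref{def:Hahn} without proof --- it is background recalled from \cite{KM} and \cite{IX17,X15} --- so your proposal can only be judged on its own merits. Its skeleton is sound, and I checked the computational claims: the recursive factorization of \eqref{eq:Pnu} (with the key observation $a_i(\wt\k,\wt\nu)=a_{i+1}(\k,\nu)$, which is correct since $a_j$ depends only on tails), the homogenization, the homogeneous form of \eqref{eq:generatingHahn}, the use of \eqref{eq:HahnQrev} to reverse the parameter order, and the closing Pochhammer identity (which does reduce to $\binom{M_1}{\alpha_1}=\binom{M_1}{\beta}$ with $M_1=N-|\bnu^2|$, $\beta=M_1-\alpha_1$) are all correct.

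The genuine gap is the sentence ``the index sets agree.'' They do not. Your double sum only produces exponents with $0\le\alpha_1\le M_1=N-|\bnu^{2}|$: the one-variable generating function at level $j=1$ has degree $M_1$, and the inductive hypothesis is invoked with $\wt N=N-\alpha_1\ge|\bnu^2|$. The theorem, however, sums over \emph{all} $\alpha\in\NN_0^{d+1}$ with $|\alpha|=N$, i.e.\ $0\le\alpha_1\le N$; whenever $|\bnu^2|>0$ these sets differ. So your induction proves the identity with the sum restricted to $\alpha_1\le N-|\bnu^2|$, and to obtain the stated theorem you must additionally prove that $H_\nu(\alpha;\k,N)=0$ whenever $|\alpha|=N$ and $\alpha_1>N-|\bnu^2|$. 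This vanishing is genuinely part of the theorem's content (your own factorization shows $P_{\nu,N}$ has degree at most $N-|\bnu^2|$ in $y_1$, so the theorem is consistent only because those coefficients vanish), and it cannot be absorbed into your coefficient identity: for $\alpha_1>M_1$ the quantity $H_{\wt\nu}(\wt\alpha;\wt\k,\wt N)$ is not even defined, since $(-\wt N)_{|\bnu^2|}=0$. The missing lemma is true and provable from \eqref{eq:Hn-pos}: writing $M_j=N-(\alpha_1+\cdots+\alpha_{j-1})-|\bnu^{j+1}|$, the $j$-th factor of $Q_\nu(\alpha';\k,N)$ expands as
\begin{equation*}
(-M_j)_{\nu_j}\,Q_{\nu_j}(\alpha_j;\k_j,a_j,M_j)
=\sum_{k=0}^{\nu_j}\frac{(-\nu_j)_k(\nu_j+\k_j+a_j+1)_k(-\alpha_j)_k}{(\k_j+1)_k\,k!}\,(-M_j+k)_{\nu_j-k},
\end{equation*}
and every term here vanishes as soon as $M_j<\nu_j$ and $\alpha_j\le M_j$ (for $k\le M_j$ the last Pochhammer symbol contains the factor $0$; for $k>M_j\ge\alpha_j$ one has $(-\alpha_j)_k=0$). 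Such a $j\in\{2,\dots,d\}$ exists: the quantity $N-(\alpha_1+\cdots+\alpha_{j-1})-|\bnu^{j}|$ is negative at $j=2$ (this is exactly the hypothesis $\alpha_1>N-|\bnu^2|$) and equals $\alpha_{d+1}\ge0$ at $j=d+1$, so the largest $j$ at which it is negative works. Adding this lemma (or folding the vanishing statement into a strengthened induction hypothesis) completes your proof.
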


The generating function serves as a starting point and an essential tool in the study of \cite{IX17, X15}. 

\subsection{Hahn polynomials with negative integer parameters}
Let $N \in \NN_0$ and $\ell_i \in \NN$ for $1 \le i \le d+1$. We assume that they satisfy 
\begin{equation} \label{eq:l-condition}
  \ell_i \le N \quad \hbox{and}\quad \ell_i+ \ell_{j} \ge N, \quad i \ne j, \quad 1 \le i, j \le d+1. 
\end{equation}
Recall that $V_{\ell,N}^d$, defined in \eqref{eq:poly-domain}, denotes the discrete polyhedral domain, which
we restate below,
\begin{equation*}
   V_{\ell, N}^d: = \left \{x \in \NN_0^d: 0 \le x_i \le \ell_i, \, 1 \le i \le d, \, \hbox{and}\, N-\ell_{d+1} \le |x| \le N\right\},
\end{equation*}
where $|x|=x_1+\cdots +x_d$. Evidently, $V_{\ell,N}^d$ is the simplex $V_N^d$ if all $\ell_i = N$. The polyhedral
is the simplex $V_N^d$ with its corners sliced off. In particular,
$$
  |V_{\ell,N}^d| =\binom{N+d}{d}-\sum_{k=1}^{d+1}\binom{N-\ell_k+d-1}{d},
$$
where we denote by $|E|$ the cardinality of the discrete set $E$. For $\ell$ and $N$ satisfying 
\eqref{eq:l-condition}, the hypergeometric distribution in $d$ variables is defined by 
\eqref{eq:poly-weight}, which is restated below, 
\begin{equation*}
  \sH_{\ell,N}(x)  = \frac{1}{\binom{|\ell |}N} \prod_{i=1}^d \binom{\ell_i}{x_i}\binom{\ell_{d+1}}{N-|x|}
 =  \frac{N!}{(-|\ell |)_N}\prod_{i=1}^d \frac{(-\ell_i)_{x_i}}{x_i!} \frac{(-\ell_{d+1})_{N-|x|}}{(N-|x|)!}. 
\end{equation*}
This defines a probability measure on $V_{\ell,N}^d$. The first identity is used more commonly in the 
probability theory; see, for example, \cite{JKB}. The Hahn polynomials for this distribution are orthogonal 
polynomials with respect to the inner product 
\begin{equation} \label{eq:Hahn-ipd}
  \la f, g\ra_{\ell,N} =  \sum_{x \in V_{\ell,N}^d} f(x) g(x)  \sH_{\ell,N} (x).
\end{equation}
Let $\Pi_N^d$ denote the space of polynomials of total degree at most $N$ in $d$ variables. Let $\CI(V_{\ell,N}^d)$ 
denote the ideal of polynomials that vanish on $V_{\ell,N}^d$. It is known that the space of orthogonal polynomials,
denoted by $\Pi_{\ell,N}^d$, with respect to $\la \cdot,\cdot\ra_{\ell,N}$ satisfies 
$$
\Pi_{\ell,N}^d =  \RR[x_1,\ldots,x_d]/ \CI(V_{\ell,N}^d).
$$

Since $\sH_{\ell,N}(x) = W_{-\ell -\one,N}(x)$ where $\one = (1,1,\ldots,1)$ and $\sH_{\ell,N}(x) =0$ if 
$x \in V_N^d \setminus V_{\ell,N}^d$, orthogonal polynomials with respect to the inner product 
$\la \cdot,\cdot\ra_{\ell,N}$ can be deduced from $Q_\nu(x; \k, N)$ in \eqref{eq:Hn-pos} by setting 
$\k = -\ell -\one$ for $\nu$ in an appropriate subset of $\{\nu: |\nu| \le N\}$. This narrative was carried out
in \cite{IX20}. Let us now rewrite the polynomials $Q_\nu(x; -\ell -\one, N)$ in terms of the Hahn polynomials 
with negative integer parameters. 

With the notation $\bell^j := (\ell_j, \ldots, \ell_{d+1})$ for $1 \le j \le d+1$, we define
\begin{equation}\label{eq:saj}
   \za_j:= - a_j(-\ell - \one,\nu) -1 = |\bell^{j+1}| - 2 |\bnu^{j+1}|, \qquad 1 \le j \le d.
\end{equation}

\begin{defn}
Let $\ell \in \NN_0^{d+1}$ and $N \in \NN$. Assume $\ell_i \le N$ and \eqref{eq:l-condition}. 
We define Hahn polynomials with negative integer parameters on the polyhedron $V_{\ell,N}^d$ by 
\begin{align*} 
 \sQ_\nu(x;\ell, N)  \, & = Q_\nu(x; -\ell -\one, N) \\
  & = \prod_{j=1}^d 
     (-N+|\xb_{j-1}|+|\bnu^{j+1}|)_{\nu_j} \sQ_{\nu_j}\left(x_j;  \ell_j, \za_j, N- |\xb_{j-1}|-|\bnu^{j+1}|\right).
\end{align*}
\end{defn}

In \cite{IX20}, we considered these polynomials for $\nu$ in the index set $H_{\ell,N}^d$ defined by 
\begin{align*}
  H_{\ell,N}^d := \left \{   \nu \in \NN_0^d:   |\nu| \le N, \, |\nu| \le |\ell|-N, 
    \nu_j \le \ell_j, \,   \nu_j \le \za_j, \, 1 \le j \le d \right \}.
\end{align*} 
The main result of \cite{IX20} states that they form an orthogonal basis of the space $\Pi_{\ell,N}^d$. 

\begin{thm} \label{thm:OP-polyhedra}
Let $N \in \NN$ and let $\ell_i \in \NN$ satisfy \eqref{eq:l-condition}. Then 
\begin{enumerate}[\quad \rm (i)]
\item The polynomials $\sQ_\nu(\cdot; \ell,N)$ are orthogonal and satisfy 
$$
   \left \langle   \sQ_\nu(\cdot; \ell,N),   \sQ_\mu(\cdot; \ell,N)  \right \rangle_{\ell,N} 
       =   \sB_\nu(\ell, N) \delta_{\nu,\mu}
$$
for all $\nu,\mu \in H_{\ell,N}^d$, where 
\begin{align} \label{eq:Bnu}
  \sB_\nu(\ell, N)  :=\frac{(-1)^{|\nu|}(-N)_{|\nu|} (-|\ell |)_{N+|\nu|}} { (-|\ell|)_N (-|\ell|)_{2|\nu|}} 
   \prod_{j=1}^d \frac{ (- \za_j)_{\nu_j} (-\ell_j-\za_j-1)_{2\nu_j}  \nu_j! }
       {  (-\ell_j)_{\nu_j} (-\ell_j-\za_j-1)_{\nu_j}}.
\end{align}
\item The set $\{  \sQ_\nu(\cdot ; \ell ,N): \nu \in H_{\ell,N}^d \}$ is a basis of $\Pi_{\ell,N}^d$. In particular,  
$$
 |H_{\ell,N}^d|=|V_{\ell,N}^d| = \dim \Pi_{\ell, N}^d.
$$
\end{enumerate}
\end{thm}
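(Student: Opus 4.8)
The plan is to read off both parts from the generic-parameter theory of Proposition~\ref{prop:Hahn1} by specializing $\k=-\ell-\one$, using analytic continuation in $\k$ exactly as in the one-variable passage that extends \eqref{eq:orthoHahnQ} to negative integer parameters. For part~(i) I would regard the orthogonality relation $\la Q_\nu(\cdot;\k,N),Q_\mu(\cdot;\k,N)\ra_{W_{\k,N}}=B_\nu(\k,N)\delta_{\nu,\mu}$ of Proposition~\ref{prop:Hahn1} as an identity between rational functions of $\k\in\CC^{d+1}$: each $Q_\nu(x;\k,N)$ and the weight $W_{\k,N}(x)$ is rational in $\k$, and the inner product is a finite sum, so both sides are rational and, agreeing on the open set $\k_i>-1$, agree wherever both are finite. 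I then set $\k=-\ell-\one$. On the left, $W_{-\ell-\one,N}=\sH_{\ell,N}$ is supported on $V_{\ell,N}^d$ and positive there (the coordinatewise version of the one-variable support computation, using $(-\ell_i)_{x_i}=0$ for $x_i>\ell_i$), so $\la\cdot,\cdot\ra_{W_{\k,N}}$ collapses to $\la\cdot,\cdot\ra_{\ell,N}$, while the constraints $\nu_j\le\ell_j$ keep every one-variable Hahn factor of $Q_\nu(\cdot;-\ell-\one,N)=\sQ_\nu(\cdot;\ell,N)$ nonsingular.

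On the right, substituting $\l_\k=|\k|+d+1=-|\ell|$, $\k_j=-\ell_j-1$ and $a_j=-\za_j-1$ into $B_\nu(\k,N)$ turns it into $\sB_\nu(\ell,N)$. The one genuinely mechanical step is to check that every Pochhammer symbol occurring in $\sB_\nu(\ell,N)$ is finite and nonzero precisely under the defining inequalities of $H_{\ell,N}^d$: the denominators $(-\ell_j)_{\nu_j}$ and $(-|\ell|)_{2|\nu|}$ are controlled by $\nu_j\le\ell_j$ and by $|\nu|\le N$ together with $|\nu|\le|\ell|-N$ (which add to $2|\nu|\le|\ell|$), whereas the numerators $(-N)_{|\nu|}$, $(-|\ell|)_{N+|\nu|}$ and $(-\za_j)_{\nu_j}$ are nonzero exactly under $|\nu|\le N$, $|\nu|\le|\ell|-N$ and $\nu_j\le\za_j$. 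Thus the inequalities cutting out $H_{\ell,N}^d$ are matched one-to-one with the finiteness and non-vanishing of the norm, and this proves (i).

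For part~(ii), since $\sH_{\ell,N}>0$ on $V_{\ell,N}^d$, the form $\la\cdot,\cdot\ra_{\ell,N}$ is positive definite on $\Pi_{\ell,N}^d=\RR[x_1,\ldots,x_d]/\CI(V_{\ell,N}^d)$, and as a vector space this quotient is the space of functions on $V_{\ell,N}^d$, so $\dim\Pi_{\ell,N}^d=|V_{\ell,N}^d|$. By (i) the family $\{\sQ_\nu(\cdot;\ell,N):\nu\in H_{\ell,N}^d\}$ consists of pairwise orthogonal vectors of nonzero norm $\sB_\nu(\ell,N)$, hence is linearly independent, which already gives $|H_{\ell,N}^d|\le|V_{\ell,N}^d|$. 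To upgrade independence to a basis it suffices to prove the reverse inequality, i.e. the cardinality identity $|H_{\ell,N}^d|=|V_{\ell,N}^d|$, since in a finite-dimensional space a linearly independent set of size equal to the dimension spans.

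The main obstacle is this cardinality identity. Using the closed form $|V_{\ell,N}^d|=\binom{N+d}{d}-\sum_{k=1}^{d+1}\binom{N-\ell_k+d-1}{d}$ recorded in the text, I would count $H_{\ell,N}^d$ through the tail sums $s_j:=|\bnu^j|=\nu_j+\cdots+\nu_d$, in terms of which the defining inequalities read $s_1\le N\wedge(|\ell|-N)$, $s_j-s_{j+1}\le\ell_j$, and (rewriting $\nu_j\le\za_j$) $s_j+s_{j+1}\le|\bell^{j+1}|$, with $s_{d+1}=0$. I would then induct on $d$, peeling off $\nu_1$: summing the $(d-1)$-variable count over the admissible range of $\nu_1$ should reproduce the recursive corner-slicing governing $|V_{\ell,N}^d|$, with base case $d=1$ the direct check $|H_{\ell,N}^1|=\deg_{\ell,N}+1=\ell_1\wedge N+\ell_2\wedge N-N+1=|V_{\ell,N}^1|$. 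Matching these two recursions is the strenuous combinatorial step carried out in \cite{IX20}. The factorization results of Section~2 (Theorem~\ref{thm:factor1D}) and their two-variable analogues in Section~4, which show that the $\sQ_\nu$ with $\nu\notin H_{\ell,N}^d$ vanish on $V_{\ell,N}^d$, do suggest an alternative spanning argument that would bypass the count; but that route is delicate enough (the out-of-$H$ polynomials can blow up under the specialization rather than merely vanishing) that it is cleaner to settle the cardinality combinatorially and then invoke the dimension argument above.
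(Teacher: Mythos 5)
Your proposal is correct and takes essentially the same approach as the paper: part (i) by analytic continuation in $\k$ of the classical orthogonality relation specialized at $\k=-\ell-\one$, with the defining inequalities of $H_{\ell,N}^d$ matching precisely the finiteness and non-vanishing of $\sB_\nu(\ell,N)$, and part (ii) by reducing the basis property to linear independence plus the cardinality identity $|H_{\ell,N}^d|=|V_{\ell,N}^d|$. The paper likewise does not reprove that combinatorial identity, deferring to the involved counting argument of \cite{IX20} exactly as you do, so your treatment matches the paper's in both structure and level of completeness.
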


The part (i) follows from the classical Hahn polynomials on $V_N^d$ as we indicated above. The proof of
part (ii) is highly non-trivial and requires a rather involved combinatorial proof. The norm $\sB_\nu(\ell,N)$ is 
nonzero, in fact positive, for $\nu \in H_{\ell,N}^d$, which is how the set $H_{\ell,N}^d$ is conceived
and defined. 

The definition of the polynomials $\sQ_\nu(\cdot; \ell,N)$ shows that these polynomials are well defined
for $\nu$ in a set $\CH_{\ell,N}^d$ that contains $H_{\ell,N}^d$ as a subset. The next theorem is a complementary result of  
Theorem \ref{thm:OP-polyhedra}.

\begin{thm}\label{thm:CH_ellN}
Let $N \in \NN$ and $\ell_i \in \NN$ satisfying \eqref{eq:l-condition}. The polynomials $\sQ_\nu(\cdot; \ell,N)$
are well defined on $V_{\ell,N}^d$ if $\nu \in \CH_{\ell,N}^d$, where 
\begin{equation} \label{eq:CH_ellN}
      \CH_{\ell,N}^d =\{ \nu \in \NN_0^d:   \nu_i \le \ell_i, \, 1 \le i \le d,\,  |\nu| \le N\}. 
\end{equation}
Furthermore, if $\nu \in \CH_{\ell,N}^d \setminus H_{\ell,N}^d$, then the polynomials $\sQ_{\nu}(\cdot; \ell, N)$ 
vanishes on $V_{\ell,d}$. 
\end{thm}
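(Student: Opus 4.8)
The statement to prove has two halves, and the plan is to treat them separately.

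For the well-definedness half, I would read off the genuine denominators in the product defining $\sQ_\nu(\cdot;\ell,N)$. Writing $M_j = N-|\xb_{j-1}|-|\bnu^{j+1}|$, the $j$-th factor is the prefactor $(-M_j)_{\nu_j}$ times the terminating series $\sQ_{\nu_j}(x_j;\ell_j,\za_j,M_j)$ of \eqref{eq:HahnQN}, whose only denominators are $(-\ell_j)_k$ and $(-M_j)_k$ for $0\le k\le\nu_j$. Since $(-M_j)_{\nu_j}/(-M_j)_k=(-M_j+k)_{\nu_j-k}$, the prefactor absorbs every $(-M_j)_k$, so the $j$-th factor is a genuine polynomial in $x_1,\dots,x_j$ exactly when $(-\ell_j)_k\ne0$ for $0\le k\le\nu_j$, i.e. $\nu_j\le\ell_j$. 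Together with $|\nu|\le N$ (the range in which $Q_\nu(\cdot;-\ell-\one,N)$ is defined in Proposition \ref{prop:Hahn1}) this is precisely $\nu\in\CH_{\ell,N}^d$, so $\sQ_\nu(\cdot;\ell,N)$ is well defined there.

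For the vanishing half I would avoid computing $\sB_\nu(\ell,N)$ directly and argue by orthogonality instead. Because $\sH_{\ell,N}>0$ on $V_{\ell,N}^d$, the form $\la\cdot,\cdot\ra_{\ell,N}$ depends only on the restriction of its arguments to $V_{\ell,N}^d$ and descends to a positive definite inner product on $\Pi_{\ell,N}^d=\RR[x_1,\dots,x_d]/\CI(V_{\ell,N}^d)$. Hence $\sQ_\nu(\cdot;\ell,N)$ vanishes on $V_{\ell,N}^d$ if and only if its class is orthogonal to all of $\Pi_{\ell,N}^d$. By Theorem \ref{thm:OP-polyhedra} the classes of $\{\sQ_\mu(\cdot;\ell,N):\mu\in H_{\ell,N}^d\}$ form a basis of $\Pi_{\ell,N}^d$ with nonzero self-products $\sB_\mu(\ell,N)$; so it suffices to show $\la\sQ_\nu(\cdot;\ell,N),\sQ_\mu(\cdot;\ell,N)\ra_{\ell,N}=0$ for every $\mu\in H_{\ell,N}^d$ (in fact for all distinct $\nu,\mu\in\CH_{\ell,N}^d$), after which expanding the class of $\sQ_\nu$ in the basis and pairing against each $\sQ_\mu$ forces all coefficients to vanish.

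The heart of the matter, and the step I expect to be the main obstacle, is establishing this orthogonality by analytic continuation from the classical parameters. For $\k_i>-1$ one has $\sum_{x\in V_N^d}Q_\nu(x;\k,N)Q_\mu(x;\k,N)W_{\k,N}(x)=B_\nu(\k,N)\delta_{\nu,\mu}$, which for $\nu\ne\mu$ is identically zero. The left-hand side is a rational function of $\k$ (a finite sum of products of ratios of Pochhammer symbols), so vanishing on the open set $\{\k_i>-1\}$ forces it to be the zero rational function. The key regularity input is the sharpened form of the first half: for $\nu\in\CH_{\ell,N}^d$ each $Q_\nu(x;\k,N)$ stays finite as $\k\to-\ell-\one$ for \emph{every} lattice point $x\in V_N^d$, not merely those in $V_{\ell,N}^d$, because its only $\k$-denominators $(\k_j+1)_k=(-\ell_j)_k$ with $k\le\nu_j\le\ell_j$ remain nonzero. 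Thus every summand is continuous at $\k=-\ell-\one$, the finite sum converges to the value $0$ of the zero rational function, and in that sum the terms with $x\in V_N^d\setminus V_{\ell,N}^d$ drop out since $W_{\k,N}(x)\to\sH_{\ell,N}(x)=0$ there. What survives is exactly $\la\sQ_\nu(\cdot;\ell,N),\sQ_\mu(\cdot;\ell,N)\ra_{\ell,N}$, which is therefore $0$.

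Assembling the pieces, for $\nu\in\CH_{\ell,N}^d\setminus H_{\ell,N}^d$ the class of $\sQ_\nu(\cdot;\ell,N)$ is orthogonal to the whole basis $\{\sQ_\mu(\cdot;\ell,N):\mu\in H_{\ell,N}^d\}$ and so is zero, i.e. $\sQ_\nu(\cdot;\ell,N)$ vanishes on $V_{\ell,N}^d$. The delicate points to watch are the rigorous persistence of the inner-product identity at the negative-integer point, where the degeneracy of $W_{\k,N}$ must be weighed against possible growth of $Q_\nu$ and is resolved precisely by the regularity from the first half, and retaining $|\nu|,|\mu|\le N$ so that classical orthogonality over $V_N^d$ is available to be continued.
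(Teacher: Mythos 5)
Your proof is correct, and its first half coincides with the paper's own argument: the paper, too, absorbs the $x$-dependent Pochhammer denominators $(-N+|\xb_{j-1}|+|\bnu^{j+1}|)_k$ into the prefactor via the identity $(-N+|\xb_{j-1}|+|\bnu^{j+1}|)_{\nu_j}/(-N+|\xb_{j-1}|+|\bnu^{j+1}|)_k=(-N+|\xb_{j-1}|+|\bnu^{j+1}|+k)_{\nu_j-k}$, leaving only the denominators $(-\ell_j)_k$, which are nonzero precisely when $\nu_j\le\ell_j$. For the vanishing half, however, you take a genuinely different route. The paper extends the full orthogonality relation of Theorem \ref{thm:OP-polyhedra}(i), including the norm formula \eqref{eq:Bnu}, to all $\nu\in\CH_{\ell,N}^d$ by the same analytic continuation, and then simply reads off from \eqref{eq:Bnu} that $\sB_\nu(\ell,N)=0$ whenever $|\nu|>|\ell|-N$ (so that $(-|\ell|)_{N+|\nu|}=0$) or $\nu_j>\za_j$ for some $j$ (so that $(-\za_j)_{\nu_j}=0$); since $\sH_{\ell,N}>0$ on $V_{\ell,N}^d$, the vanishing of the diagonal pairing $\langle\sQ_\nu,\sQ_\nu\rangle_{\ell,N}$ forces $\sQ_\nu$ to vanish pointwise there. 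You instead continue only the off-diagonal relations and then invoke Theorem \ref{thm:OP-polyhedra}(ii) --- the basis property --- to kill the coefficients of the class of $\sQ_\nu$ one at a time. Both arguments are valid. What your route buys is that you never inspect which Pochhammer factors of \eqref{eq:Bnu} degenerate off $H_{\ell,N}^d$, and your justification of the continuation step (regularity of every summand at $\k=-\ell-\one$ for \emph{every} $x\in V_N^d$, with the terms over $V_N^d\setminus V_{\ell,N}^d$ dying because $W_{\k,N}(x)\to 0$) is more explicit than the paper's one-line appeal to analytic continuation. What it costs is reliance on Theorem \ref{thm:OP-polyhedra}(ii), which the paper stresses is its hard combinatorial result; the paper's diagonal argument needs only part (i) together with positivity of the weight, so it is lighter, and it also explains why $H_{\ell,N}^d$ is defined as it is, namely as the set of indices where the norms are nonzero.
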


\begin{proof}
Let $\nu \in  \CH_{\ell,N}^d$. For $0\le k \le \nu_j$, expanding $\sQ_{\nu_j}$ in $\sQ_\nu$ using its ${}_3F_2$ 
definition, and using the identity  
$$
   \frac{(-N+|\xb_{j-1}|+|\bnu^{j+1}|)_{\nu_j}}{(-N+|\xb_{j-1}|+|\bnu^{j+1}|)_k} = (-N+|\xb_{j-1}|+|\bnu^{j+1}|+k)_{\nu_j-k},  
$$
it is easy to see that $\sQ_{\nu}$ is well defined on $ \CH_{\ell,N}^d$. Since the orthogonal relation in 
Theorem \ref{thm:OP-polyhedra} is derived by setting $\k = -\ell -\one$, it holds for $\nu \in \CH_{\ell,N}^d$. 
In particular, the expression of $\sB_\nu(\ell,N)$ in \eqref{eq:Bnu} shows that the norm of 
$\sQ_{\nu}(\cdot; \ell, N)$ is zero if $|\nu| > |\ell|-N$ or if $\nu_j > \za_j$ for some $j$, so that the polynomial is
entirely zero on $V_{\ell,N}^d$ if $\nu \in \CH_{\ell,N}^d$ but $\nu \notin H_{\ell,N}^d$. 
\end{proof}

This theorem can be regarded as an extension of Theorem \ref{thm:factor1D} in one variable. In that theorem, 
the polynomial $\sQ_n(\cdot,\ell_1,\ell_2,N)$ in one variable is factored into two lower degree polynomials, 
one of which vanishes on $V_{\ell, N}$ so that the vanishing of $\sQ_n(\cdot,\ell_1,\ell_2,N)$ becomes obvious
from the factorization. An analogous factorization, however, no longer holds for $\sQ_\nu(\cdot; \ell, N)$ in 
higher dimension. In fact, what makes this theorem interesting lies in the existence of non-trivial polynomials that 
vanish on the set $V_{\ell,N}^d$, as illustrated by the following example. 

\begin{exam}\label{ex:example1}
Let $d=2$, $\ell_1 =6$, $\ell_2 = 4$, $\ell_3 = 4$, $N = 7$. Then $V_{\ell, N}^2$ and $H_{\ell,N}^2$ each 
contains 23 points; see Figure 1. 
\begin{figure}
\centering
 \includegraphics[width=2.3in]{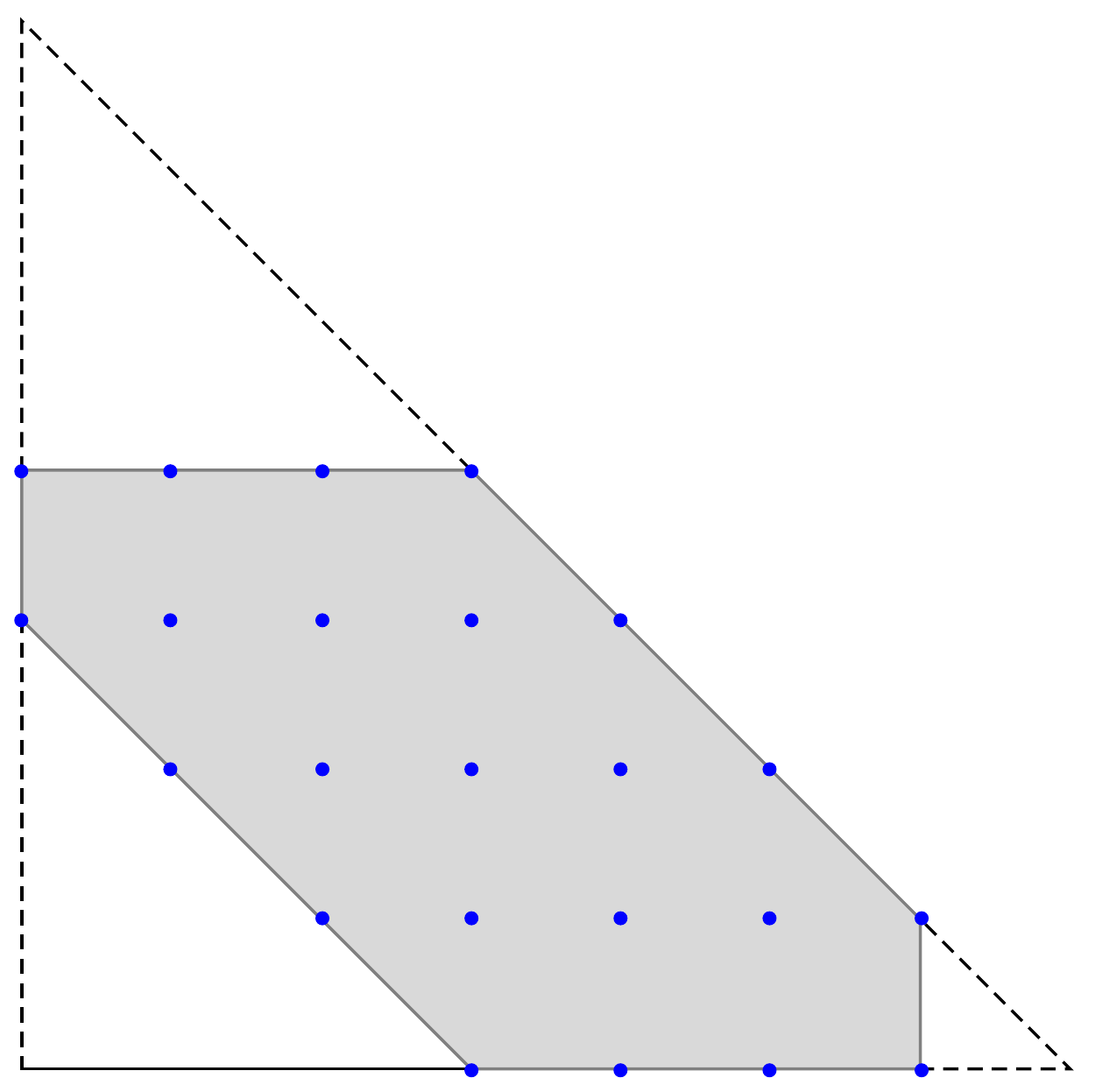}  \includegraphics[width=2.3in]{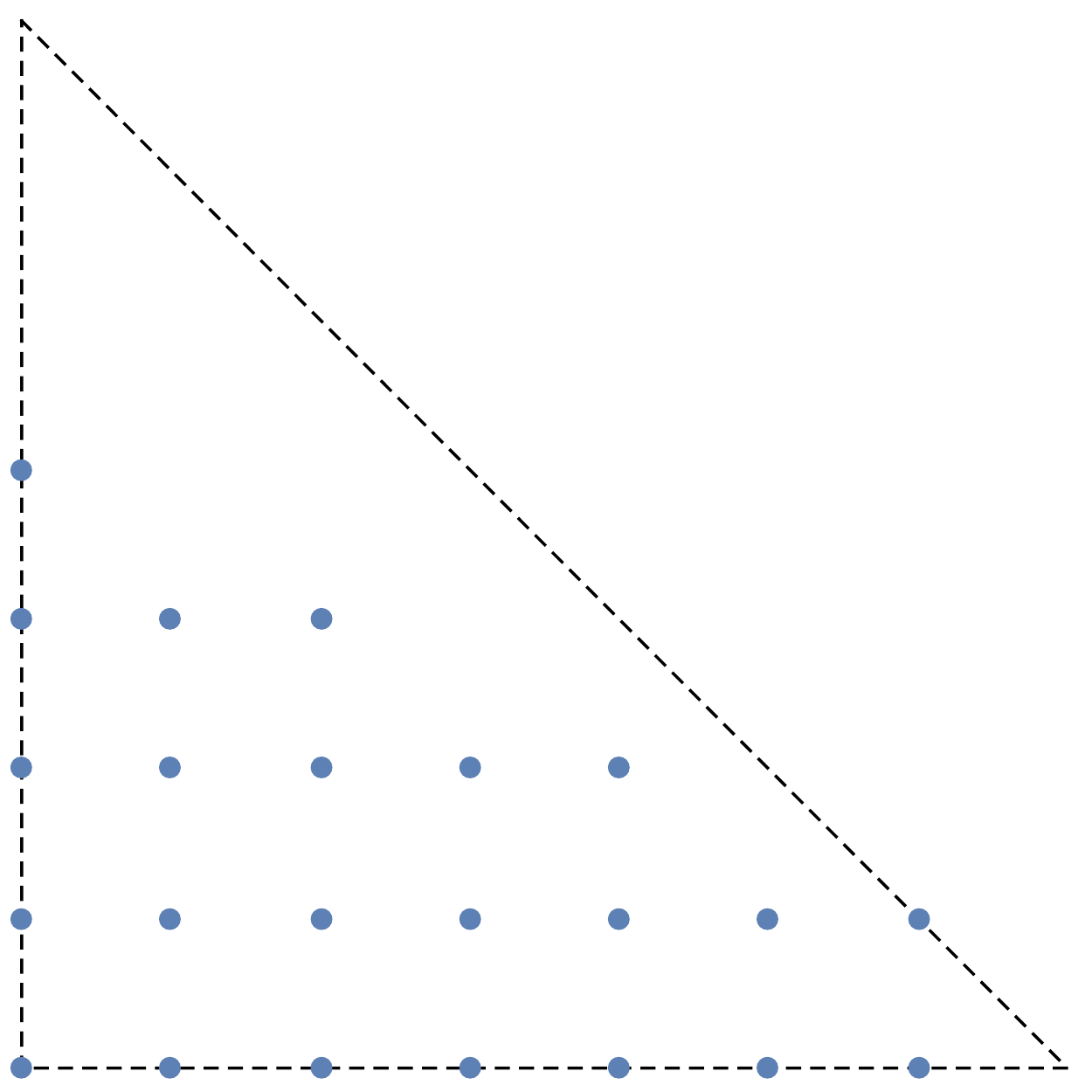}   
 \caption{$(\ell_1,\ell_2,\ell_3, N) = (6,4,4,7)$. Left: $V_{\ell,N}^2$. Right: $H_{\ell,N}^2$}
\end{figure}
The indices $\nu = (0,5)$ and $\nu = (3,3)$ are outside of $H_{\ell,N}^2$ and their corresponding polynomials 
\begin{align*}
   \sQ_{0,5} (x; \ell, N)  =  \, & (x_1-3)  \big(840 - 638 x_1 - 910 x_2 + 179 x_1^2 + 
   480 x_1 x_2 + 375 x_2^2 \\
      &\, \qquad \quad - 22 x_1^3  - 85 x_1^2 x_2 - 120 x_1 x_2^2  - 70 x_2^3 \\
      &\, \qquad \quad + x_1^4   + 5 x_1^3 x_2  + 
   10 x_1^2 x_2^2 + 10 x_1 x_2^3 + 5 x_2^4\big); \\
  \sQ_{3,3} (x; \ell, N)  = \, & -\frac{1}{48} (x_1-4) (x_1-3) (x_1-2) ( x_1 + 2 x_2 -7)  \\
     & \times (60 - 22 x_1 - 35 x_2 +  2 x_1^2  + 5 x_1 x_2 + 5 x_2^2),
\end{align*}
of degree 5 and 6, respectively, vanish on $V_{\ell,N}^2$; see Figure 2. 
\begin{figure}
\centering
 \includegraphics[width=2.3in]{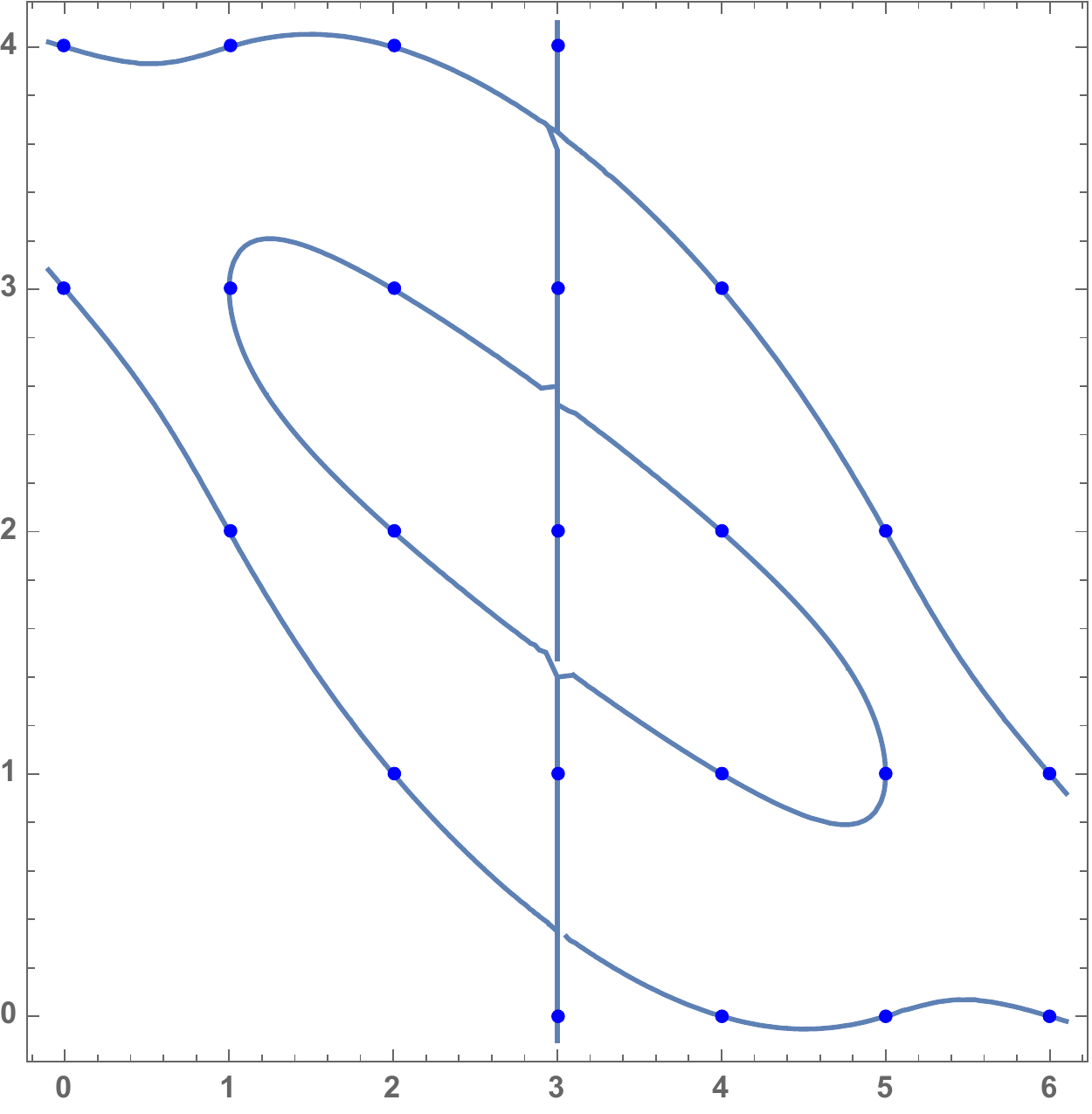}  \, \includegraphics[width=2.3in]{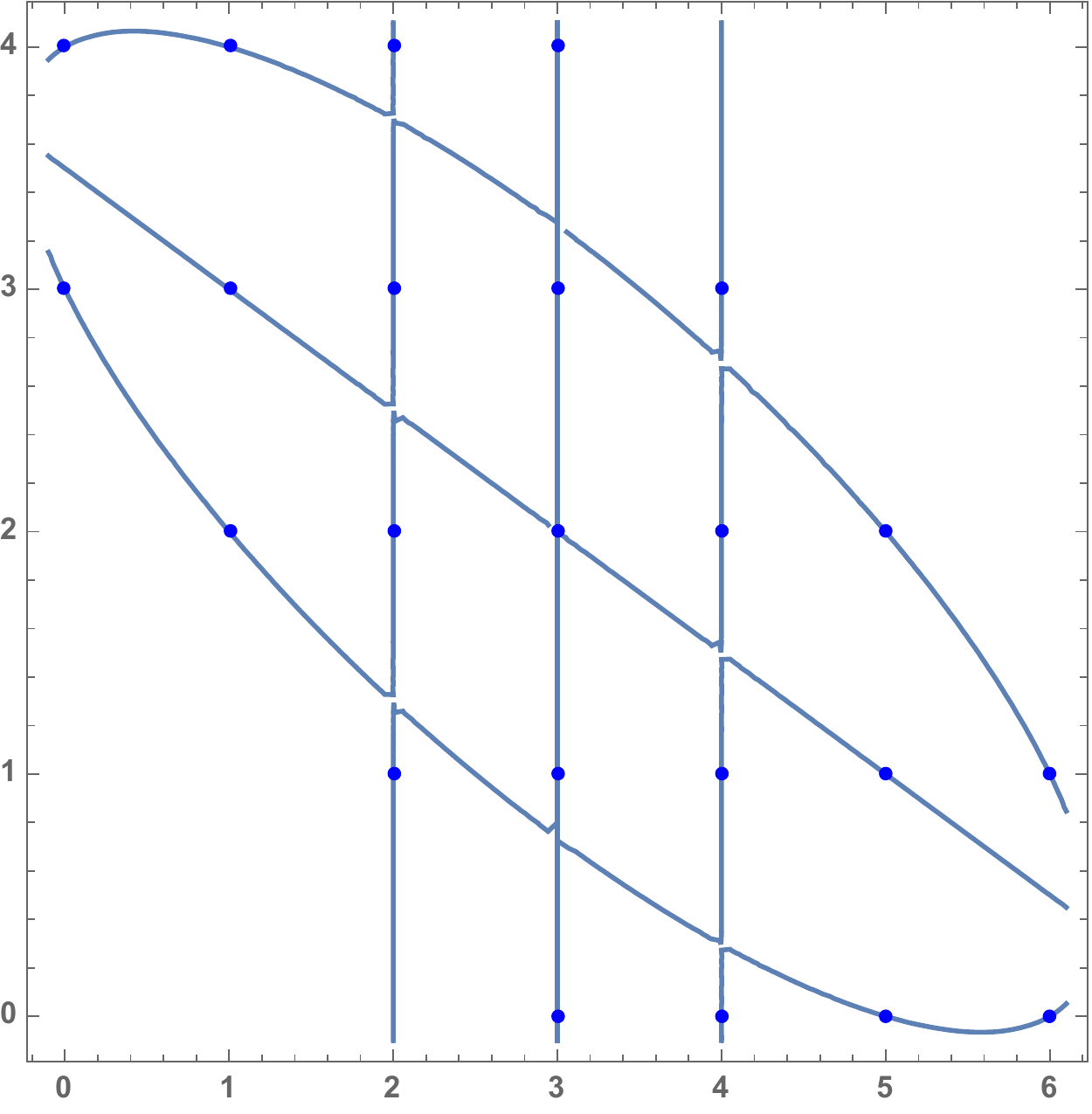}  
 \caption{$(\ell_1,\ell_2,\ell_3, N) = (6,4,4,7)$. Left: $ \sQ_{0,5} (x; \ell, N)$. Right: $ \sQ_{3,3} (x; \ell, N)$}
\end{figure}
\end{exam}

In the case of $\nu=(0,5)$, the polynomial $\sQ_{0,5}(x;\ell, N)$ contains an irreducible polynomial 
of degree $4$, thus its vanishing on $V_{\ell,N}^d$ is no longer obvious from the factorization, in 
contrast to Theorem \ref{thm:factor1D} in the one variable case. Observe that the polynomial of degree $4$
in $\sQ_{0,5}(x;\ell, N)$ and the irreducible polynomial of degree $2$ in $\sQ_{3,3} (x; \ell, N)$ have 
common zeros at 8 lattice points, which is the maximum possible predicted by the Bezout theorem. 

It is possible to be more specific about the factors in $\sQ_{\nu}(x;\ell, N)$ when $\nu$ is outside
of $H_{\ell,N}^d$. Let $N_j := N- |\xb_{j-1}|-|\bnu^{j+1}|$ so that $\sQ_{\nu_j}$ can be written as 
$\sQ_{\nu_j}(x_j;  \ell_j, \za_j, N_j)$. From $\nu_j \le \ell_j$, $1\le j \le d$, we 
obtain $|\bnu^{j+1}| \le |\bell^{j+1}| - \ell_{d+1}$, which implies, by \eqref{eq:l-condition}, that
$$
  \ell_j + \za_j  = \ell_j + |\bell^{j+1}| - 2 |\bnu^{j+1}| \ge \ell_j + \ell_{d+1} -|\bnu^{j+1}| \ge N- |\bnu^{j+1}|. 
$$
Hence, for $x \in V_{\ell,N}^d$, $ \ell_j + \za_j\ge N- |\bnu^{j+1}|- |\xb_{j-1}| = N_j$, which shows that the 
parameters of $\sQ_{\nu_j}$ in $\sQ_\nu$ satisfy \eqref{eq:ell-N}. Thus, if $N_j$ is a positive integer,
then $\sQ_{\nu_j}$ is a Hahn polynomial with negative integer parameters, so that Theorem \ref{thm:factor1D}
may apply if $\za_j < N_j$. 

\section{Factorization of Hahn polynomials of two variables with negative integer parameters}
\setcounter{equation}{0}

We examine the factorization of the Hahn polynomials of two variables more closely in this section. 
For $d =2$, the polynomial $\sQ_\nu (\cdot; \ell, N)$ is given by
\begin{align}\label{eq:Hahn2D}
 \sQ_\nu (x; \ell, N)  =   (-N)_{\nu_1}& \sQ_{\nu_1}(x_1;\ell_1,\ell_2+\ell_3- 2\nu_2, N-\nu_2) \\
      & \times  (-N+x_1)_{\nu_2}\sQ_{\nu_2}(x_2;\ell_2,\ell_3, N- x_1) \notag
\end{align}
and the index set $\nu \in H_{\ell,N}^2$ is given by 
\begin{align*}
    H_{\ell,N}^2 = \,& \big \{(\nu_1,\nu_2): \nu_1+\nu_2 \le N, \, \nu_1+\nu_2 \le  |\ell|-N, \\
       &  \quad \nu_1 \le \ell_1,\,  \nu_2 \le \ell_2, \, \nu_2\le \ell_3,  \, \nu_1+2\nu_2 \le \ell_2+\ell_3\big \}.
\end{align*}
It is easy to see that both $\sQ_{\nu_i}$, $i=1,2$, are Hahn polynomial with negative integer parameters. 
To determine their factorization for $\nu$ is outside of $H_{\ell,N}^2$, we need a precise description 
of $H_{\ell,N}^2$ using the height function $h_{\ell,N}$ introduced in \cite{IX20}. 

The height function measures the number of integer points on the vertical line $x_1 =\nu_1$ in $H_{\ell,N}^d$.
For $d = 2$, it is defined by  
$$
h_{\ell,N} (\nu_1):= \min\left(\ell_2,\ell_3,\left\lfloor \frac{\ell_2+\ell_3-\nu_1}{2}\right\rfloor,\ell_1+\ell_2+\ell_3-N-\nu_1,
N-\nu_1\right)+1.
$$
It follows immediately that the index set $H_{\ell,N}^2$ can be written as 
\begin{align*}
       H_{\ell,N}^2: = \{ (\nu_1,\nu_2):  0 \le \nu_1\le \ell_1, 0\le \nu_2 \le h_{\ell,N}(\nu_1) -1\}. 
\end{align*}
The value of the height function can be described more explicitly. 

\begin{lem}\label{lem:height}
The height function $h_{\ell, N}$ satisfies 
\begin{enumerate}[    \rm (i)]
\item if $0 \le \nu_1 \le |\ell_3-\ell_2|$, then $h_{\ell,N} (\nu_1) = \ell_2 \wedge \ell_3 +1$;
\item if $|\ell_3 - \ell_2| \le \nu_1 \le \ell_1 - \big |2N-|\ell| \big|$, then
$$
  h_{\ell,N}(\nu_1) = \ell_2 \wedge \ell_3  - \left \lfloor \frac{\nu_1-|\ell_3-\ell_2|+1}{2}\right \rfloor +1;
$$
\item if $\ell_1 - \big |2N-|\ell| \big| \le \nu_1 \le \ell_1$, then
$$
  h_{\ell,N}(\nu_1) = \begin{cases} |\ell| - N -\nu_1 +1, & |\ell| \le 2 N, \\  
        N - \nu_1 +1, & |\ell| > 2 N.\end{cases} 
$$
\end{enumerate}
\end{lem}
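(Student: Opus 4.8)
The plan is to collapse the five-term minimum defining $h_{\ell,N}(\nu_1)$ into a comparison of two monotone quantities and then to pin down the single point at which the minimizer switches. Since the formula for $h_{\ell,N}$ is symmetric in $\ell_2$ and $\ell_3$, as are all three asserted expressions, I may assume $\ell_2 \le \ell_3$, so that $\ell_2 \wedge \ell_3 = \ell_2$ and $|\ell_3-\ell_2| = \ell_3-\ell_2$. Writing $h_{\ell,N}(\nu_1) - 1 = \min(M_1, M_2)$, I group the three $N$-free entries into $M_1 := \min\big(\ell_2, \lfloor(\ell_2+\ell_3-\nu_1)/2\rfloor\big)$ (the entry $\ell_3$ being redundant once $\ell_2\le\ell_3$) and the two remaining entries into $M_2 := \min(|\ell|-N,\,N) - \nu_1$.

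First I would evaluate $M_1$ and $M_2$ on their own. For $M_1$, the identity $\lfloor x\rfloor \ge m \Leftrightarrow x \ge m$ for integer $m$ gives $\lfloor(\ell_2+\ell_3-\nu_1)/2\rfloor \ge \ell_2 \Leftrightarrow \nu_1 \le \ell_3-\ell_2$; hence $M_1 = \ell_2$ on $\nu_1 \le \ell_3-\ell_2$ and $M_1 = \lfloor(\ell_2+\ell_3-\nu_1)/2\rfloor =: C$ on $\nu_1 \ge \ell_3-\ell_2$, and the substitution $\nu_1 = (\ell_3-\ell_2)+k$ together with $\lfloor(2\ell_2-k)/2\rfloor = \ell_2 - \lfloor(k+1)/2\rfloor$ recasts $C$ into the form claimed in (ii). For $M_2$, one has $\min(|\ell|-N,N) = |\ell|-N$ when $|\ell|\le 2N$ and $=N$ when $|\ell|>2N$, which is exactly the dichotomy in (iii); I denote the active entry by $D := |\ell|-N-\nu_1$ or $E := N-\nu_1$.

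The heart of the proof is to show the crossover occurs precisely at $\nu_1^\ast := \ell_1 - \big|2N-|\ell|\big|$, which equals $2\ell_1+\ell_2+\ell_3-2N$ when $|\ell|\le 2N$ and $2N-\ell_2-\ell_3$ when $|\ell|>2N$. On the whole range $\nu_1 \ge \ell_3-\ell_2$ (regions (ii) and (iii)) one has $C \le \ell_2$, so $M_1 = C$ and it is enough to compare the active entry of $M_2$ with $C$; as that entry is an integer the floor may again be dropped, and both $D\le C$ and $E\le C$ reduce to the single linear inequality $\nu_1 \ge \nu_1^\ast$. This gives $\min(M_1,M_2)=C$, i.e. case (ii), for $\ell_3-\ell_2\le\nu_1\le\nu_1^\ast$, and $\min(M_1,M_2)=M_2$, i.e. case (iii), for $\nu_1\ge\nu_1^\ast$. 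For region (i), where $\nu_1\le\ell_3-\ell_2$ and $M_1=\ell_2$, I would instead check directly that the active entry of $M_2$ is $\ge\ell_2$, so that $\min(M_1,M_2)=\ell_2$.

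The main obstacle is bookkeeping rather than conceptual: one must confirm that the three intervals actually partition $[0,\ell_1]$, i.e. that $|\ell_3-\ell_2|\le\nu_1^\ast\le\ell_1$, and that the region-(i) inequality holds. After splitting into the cases $|\ell|\le 2N$ and $|\ell|>2N$, each of these reduces to the standing hypotheses $\ell_i\le N$ and $\ell_i+\ell_j\ge N$; for instance $\nu_1^\ast\ge|\ell_3-\ell_2|$ becomes $\ell_1+\ell_2\ge N$ in the first case and $\ell_3\le N$ in the second. Carrying the floor manipulations and the two sign-cases of $2N-|\ell|$ through consistently is the only delicate point.
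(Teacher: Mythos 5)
Your proof is correct: the grouping of the five-term minimum into the $N$-free part $M_1=\min\bigl(\ell_2,\lfloor(\ell_2+\ell_3-\nu_1)/2\rfloor\bigr)$ and the $N$-dependent part $M_2=\min\bigl(|\ell|-N,\,N\bigr)-\nu_1$, the floor manipulations (dropping floors against integers), and the location of the crossover at $\nu_1^\ast=\ell_1-\bigl|2N-|\ell|\bigr|$ all check out, and the boundary inequalities do reduce to the standing hypotheses $\ell_i\le N$ and $\ell_i+\ell_j\ge N$ exactly as you say. The paper omits this argument entirely, calling it ``a simple but tedious verification'' and deferring to \cite{IX20}, so your writeup is precisely the direct case analysis the paper has in mind, just carried out and organized systematically.
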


\begin{proof}
We can assume $\ell_3 \ge \ell_2$. The proof is a simple but tedious verification. The first two items have 
already been used in the proof of Lemma 4.3 in \cite{IX20}. We omit the details.
\end{proof}

To consider polynomials $\sQ_\nu(\cdot; \ell, N)$ with $\nu = (\nu_1,\nu_2)$ outside the domain $H_{\ell,N}^2$, 
we then assume $\nu_2 \ge h_{\ell,N}(\nu_1)$. Below we study $Q_\nu(\cdot; \ell, N)$ with 
$\nu_2 = h_{\ell,N}(\nu_1)$, that is, with index just outside $H_{\ell,N}^2$. For convenience, we shall
write 
\begin{equation} \label{eq:Hahn2D-2}
 \sQ_\nu (x; \ell, N)  =   (-N)_{\nu_1} \sQ_{\nu_1}(x_1; \wh \ell, \wh N)\, \sR_{\nu_2}(x;\ell_2,\ell_3, N),
\end{equation}
where $\wh\ell = (\ell_1,\ell_2+\ell_3-2\nu_2)$, $\wh N = N-\nu_2$, and  
\begin{equation} \label{eq:sRv2}
 \sR_{\nu_2}(x; \ell_2,\ell_3,N) := (-N +x_1)_{\nu_2} \sQ_{\nu_2}(x_2,\ell_2,\ell_3, N- x_1).
\end{equation}

We say that a polynomial of a variable $x$ splits if it splits in $\QQ$, i.e. if it can be written as 
$c \prod_j (x-a_j)$, $a_j \in \QQ$.  We examine $\sQ_{\nu_1}(x_1; \wh \ell, \wh N)$ first. Applying 
Theorem \ref{thm:factor1D}, this polynomial has linear factors if $\wh \ell_2 \le \wh N$ and 
$\deg_{\wh \ell, \wh N}< \nu_1 \le \min\{\ell_1, \wh N\}$, or more specifically, if  
\begin{equation}\label{eq:1st-factor}
 \ell_2+\ell_3-\nu_2  \le N, \quad  0  \le \min\{ \ell_1, N-\nu_2\} - \nu_1 < N- \ell_2-\ell_3 + \nu_2. 
\end{equation}

\begin{prop}\label{prop:sQv1}
Assume $\ell_3 \ge \ell_2$. Let $\nu_2 = h_{\ell,N}(\nu_1)$ and set 
$\sQ_{\nu_1}(x_1) = \sQ_{\nu_1}(x_1; \wh \ell, \wh N)$. Then  
\begin{enumerate}[(1)]
\item $\sQ_{\nu_1}$ is undefined if $|\ell | >  2 N$ and $\ell_1 - (|\ell |-2N)+1 \le \nu _1\le \ell_1$; 
\item $\sQ_{\nu_1}$ splits if $|\ell |\le 2 N$ and $1 \le \ell_3-\ell_2 -1 \le \nu_1 \le \ell_1$, or if 
$|\ell | >  2 N$ and $1 \le \nu_1 \le \ell_1 - (|\ell |-2N)$.
\end{enumerate} 
More precisely,  for $j =-1,0,1,\ldots$ and $\nu_1 = \ell_3-\ell_2 +j \le \ell_1 -\big| 2N - |\ell|\big|$,  
\begin{align} 
    \sQ_{\nu_1}(x_1) & = \frac{1}{(-N+\ell_2-i+1)_{\nu_1}} 
        \prod_{k=N- \ell_3-i+1}^{N- \ell_2 +i-1} (x_1-k),  \quad j = 2 i-1,  \label{eq:factorQv1A}\\
  \sQ_{\nu_1}(x_1) & = \frac{-N+\ell_3+i}{(-N+\ell_2-i+1)_{\nu_1}} (1-a_i x_1)
        \prod_{k=N-  \ell_3-i+1}^{N-\ell_2 +i-1} (x_1-k), \quad j = 2 i, \label{eq:factorQv1B}
 \end{align}
where $a_i=(\ell_1+\ell_2-\ell_3-2i)/((N- \ell_3-i)(N -\ell_2+i-1))$. Furthermore, if $|\ell| \le 2N$, then for 
$\nu_1 = \ell_1 + |\ell| -2 N +j$ with $j =1,2,\ldots, 2N -|\ell|$, 
\begin{align}\label{eq:factorQv1C}
   \sQ_{\nu_1}(x_1) & = \frac{1}{(- \ell_1)_{\nu_1}} \prod_{k= 2 N- |\ell| - j +1}^{\ell_1} (x_1-k).
\end{align}
\end{prop}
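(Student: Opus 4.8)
The plan is to feed the explicit value $\nu_2=h_{\ell,N}(\nu_1)$ supplied by \lemref{lem:height} into the parameters $\wh\ell=(\ell_1,\ell_2+\ell_3-2\nu_2)$ and $\wh N=N-\nu_2$ of \eqref{eq:Hahn2D-2}, and then to read the behaviour of the one-variable polynomial $\sQ_{\nu_1}(x_1)=\sQ_{\nu_1}(x_1;\wh\ell,\wh N)$ off its defining ${}_3F_2$ in \eqref{eq:HahnQN} together with \thmref{thm:factor1D}. Two facts drive the argument: by \eqref{eq:HahnQN} the series has lower parameters $-\ell_1$ and $-\wh N$, so $\sQ_{\nu_1}$ is well defined precisely when $\nu_1\le \ell_1\wedge\wh N$; and, provided $\wh\ell_2\le\wh N$, \thmref{thm:factor1D} factors $\sQ_{\nu_1}(\cdot;\wh\ell,\wh N)$ for $\deg_{\wh\ell,\wh N}<\nu_1\le \ell_1\wedge\wh N$, the relevant datum being the excess index $m:=\nu_1-\deg_{\wh\ell,\wh N}-1$. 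I would split the analysis according to the three regimes of \lemref{lem:height}, which under $\ell_3\ge\ell_2$ determine $\nu_2$ explicitly.

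For part (1) I take regime (iii) with $|\ell|>2N$, where \lemref{lem:height} gives $\nu_2=N-\nu_1+1$ and hence $\wh N=\nu_1-1$. The $k=\nu_1$ term of the defining series then carries the nonzero numerator factor $(-\nu_1)_{\nu_1}=(-1)^{\nu_1}\nu_1!$ against the vanishing denominator factor $(-\wh N)_{\nu_1}=(-(\nu_1-1))_{\nu_1}=0$, so the ${}_3F_2$ is singular and $\sQ_{\nu_1}$ is undefined throughout $\ell_1-(|\ell|-2N)+1\le\nu_1\le\ell_1$.

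For the splitting claim in regime (ii) I write $\nu_1=\ell_3-\ell_2+j$. The floor in \lemref{lem:height}(ii) collapses the two consecutive values $j=2i-1$ and $j=2i$ to the single height $\nu_2=\ell_2-i+1$, so that $\wh N=N-\ell_2+i-1$ and $\wh\ell_2=\ell_3-\ell_2+2i-2$; a short estimate using \eqref{eq:l-condition} gives $\wh\ell_2\le\wh N$ and $\wh N<\ell_1$, whence $\ell_1\wedge\wh N=\wh N$ and $\deg_{\wh\ell,\wh N}=\wh\ell_2$. Therefore $m=0$ for $j=2i-1$ and $m=1$ for $j=2i$. Applying \thmref{thm:factor1D} with $m=0$ makes $\sQ_m=1$ and yields the pure product \eqref{eq:factorQv1A} over $[\wh N-\wh\ell_2,\wh N]=[N-\ell_3-i+1,\,N-\ell_2+i-1]$ with constant $1/(-\wh N)_{\nu_1}=1/(-N+\ell_2-i+1)_{\nu_1}$; with $m=1$ the surviving factor $\sQ_1(x_1;\wh N-\wh\ell_2-1,|\wh N-\ell_1|-1,\ell_1\vee\wh N)$ is linear, and computing this linear Hahn polynomial explicitly furnishes the remaining linear factor $(1-a_ix_1)$ recorded in \eqref{eq:factorQv1B}.

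It remains to treat regime (iii) with $|\ell|\le 2N$, where $\nu_2=N-\ell_1-j+1$, so $\wh N=\ell_1+j-1\ge\ell_1$ and now $\ell_1\wedge\wh N=\ell_1$; here one checks $\wh\ell_2<\wh N$ and $\deg_{\wh\ell,\wh N}=\ell_1+|\ell|-2N+j-1$, so $m=0$ and \thmref{thm:factor1D} delivers the pure product \eqref{eq:factorQv1C} over $[2N-|\ell|-j+1,\ell_1]$ with constant $1/(-\ell_1)_{\nu_1}$. The crossover of $\wh N$ through $\ell_1$ at $\nu_1=\ell_1+|\ell|-2N$ is precisely what toggles the value of $\ell_1\wedge\wh N$ and thereby separates \eqref{eq:factorQv1A}--\eqref{eq:factorQv1B} from \eqref{eq:factorQv1C}. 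I expect the main obstacle to be this region-by-region bookkeeping of the height function: the parity argument isolating $m\in\{0,1\}$ in regime (ii), the correct reading of $\ell_1\wedge\wh N$ against $\ell_1\vee\wh N$ according to whether $\wh N$ lies below or above $\ell_1$, and the degenerate value $i=N-\ell_3$ at the junction of regimes (ii) and (iii) when $|\ell|>2N$, where the prefactor $-N+\ell_3+i$ and the denominator of $a_i$ both vanish and the formula must be read as the edge of the undefined range.
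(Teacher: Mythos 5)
Your proposal is correct and takes essentially the same route as the paper's proof: the same regime-by-regime evaluation of the height function via Lemma~\ref{lem:height}, the same verification that the hypotheses of Theorem~\ref{thm:factor1D} hold with excess index $m=0$ (odd $j$, and the $|\ell|\le 2N$ tail regime) or $m=1$ (even $j$, producing the linear factor $1-a_ix_1$), and the same singularity argument for part (1), where $\wh N=\nu_1-1$ forces the vanishing lower Pochhammer $(-\wh N)_{\nu_1}=(-\nu_1+1)_{\nu_1}=0$ in the defining ${}_3F_2$. The only cosmetic difference is that you fold the values $j=-1,0$ into the regime (ii) floor formula, whereas the paper handles them through regime (i) of Lemma~\ref{lem:height}; the height value $\ell_2+1$ agrees in both readings, so the computation is unaffected.
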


\begin{proof}
If $0 \le \nu_1 \le \ell_3-\ell_2$, then $h_{\ell,N} (\nu_1) = \ell_2 +1$. Hence, with $\nu_2 = \ell_2 +1$, it is 
easy to see that $\ell_2+\ell_3-\nu_2 = \ell_3-1\le N$ and $\min\{ \ell_1, N - \nu_2\} = N-\ell_2-1$ and 
$\deg_{\wh \ell, \wh N}= \ell_3-\ell_2-2$. 
Thus, \eqref{eq:1st-factor} holds if $\nu_1 > \ell_3 -\ell_2 -2$. Furthermore, if $\nu_1 = \ell_3-\ell_2 -1$, then
$\nu_1 = \deg_{\wh \ell, \wh N}+1$, so that we can apply Theorem \ref{thm:factor1D} with $m =0$ to factor
$\sQ_{\nu_1}$, which gives \eqref{eq:factorQv1A} for $i=0$, whereas if $\nu_1 = \ell_3-\ell_2$, then we can 
apply Theorem \ref{thm:factor1D} with $m =1$ to factor $\sQ_{\nu_1}$ and the factorization contains also 
$\sQ_1(x_1,\wh N - \wh \ell_2 -1, |N-\ell_1|-1, \wh N)$, a polynomial of degree 1 that gives the factor $1-a_0 x_1$,
which gives \eqref{eq:factorQv1B} for $i=0$

If $\ell_3 - \ell_2  < \nu_1 \le  \ell_1-\big| 2N - |\ell|\big|$, we write $\nu_1 = \ell_3 - \ell_2 + j$. By
Lemma \ref{lem:height},
$$
   h_{\ell,N} (\ell_3 - \ell_2 + j) = \ell_2 - \Big \lfloor \frac{j+1}{2}\Big \rfloor +1, 
        \quad 1 \le \Big \lfloor \frac{j+1}{2}\Big \rfloor  \le \min\{N-\ell_3, \ell_1+\ell_2 -N\}. 
$$ 
With $\nu_2 = h_{\ell,N} (\ell_3 - \ell_2 + j)$, we have  $\ell_2+\ell_3-\nu_2 \le \ell_3 + \lfloor \frac{j+1}{2} \rfloor -1 \le 
N -1 < N$. Moreover, $\min\{ \ell_1, N - \nu_2\} = N -  \ell_2 +  \lfloor \frac{j+1}{2}\rfloor -1$, so that 
$\deg_{\wh \ell, \wh N}  = \ell_3-\ell_2 +2 \lfloor \frac{j+1}{2}\rfloor -2$. Hence, \eqref{eq:1st-factor} holds for all 
$\ell_3 - \ell_2  < \nu_1 \le  \ell_1-\big| 2N - |\ell|\big|$. Furthermore, if $j =2i-1$, then $\nu_1 = \deg_{\wh \ell, \wh N}
+1$, so that we can apply Theorem \ref{thm:factor1D} with $m =0$ to factor $\sQ_{\nu_1}$, which gives \eqref{eq:factorQv1A} for $i>0$, whereas if $j = 2i$, then $\nu_1 = \deg_{\wh \ell, \wh N}+2$, we can then apply 
Theorem \ref{thm:factor1D} with $m =1$ to factor $\sQ_{\nu_1}$ and the factorization contains also
$\sQ_1(x_1,\wh N - \wh \ell_2 -1, |\wh N-\ell_1|-1, \wh N)$, which gives \eqref{eq:factorQv1B} for $i>0$.

If $\ell_1-\big| 2N - |\ell|\big|+1 \le \nu_1 \le \ell_1$,  we write $\nu_1 = \ell_1-\big| 2N - |\ell|\big| + j$ for
$j =1,2,\ldots,\big| 2N - |\ell|\big|$. Here we need to consider two cases. First, assume $|\ell| \le 2 N$. Then,
by Lemma \ref{lem:height},
$$
   h_{\ell,N} (\ell_1-\big| 2N - |\ell|\big| + j) = N-\ell_1 - j +1, \quad 1 \le j \le  2N - |\ell|. 
$$
With $\nu_2 = h_{\ell,N} (\ell_1-\big| 2N - |\ell|\big| + j)$, we have $\ell_2+\ell_3-\nu_2=  |\ell| -N+j -1 \le N -1 < N$
and $\deg_{\wh \ell, \wh N} = \ell_1 - (2N- |\ell|)+j-1$, so that $\nu_1 = \ell_1-\big| 2N - |\ell|\big| + j = 
\deg_{\wh \ell, \wh N}+1$ for all $j$. Thus, we can apply Theorem \ref{thm:factor1D} with $m =0$ to factor 
$\sQ_{\nu_1}$, which gives \eqref{eq:factorQv1C}. Next we assume $|\ell| > 2N$. Then, by
Lemma \ref{lem:height},
$$
 h_{\ell,N} (\ell_1-\big| 2N - |\ell|\big| + j) = \ell_2+\ell_3 - N- j +1, \quad 1 \le j \le  2N - |\ell|. 
$$
With $\nu_2 = h_{\ell,N} (\ell_1-\big| 2N - |\ell|\big| + j)$, a quick verification shows that 
$\nu_1+2\nu_2 -|\ell|-1 = -\ell_1-j+1$ and $N - \nu_2 = \nu_1-1$, which by \eqref{eq:HahnQN} leads to
$$
  \sQ_{\nu_1}(x_1) = {}_3 F_2 \left( \begin{matrix} -\nu_1, -\ell_1- j+1, -x\\
       -\ell_1, - \nu_1+1 \end{matrix}; 1\right).  
$$
Since $(-\nu_1+1)_{\nu_1} = 0$, this function is infinite for all $j \ge 1$. 
This completes the proof. 
\end{proof}

This proposition shows that the polynomials $\sQ_{\nu_1}$ is either undefined or splits with only 
possible exception when $2 \le \nu_1 \le |\ell_3 - \ell_2| -2$, and $|\ell_3 - \ell_2| \ge 4$, since the
factorizations are trivial for polynomials of degree $0$ or $1$. 
 
\begin{prop}\label{prop:4.3}
For $\nu_2 =  h_{\ell,N}(\nu_1)$, the polynomial $\sQ_{\nu}(x;\ell,N)$ is not well-defined if and only if 
\begin{enumerate} [  \rm(i)]
\item $\ell_3 \ge \ell_2$ and $\nu_1 \le \ell_3 - \ell_2$;
\item $|\ell | >  2 N$ and $2N+1-\ell_2-\ell_3\leq\nu_1$. 
\end{enumerate}
\end{prop}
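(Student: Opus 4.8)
The plan is to read the answer off the factorization \eqref{eq:Hahn2D-2},
\[
 \sQ_\nu (x; \ell, N)  =   (-N)_{\nu_1}\, \sQ_{\nu_1}(x_1; \wh \ell, \wh N)\, \sR_{\nu_2}(x;\ell_2,\ell_3, N),
\]
by analysing its two nonconstant factors independently. The scalar $(-N)_{\nu_1}$ is nonzero because $\nu_1\le\ell_1\le N$; moreover $\sQ_{\nu_1}(x_1;\wh\ell,\wh N)$ involves only the variable $x_1$, whereas the sole denominator in $\sR_{\nu_2}$ that can blow up lives in its $x_2$-part. Consequently the two factors have independent sources of singularity, and an infinite coefficient created in one of them cannot be annihilated by multiplying with the finite (and, when defined, nonzero) other factor or with the nonzero scalar. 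I would therefore first establish that $\sQ_\nu$ fails to be well-defined \emph{if and only if} at least one of $\sQ_{\nu_1}(x_1;\wh\ell,\wh N)$ and $\sR_{\nu_2}(x;\ell_2,\ell_3,N)$ fails to be well-defined, reducing the statement to locating the failure set of each factor. Throughout I take $\ell_3\ge\ell_2$, as in Proposition~\ref{prop:sQv1} and Lemma~\ref{lem:height}.

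For the first factor I would quote Proposition~\ref{prop:sQv1} verbatim. With $\nu_2=h_{\ell,N}(\nu_1)$, its part~(1) says that $\sQ_{\nu_1}(x_1;\wh\ell,\wh N)$ is undefined exactly when $|\ell|>2N$ and $\ell_1-(|\ell|-2N)+1\le\nu_1\le\ell_1$, while part~(2) together with the remark following it shows that in every other range $\sQ_{\nu_1}$ either splits into linear factors or is an honest polynomial of positive degree --- in particular it is well-defined. Since $\ell_1-(|\ell|-2N)+1=2N+1-\ell_2-\ell_3$, the first factor is undefined precisely in case~(ii). As this condition and the quantity $\nu_2=h_{\ell,N}(\nu_1)$ governing it are symmetric in $\ell_2,\ell_3$, the same conclusion persists when $\ell_2>\ell_3$.

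For the second factor, recall from \eqref{eq:sRv2} that $\sR_{\nu_2}(x;\ell_2,\ell_3,N)=(-N+x_1)_{\nu_2}\,\sQ_{\nu_2}(x_2;\ell_2,\ell_3,N-x_1)$. Expanding $\sQ_{\nu_2}$ through its ${}_3F_2$ and using the Pochhammer-ratio identity employed in the proof of Theorem~\ref{thm:CH_ellN}, the prefactor $(-N+x_1)_{\nu_2}$ exactly cancels the potentially vanishing denominator $\big(-(N-x_1)\big)_k$, so the only remaining obstruction is the lower parameter $-\ell_2$. Hence $\sR_{\nu_2}$ is undefined if and only if $\nu_2>\ell_2$. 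Under $\ell_3\ge\ell_2$ we have $\ell_2=\ell_2\wedge\ell_3$ and $\nu_2=h_{\ell,N}(\nu_1)\le\ell_2\wedge\ell_3+1$, so $\nu_2>\ell_2$ forces $h_{\ell,N}(\nu_1)=\ell_2\wedge\ell_3+1$, which by Lemma~\ref{lem:height}(i) happens exactly for $\nu_1\le\ell_3-\ell_2$; this is case~(i). (When $\ell_2>\ell_3$ one has $\nu_2\le\ell_3+1\le\ell_2$, so $\sR_{\nu_2}$ is always well-defined and case~(i) is correctly vacuous.)

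Combining the two factor analyses yields the biconditional. A one-line check shows that cases~(i) and~(ii) are mutually exclusive: their simultaneous validity would force $2N+1-\ell_2-\ell_3\le\nu_1\le\ell_3-\ell_2$, hence $\ell_3\ge N+1$, contradicting $\ell_3\le N$. The step I expect to require the most care is the opening qualitative reduction --- justifying that undefinedness of the product is equivalent to undefinedness of one of the factors, i.e.\ ruling out cross-cancellation --- since the quantitative core of the argument for $\sQ_{\nu_1}$ is already contained in Proposition~\ref{prop:sQv1}, and what remains is the bookkeeping that matches the three regimes of the height function in Lemma~\ref{lem:height} to the inequalities in~(i) and~(ii).
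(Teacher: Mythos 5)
Your proposal is correct and takes essentially the same route as the paper's own proof: both analyze the factorization \eqref{eq:Hahn2D-2} factor by factor, invoking Proposition~\ref{prop:sQv1} for the $x_1$-factor and tracing the failure of $\sR_{\nu_2}$ to the lower parameter $-\ell_2$ of its ${}_3F_2$, matched against the height-function values of Lemma~\ref{lem:height}. The only difference is cosmetic: for $\ell_2>\ell_3$ the paper evaluates $\sR_{\ell_3+1}$ explicitly via Chu--Vandermonde (obtaining the extra information that it splits), whereas you dispose of that case by the simpler bound $\nu_2\le\ell_3+1\le\ell_2$.
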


\begin{proof}
We first assume $\ell_3 \ge \ell_2$. If $0\le \nu_1 \le \ell_3 - \ell_2$, 
then $\nu_2 = \ell_2 +1$ 
according to the proof of Proposition \ref{prop:sQv1}. For this $\nu_2$, it follows by \eqref{eq:HahnQN} that 
$$
 \sR_{\ell_2+1}(x; \ell_2,\ell_3,N) = (-N +x_1)_{\ell_2+1}\, {}_3 F_2 \left( \begin{matrix} -\ell_2 -1, -\ell_3, -x_2\\ 
       -\ell_2, - N+ x_1 \end{matrix}; 1\right),
$$
which shows that $\sR_{\ell_2+1}$ is undefined, since $(-\ell_2)_{\ell_2+1}  = 0$, if $\ell_3 > \ell_2$. The
case $\nu_1 > \ell_3-\ell_2$ follows readily from Proposition \ref{prop:sQv1}. Assume now $\ell_3 < \ell_2$
and $\nu_1 \le \ell_2-\ell_3$. Then $\nu_2=h_{\ell,N}(\nu_1)=\ell_3+1$ and 
\begin{align*}
 \sR_{\ell_3+1}(x; \ell_2,\ell_3,N) &= (-N +x_1)_{\ell_3+1}\, {}_3 F_2 \left( \begin{matrix} -\ell_3 -1, -\ell_2, -x_2\\ 
       -\ell_2, - N+ x_1 \end{matrix}; 1\right)\\
       &= (-N +x_1)_{\ell_3+1}\, {}_2 F_1 \left( \begin{matrix} -\ell_3 -1,  -x_2\\ 
        - N+ x_1 \end{matrix}; 1\right)= ( - N+ x_1+x_2)_{\ell_3+1},
\end{align*}
so that $\sR_{\ell_3+1}(x; \ell_2,\ell_3,N)$ is well defined and splits. Thus, the statement follows from Proposition \ref{prop:sQv1}.
\end{proof}

This shows that the non-trivial polynomials that vanishes on a larger set of lattice points, as seen in
Example \ref{ex:example1}, are $\sR_{\nu_2}(x; \ell,N)$. For $\nu_2 = h_{\ell,N}(\nu_1)$ and $\nu_1$ small,
it is possible to determine $\sR_{\nu_2}$ more explicitly. 

\begin{prop}
If $\ell_2 = \ell_3$, then 
$$
 \sR_{\ell_2}(x,\ell_2,\ell_2,N) = \frac{(-N+x_1+ x_2)_{\ell_2 +1} - (-1)^{\ell_2+1} (-x_2)_{\ell_2+1}}{-N+\ell_2+ x_1},
$$
which contains a linear factor $N-x_1-2x_2$ if $\ell_2$ is odd. Furthermore, 
$$
    \sR_{\ell_2+1}(x, \ell_2,\ell_2,N) = (-N+\ell_2+ x_1) \sR_{\ell_2}(x,\ell_2,\ell_2,N).
$$
\end{prop}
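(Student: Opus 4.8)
The plan is to evaluate both $\sR_{\ell_2}$ and $\sR_{\ell_2+1}$ directly from the definition \eqref{eq:sRv2}, exploiting the fact that when $\ell_3=\ell_2$ the ${}_3F_2$ defining the one-variable factor acquires a repeated parameter that cancels. Writing $\sQ_{\ell_2}(x_2;\ell_2,\ell_2,N-x_1)$ via \eqref{eq:HahnQN}, the upper entries are $-\ell_2,\,-\ell_2-1,\,-x_2$ and the lower entries $-\ell_2,\,-N+x_1$; since the series terminates at $k=\ell_2$, the factors $(-\ell_2)_k$ in numerator and denominator cancel termwise and the polynomial collapses to the truncated sum $\sum_{k=0}^{\ell_2}\frac{(-\ell_2-1)_k(-x_2)_k}{(-N+x_1)_k\,k!}$. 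This is the terminating Chu--Vandermonde series, whose full value (running to $k=\ell_2+1$) equals $(-N+x_1+x_2)_{\ell_2+1}/(-N+x_1)_{\ell_2+1}$, with only its final term omitted. Evaluating that $k=\ell_2+1$ term through $(-\ell_2-1)_{\ell_2+1}=(-1)^{\ell_2+1}(\ell_2+1)!$, multiplying by the prefactor $(-N+x_1)_{\ell_2}$ from \eqref{eq:sRv2}, and using $(-N+x_1)_{\ell_2}/(-N+x_1)_{\ell_2+1}=1/(-N+\ell_2+x_1)$, I obtain exactly the claimed closed form for $\sR_{\ell_2}$.

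For the linear factor I would substitute the line $N-x_1-2x_2=0$, i.e.\ $x_1=N-2x_2$, into the numerator $(-N+x_1+x_2)_{\ell_2+1}-(-1)^{\ell_2+1}(-x_2)_{\ell_2+1}$. On this line $-N+x_1+x_2=-x_2$, so the numerator becomes $\bigl(1-(-1)^{\ell_2+1}\bigr)(-x_2)_{\ell_2+1}$, which vanishes identically in $x_2$ precisely when $\ell_2$ is odd. Since $N-x_1-2x_2$ is an irreducible linear polynomial vanishing on its entire zero locus in $\QQ[x_1,x_2]$, it divides the numerator; as the denominator factor $-N+\ell_2+x_1$ is coprime to it, $N-x_1-2x_2$ divides $\sR_{\ell_2}$.

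The recurrence is the cleanest part. Running the same cancellation for $\sQ_{\ell_2+1}(x_2;\ell_2,\ell_2,N-x_1)$, whose defining ${}_3F_2$ has upper entries $-\ell_2-1,\,-\ell_2,\,-x_2$ and lower entries $-\ell_2,\,-N+x_1$, the repeated $-\ell_2$ again cancels and the series is cut off at $k=\ell_2$ by the remaining upper $-\ell_2$, yielding the \emph{same} truncated sum as for $\sQ_{\ell_2}$. Hence $\sQ_{\ell_2+1}(x_2;\ell_2,\ell_2,N-x_1)=\sQ_{\ell_2}(x_2;\ell_2,\ell_2,N-x_1)$, so the two polynomials $\sR_{\ell_2+1}$ and $\sR_{\ell_2}$ differ only through their prefactors in \eqref{eq:sRv2}, and the identity follows from $(-N+x_1)_{\ell_2+1}=(-N+\ell_2+x_1)(-N+x_1)_{\ell_2}$.

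The one point requiring genuine care is the bookkeeping of the two competing termination indices: because $\ell_3=\ell_2$ forces an upper parameter to coincide with a lower one, I must verify that the cancellation of $(-\ell_2)_k$ is legitimate for every $0\le k\le\ell_2$ (it is, as $(-\ell_2)_k\neq 0$ there) and that the sum truncates at $k=\ell_2$ rather than $k=\ell_2+1$. This is exactly the boundary phenomenon that renders $\sR_{\ell_2+1}$ well defined here, in contrast to the strict case $\ell_3>\ell_2$ treated in Proposition \ref{prop:4.3}, where the analogous term is singular. Once this is settled, the remaining manipulations are routine Pochhammer algebra.
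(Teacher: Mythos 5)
Your evaluation of $\sR_{\ell_2}$ and your extraction of the linear factor are correct, and they follow essentially the paper's own route: the truncated Chu--Vandermonde sum is exactly the paper's computation, run in the opposite order (the paper evaluates $\sR_{\ell_2+1}$ first, via a limiting procedure, and then divides by $-N+\ell_2+x_1$, while you evaluate the unambiguous object $\sR_{\ell_2}$ directly and multiply). Your coprimality argument for why $N-x_1-2x_2$ divides $\sR_{\ell_2}$ itself, and not merely the numerator, is in fact more careful than what the paper writes.

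The gap is in the recurrence, and it is precisely the point you flag at the end but do not actually resolve. The index $\nu_2=\ell_2+1$ exceeds the first parameter $\ell_2$, hence lies outside the range $0\le n\le \ell_1\wedge N$ in which \eqref{eq:HahnQN} defines $\sQ_n$, so $\sR_{\ell_2+1}$ has no prior meaning; the ${}_3F_2$ one writes down for it has an indeterminate term at $k=\ell_2+1$, in which the numerator factor $(-\ell_2)_{\ell_2+1}$ and the denominator factor $(-\ell_2)_{\ell_2+1}$ both vanish. Your justification for dropping that term --- ``the repeated $-\ell_2$ again cancels and the series is cut off at $k=\ell_2$ by the remaining upper $-\ell_2$'' --- is self-contradictory: the parameter $-\ell_2$ cannot both cancel against the lower entry and survive to terminate the series. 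The choice is substantive, not bookkeeping: if one instead cancels the repeated parameter outright, the sum runs to $k=\ell_2+1$, Chu--Vandermonde gives $\sR_{\ell_2+1}=(-N+x_1+x_2)_{\ell_2+1}$, and the claimed identity $\sR_{\ell_2+1}=(-N+\ell_2+x_1)\,\sR_{\ell_2}$ is then false. The paper removes the ambiguity by \emph{defining} the value as $\lim_{\ell_3\to\ell_2}$ of the expression with $\ell_3$ generic and the lower parameter $-\ell_2$ fixed; for $\ell_3\neq\ell_2$ the term $k=\ell_2+1$ is genuinely singular rather than indeterminate (this is how Proposition~\ref{prop:4.3} handles $\ell_3>\ell_2$), so the truncation at $k=\ell_2$ is forced, and the termwise limit then produces exactly your truncated sum. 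Once you replace the circular truncation claim by this limiting definition (or some equally principled definition of $\sR_{\ell_2+1}$), your argument for the product identity is complete and coincides with the paper's.
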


\begin{proof}
We consider the case $\sR_{\ell_2+1}$ first, which corresponds to $\nu_1 = 0$ and $\nu_2 = h_{\ell,N}(0) =\ell_2+1$. 
Thus, 
$$
 \sR_{\nu_2}(x; \ell_2,\ell_2,N) = (-N +x_1)_{\ell_2+1}\, {}_3 F_2 \left( \begin{matrix} -\ell_2 -1, -\ell_2, -x_2\\ 
       -\ell_2, - N+ x_1 \end{matrix}; 1\right).
$$
Taking as a limiting case, the ${}_3F_2$ function is given by
\begin{align*}
 \lim_{\ell_3\to \ell_2} {}_3 F_2 \left( \begin{matrix} -\ell_2 -1, -\ell_3, -x_2\\ 
       -\ell_2, - N+ x_1 \end{matrix}; 1\right)
      & =  \sum_{k=0}^{\ell_2} \frac{(-\ell_2-1)_k (-x_2)_k}{k! (-N+x_1)_k} \\
      & = {}_2 F_1 \left( \begin{matrix} -\ell_2 -1,  -x_2\\ 
        - N+ x_1 \end{matrix}; 1\right) - \frac{(-1)^{\ell_2+1} (-x_2)_{\ell_2+1}}{(-N+x_1)_{\ell_2+1}} \\
       &= \frac{(-N+x_1+x_2)_{\ell_2+1}}{(-N+x_1)_{\ell_2+1}} - \frac{(-1)^{\ell_2+1} (-x_2)_{\ell_2+1}}{(-N+x_1)_{\ell_2+1}},
\end{align*}
where the last step follows from Chu-Vandermonde identity. Consequently, 
\begin{align*}
 \sR_{\ell_2+1}(x; \ell_2,\ell_2,N)   =  (-N+x_1+x_2)_{\ell_2 +1} - (-1)^{\ell_2+1} (-x_2)_{\ell_2+1}. 
\end{align*}
If $\ell_2$ is odd, then the righthand side becomes zero if $x_1 = N - 2 x_2$, which shows that this
polynomial contains a factor $N-x_1-2x_2$. Furthermore, rewriting the first Pochhammer symbol in the
righthand side, we obtain 
$$
     \sR_{\ell_2+1}(x; \ell_2,\ell_2,N) = (-1)^{\ell_2+1} \big( (- x_2 -x_1+N-\ell_2)_{\ell_2 +1} - (-x_2)_{\ell_2+1}\big),
$$
which is zero when $x_1 = N-\ell_2$, so that it also contains a factor $N-\ell_2-x_1$. 

The case $\sR_{\ell_2}$ corresponds to $\nu_1 = 1$ and $\nu_2 = \ell_2$. We end up 
with the same ${}_3F_2$ function, 
so that, using $(-N+x_1)_{\ell_2 +1} =(-N+\ell_2+x_1)(-N+x_1)_{\ell_2}$, we obtain
$$
  \sR_{\ell_2+1}(x; \ell_2,\ell_2,N) = (-N+\ell_2+x_1)  \sR_{\ell_2}(x; \ell_2,\ell_2,N). 
$$
Since $ \sR_{\ell+1}(x; \ell_2,\ell_2,N)$ contains a factor $-N+\ell_2+x_1$, this completes the proof. 
\end{proof}
 
Since the first factor of $\sQ_{\nu}(x;\ell,N)$ when $\nu_1=1$, $\nu_2=\ell_2$ and $\ell_3=\ell_2$
is 
$$
  \sQ_1(x_1; \ell_1, 0, N-\ell_2) = \frac{-N+\ell_2 + x_1}{-N + \ell_2}, 
$$ 
we conclude that $\sR_{\ell_2}(x; \ell_2, \ell_2, N)$ is a polynomial of degree $\ell_2$ that
vanishes on $V_{\ell,N}^2 \setminus \{ (N-\ell_2, j): 0 \le j \le \ell_2\}$. This is the polynomial of 
degree 4 in $\sQ_{0,5}(\cdot; \ell,N)$ in Example \ref{ex:example1}. 
 
If $\ell_3 > \ell_2$, then the smallest suitable $\nu_1$ is $\ell_3-\ell_2 +1$, for which $\nu_2 = \ell_2$. 
Consequently, we obtain 
\begin{align*}
 \sR_{\ell_2}(x; \ell_2,\ell_3,N) \, & = (-N +x_1)_{\ell_2}\, {}_3 F_2 \left( \begin{matrix} -\ell_2, -\ell_3-1, -x_2\\ 
       -\ell_2, - N+ x_1 \end{matrix}; 1\right) \\
       &= (-N +x_1)_{\ell_2} \sum_{k=0}^{\ell_2} \frac{(-\ell_3-1)_k (-x_2)_k}{k! (-N+x_1)_k}. 
\end{align*} 
If $\ell_3 - \ell_2$ is small, then the last sum can be made more explicit by writing  it in terms of 
${}_2F_1$, by adding and subtracting a few terms, and then using the Chu-Vandermonde identity, 
which gives 
\begin{align*}
  \frac{\sR_{\ell_2}(x; \ell_2,\ell_3,N)}{(- N+x_1)_{\ell_2}} = 
      \frac{ (-N+x_1+x_2)_{\ell_3+1}}{(-N+x_1)_{\ell_3+1}}
 - \sum_{k=\ell_2+1}^{\ell_3+1} \frac{(-\ell_3-1)_k (-x_2)_k}{k! (-N+x_1)_k}.
\end{align*}
By \eqref{eq:factorQv1A} and \eqref{eq:Hahn2D-2}, the polynomial $\sR_{\ell_2}$ must 
vanishes if $x_1 > N-\ell_2$ or $x_1 < N-\ell_3$, which however is not obvious from the formula. 

The simplest case is when $\ell_3 = \ell_2 +1$, for which $\nu_1 = 2$. We then obtain
\begin{align*}
    \sR_{\ell_2}(x; \ell_2,\ell_2+1,N) \, &= \frac{(-N+x_1+x_2)_{\ell_2+2}}{(-N+\ell_2+x_1)_2} \\
      & - \frac{(-1)^{\ell_2+1} (\ell_2+2)(-x_2)_{\ell_2+1}}{-N+\ell_2+x_1} 
     -\frac{(-1)^{\ell_2+2}  (-x_2)_{\ell_2+2}}{(-N+\ell_2+x_1)_2}.
\end{align*}
Since the first factor of $\sQ_{\nu}(x;\ell,N)$ in this case is a quadratic polynomial
$$
  \sQ_2(x_1; \ell_1, 1, N-\ell_2) = \frac{(N-\ell_2 - x_1)(N-\ell_2-1-x_1)}{(N - \ell_2)(N-\ell_2-1)} 
$$
that vanishes when $x_1 = N-\ell_2$ and $N-\ell_2-1$, it follows that the polynomial 
$\sR_{\ell_2}(x; \ell_2, \ell_2+1, N)$ is a polynomial of degree $\ell_2$ that vanishes on 
$$
V_{\ell,N}^2 \setminus (\{ (N-\ell_2, j): 0 \le j \le \ell_2\} \cup \{ (N-\ell_2-1, j): 0 \le j \le \ell_2\}).
$$ 
This is not, however, obvious from the explicit formula of $\sR_{\ell_2}(x; \ell_2, \ell_2+1, N)$ given above. 

We also observe that $V_{\ell,N}^2$ depends on $\ell_1$, whereas $\sR_{\ell_2}(x; \ell_2, \ell_2+1, N)$
is independent of $\ell_1$. Consequently, the polynomial $\sR_{\ell_2}(x; \ell_2, \ell_2+1, N)$ vanishes on a 
large number of lattice points. 

We end this subsection by making a conjecture on the irreducibility of the polynomial $\sR_{\nu_2}$, which we 
state more generally for the polynomial 
\begin{align*}
R_n(x; \ell_1,\ell_2, y) &:= (-y)_n  \sQ_n(x; \ell_1,\ell_2, y)\nonumber\\
& = (-y)_n\sum_{k=0}^{n} \frac{(-n)_k(n-\ell_1-\ell_2-1)_k(-x)_k}{(-\ell_1)_k\,k!\,(-y)_{k}}\nonumber\\
& = \sum_{k=0}^{n} \frac{(-n)_k(n-\ell_1-\ell_2-1)_k(-x)_k(-y+k)_{n-k}}{(-\ell_1)_k\,k!}.
\end{align*}
This is a well-defined polynomial in $\QQ[x,y]$ when $n\leq \ell_1 \wedge \ell_2$.

\begin{conj} \label{Conj}
For $n\in\NN$ and $\ell_1,\ell_2\in\NN$ such that $n\leq \ell_1 \wedge \ell_2$, the polynomial 
$R_n(x; \ell_1,\ell_2, y) $ is irreducible unless $\ell_1=\ell_2$ and $n$ is odd, in which case 
$R_n(x; \ell_1,\ell_2, y) $ is the product of $(y-2x)$ and an irreducible polynomial of degree $n-1$.
\end{conj}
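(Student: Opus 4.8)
The plan is to reduce the two--variable statement to a one--variable irreducibility and then attack the latter arithmetically. First I record the bidegree of $R_n$. Every summand has total degree $n$; the coefficient of $y^n$ arises only from $k=0$ and equals the constant $(-1)^n$, while the coefficient of $x^n$ arises only from $k=n$ and equals $(-1)^n(-n)_n(n-\ell_1-\ell_2-1)_n/\big((-\ell_1)_n\,n!\big)$, which is nonzero precisely because $n\le\ell_1\wedge\ell_2$. Hence $\deg_x R_n=\deg_y R_n=n$ with nonzero \emph{constant} leading coefficients in each variable, so $R_n$ is primitive in both $\QQ[x][y]$ and $\QQ[y][x]$. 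By Gauss's lemma it is irreducible in $\QQ[x,y]$ if and only if it is irreducible in $\QQ(x)[y]$. Moreover, since the leading coefficient in $y$ is a nonzero constant, in any factorization $R_n=gh$ over $\QQ[x,y]$ both factors have $\deg_y\ge1$ with unit leading $y$--coefficients; consequently it is enough to produce a single $x_0\in\QQ$ for which $R_n(x_0;\ell_1,\ell_2,y)\in\QQ[y]$ is irreducible of degree $n$, for then a factorization of $R_n$ would specialize to a nontrivial factorization of $R_n(x_0;\ell_1,\ell_2,y)$.

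Next I account for the exceptional factor. The top--degree form of $R_n$ is
\[
  F_n(x,y)=(-1)^n\,y^n\,G_n^{(\ell_1,\ell_2)}\!\Big(\tfrac{y-2x}{y}\Big),
\]
a nonzero constant multiple of $\prod_{j=1}^n\big((1-t_j)y-2x\big)$, where $t_1,\dots,t_n$ are the roots of $G_n^{(\ell_1,\ell_2)}$; since $G_n^{(\ell_1,\ell_2)}(1)=1$ we have $t_j\neq1$, so each linear form genuinely involves $y$. The form contains $y-2x$ exactly when $0$ is a root, i.e. $G_n^{(\ell_1,\ell_2)}(0)=0$; by \eqref{eq:Gn(-t)} this forces $\ell_1=\ell_2$, and then $G_n^{(\ell_1,\ell_2)}$ has the parity of $n$, so $G_n^{(\ell_1,\ell_2)}(0)=0$ iff $n$ is odd. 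That $R_n$ itself (not merely $F_n$) is divisible by $y-2x$ in this case follows from the reflection \eqref{eq:sQHahn3}: multiplying it by $(-y)_n$ gives $R_n(x;\ell_1,\ell_2,y)=(-1)^nR_n(y-x;\ell_2,\ell_1,y)$, which at $\ell_1=\ell_2$ becomes $R_n(x)=(-1)^nR_n(y-x)$, and for odd $n$ the substitution $x=y/2$ yields $R_n(y/2)=0$, whence $(y-2x)\mid R_n$. Writing $R_n=(y-2x)S$, the cofactor $S$ has degree $n-1$, is symmetric under $x\mapsto y-x$, and has leading form $F_n/(y-2x)$. The conjecture thus reduces to two irreducibility claims of the same type: $R_n$ in the non-exceptional case, and $S$ (degree $n-1$) in the exceptional case.

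For the core irreducibility I would establish that a suitable fiber is irreducible by a Dumas--Eisenstein (Newton--polygon) argument at a rational prime $p$, in the spirit of Schur's and Filaseta--Lam's irreducibility theorems for classical orthogonal and Hahn polynomials. Fixing $x_0\in\QQ$, write $R_n(x_0;\ell_1,\ell_2,y)=\sum_j a_j y^j$ with $a_n=(-1)^n$; the $a_j$ are explicit $\QQ$--combinations of the products $c_k(-x_0)_k$ with $c_k=(-n)_k(n-\ell_1-\ell_2-1)_k/\big((-\ell_1)_k\,k!\big)$, so their $p$--adic valuations are controlled by the factorials and Pochhammer symbols appearing there. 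The aim is, for each admissible $(n,\ell_1,\ell_2)$, to choose $p$ and $x_0$ so that the Newton polygon of $\sum_j a_j y^j$ with respect to $v_p$ is a single segment whose slope has denominator $n$ (respectively $n-1$ for $S$); this makes the local extension totally ramified of full degree, forcing irreducibility over $\QQ$, and the first paragraph then upgrades this to irreducibility over $\QQ(x)$ and hence in $\QQ[x,y]$. I would specialize the variable $x$ rather than $y$ on purpose: by Theorem \ref{thm:factor1D} and the Examples following it, the fibers at integer values of $y$ are genuine Hahn polynomials in $x$ that are frequently reducible, whereas fixing $x$ yields a dual--Hahn type polynomial in $y$ carrying no such built--in factorization.

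The principal obstacle is the \emph{uniform} construction of the prime $p$ and the value $x_0$. The coordinate fibers are useless: $R_n(0;\ell_1,\ell_2,y)=(-y)_n$ and $R_n(x;\ell_1,\ell_2,0)=x\cdot(\text{lower order})$ both split completely over $\QQ$, so no coordinate line is totally ramified and plain Eisenstein at $(x)$ or $(y)$ fails; equivalently, the degree--$n$ cover $\{R_n=0\}\to\mathbb A^1_x$ admits no totally ramified coordinate fiber, and one must locate a nontrivial totally ramified prime. Producing the required single--slope polygon amounts to controlling the prime factorizations of blocks of consecutive integers hidden in the Pochhammer symbols, uniformly in the two parameters $\ell_1,\ell_2$ and in $n$; in the classical one--parameter families this is exactly where Sylvester--Erd\H{o}s type results on prime divisors in short intervals (and Bertrand's postulate) enter, and adapting them to the present two--parameter family, together with the separate treatment of the degree--$(n-1)$ cofactor $S$ in the symmetric case (where the reflection $x\mapsto y-x$ has already been consumed), is the crux and the reason the assertion is stated only as a conjecture.
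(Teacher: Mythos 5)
The statement you are treating is stated in the paper as Conjecture \ref{Conj}: the paper offers no proof of it, and your proposal does not prove it either. Your preliminary reductions are correct and match what the paper does establish. The bidegree computation, primitivity, and the Gauss-lemma reduction to the irreducibility of a single fiber $R_n(x_0;\ell_1,\ell_2,y)$ are sound, and your derivation of the factor $y-2x$ in the case $\ell_1=\ell_2$, $n$ odd, is exactly the paper's own observation via the reflection identity (note that you dropped the constant $\frac{(-\ell_2)_n}{(-\ell_1)_n}$ from that identity, though it equals $1$ in the only case where you use it). However, the heart of the conjecture --- irreducibility of $R_n$ in the non-exceptional case and of the cofactor $S$ in the exceptional case --- appears in your write-up only as a program: choose a prime $p$ and a rational $x_0$ so that the $p$-adic Newton polygon of the fiber is a single segment of slope with denominator $n$ (resp.\ $n-1$). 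You acknowledge yourself that the uniform construction of such $p$ and $x_0$ is ``the crux,'' and nothing in the proposal carries it out, even for a restricted family of $(n,\ell_1,\ell_2)$. That missing step is precisely the open problem; the paper makes the same point when it remarks that Eisenstein-type arguments over $\QQ[x][y]$ fail here because $R_n$ expands naturally in Pochhammer symbols rather than in powers of $x$ or $y$. So what you have is a correct reduction plus a plausible strategy in the spirit of Schur and Filaseta--Trifonov, not a proof.

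One smaller logical flaw: you assert that $G_n^{(\ell_1,\ell_2)}(0)=0$ ``forces $\ell_1=\ell_2$'' by \eqref{eq:Gn(-t)}. That identity only relates $G_n^{(\ell_1,\ell_2)}(0)$ to $G_n^{(\ell_2,\ell_1)}(0)$ and by itself rules out nothing; deciding whether ${}_2F_1\bigl(-n,\,n-\ell_1-\ell_2-1;\,-\ell_1;\,\tfrac12\bigr)$ can vanish for $\ell_1\ne\ell_2$ is a separate (Kummer-type evaluations apply only when $\ell_1=\ell_2$) and apparently nontrivial question. This slip does not affect the part of your argument that works --- divisibility of $R_n$ by $y-2x$ rests solely on the reflection identity --- but your description of exactly when the leading form contains $y-2x$ is unproven as stated.
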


For all $n \le \ell_1\wedge \ell_2$, the identity 
$$
R_n(x; \ell_1,\ell_2, y)=(-1)^n \frac{(-\ell_2)_n}{(-\ell_1)_n} R_n(y-x; \ell_2,\ell_1, y)
$$
holds, which shows that that $R_n(x; \ell_1,\ell_2, y)$ has a factor $y-2x$ if $\ell_1=\ell_2$ and $n$ is odd.
The conjecture states that the polynomial is irreducible apart from this trivial factor. 

The conjecture can be  naturally viewed as a two-dimensional analog of irreducibility results for classical orthogonal polynomials, which have a long history. Indeed, a famous result by Hilbert asserts that there exist  irreducible polynomials of every degree $n$ over $\QQ$ having the largest possible Galois group $S_n$. While Hilbert’s proof was nonconstructive, Schur provided a rather explicit example by proving that the $n$-th Laguerre polynomial is irreducible and has Galois group $S_n$ over $\QQ$. In the early 50’s, Grosswald conjectured the irreducibility of the Bessel polynomials, which was proved in \cite{FT}. 

It is worth noting that, in general, irreducibility results for polynomials of two variables are easier to prove, since $\QQ[x,y]=\QQ[x][y]$ and we can try to use the Eisenstein criterion for the unique factorization domain $\QQ[x]$. This is sometimes the ``easy step" in the classical one-dimensional results, see for instance Proposition 3.1 in \cite{CHS}. However, these arguments do not work in our case, since the expansion is in Pochhammer terms, and not in ordinary powers of $x$ or $y$. 

\section{Generating function}
\setcounter{equation}{0} 
 
For Hahn polynomials on lattice points in the simplex, the generating function is a useful tool and plays
an essential role in \cite{IX17, X15}. In view of the generating function \eqref{eq:generatingHahnQ}, one
would expect that there is a generating function for $\sQ_\nu(\cdot;\ell,N)$ given in terms of the Jacobi polynomial 
on the simplex with negative integer parameters. Using the normalized Hahn polynomials
$$
\sH_\nu(\a; \ell, N) = H_\nu(\a; - \ell - \one, N), \qquad \a \in \NN_0^{d+1},
$$
see \eqref{eq:HahnH}, one such extension holds straightforwardly. 

\begin{prop}
Let $\ell_i$ and $N$ are positive integers, so that $\ell_i \le N$ and \eqref{eq:l-condition} holds. 
For $\nu \in \NN_0^d$, define 
$$
  G_\nu(x):= G_\nu^\ell(x) =  \prod_{j=1}^d \left(1-|\xb_{j-1}| \right)^{\nu_j}  G_{\nu_j}^{(\za_j,\ell_j)}\left 
  (\frac{2x_j}{1-|\xb_{j-1}|} -1\right), 
$$
where $\za_j$ is defined in \eqref{eq:saj}. Then, for $\nu\in H_{\ell,N}^d$, the Hahn 
polynomials $\sH_\nu(\cdot; \ell, N)$ satisfy 
\begin{equation}\label{eq:Hahngenfunc2}
 G_{\nu,N}(y) = |y|^N G_\nu\Big ( \f {y'} {|y|} \Big)
   =   \sum_{|\alpha| = N} \frac{N!}{\alpha!}\sH_\nu(\alpha; \ell,N)y^\alpha.
\end{equation}
\end{prop}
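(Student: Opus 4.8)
The plan is to obtain \eqref{eq:Hahngenfunc2} by analytic continuation in $\kappa$ of the classical generating function \eqref{Hahngenfunc}, specialized at $\kappa = -\ell - \one$. Two identifications reduce the claim to this specialization. First, by the definition of $\sH_\nu$ we have $\sH_\nu(\cdot;\ell,N) = H_\nu(\cdot; -\ell-\one, N)$, so the coefficients on the right of \eqref{eq:Hahngenfunc2} are exactly those of \eqref{Hahngenfunc} at $\kappa = -\ell-\one$. Second, I would check that $G_\nu = P_\nu^{-\ell-\one}$, which identifies the generating polynomial $G_{\nu,N}$ with the specialization of $P_{\nu,N}$. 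Granting both, \eqref{eq:Hahngenfunc2} is precisely \eqref{Hahngenfunc} evaluated at $\kappa = -\ell-\one$.

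The equality $G_\nu = P_\nu^{-\ell-\one}$ is a factor-by-factor comparison of the products defining $P_\nu$ in \eqref{eq:Pnu} and $G_\nu$. Setting $\kappa_j = -\ell_j - 1$ and recalling from \eqref{eq:saj} that $a_j(-\ell-\one,\nu) = -\za_j - 1$, the $j$-th Jacobi factor $P_{\nu_j}^{(a_j,\kappa_j)}$ becomes $P_{\nu_j}^{(-\za_j - 1, -\ell_j - 1)}$, with normalization $P_{\nu_j}^{(a_j,\kappa_j)}(1) = (a_j+1)_{\nu_j}/\nu_j! = (-\za_j)_{\nu_j}/\nu_j!$; this is exactly the normalized Jacobi factor defining $G_{\nu_j}^{(\za_j,\ell_j)}$. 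Hence each factor of $P_\nu^{-\ell-\one}$ coincides with the corresponding factor of $G_\nu$, giving $P_\nu^{-\ell-\one} = G_\nu$ and $P_{\nu,N}|_{\kappa=-\ell-\one} = G_{\nu,N}$. The well-definedness of $G_\nu$ itself rests on $\nu_j \le \za_j \wedge \ell_j$, which holds for $\nu \in H_{\ell,N}^d$.

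The substance of the argument, and the step I expect to be the main obstacle, is justifying the continuation. For fixed $\nu$, $N$, and each multi-index $\alpha$ with $|\alpha| = N$, the coefficient of $y^\alpha$ on each side of \eqref{Hahngenfunc} is a rational function of $\kappa$, and the two coefficients agree on the nonempty open set $\{\kappa_i > -1\}$; by the identity theorem they agree as rational functions and may be evaluated at any common regular point. It thus remains to verify that $\kappa = -\ell - \one$ is regular when $\nu \in H_{\ell,N}^d$. The only denominators that can vanish are $(a_j + 1)_{\nu_j} = (-\za_j)_{\nu_j}$ from the normalizations $P_{\nu_j}^{(a_j,\kappa_j)}(1)$, the factors $(\kappa_j + 1)_{\nu_j} = (-\ell_j)_{\nu_j}$ occurring in the ${}_3F_2$ of each $Q_{\nu_j}$ and in the prefactor of \eqref{eq:HahnH}, and $(-N)_{|\nu|}$; these are all nonzero precisely because $\nu_j \le \za_j$, $\nu_j \le \ell_j$ and $|\nu| \le N$, the defining inequalities of $H_{\ell,N}^d$. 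Finally, paralleling the one-variable treatment in Section~2, I would confirm that the polynomials $P_{\nu_j}^{(-\za_j-1,-\ell_j-1)}$ suffer no degree reduction: by \cite[(4.22.3)]{Sz} such reduction requires $\nu_j + m + a_j + \kappa_j = 0$ for some integer $1 \le m \le \nu_j$, i.e.\ $m = \za_j + \ell_j + 2 - \nu_j$, whereas $\nu_j \le \za_j$ and $\nu_j \le \ell_j$ give $2\nu_j \le \za_j + \ell_j < \za_j + \ell_j + 2$, forcing $m > \nu_j$. With no poles and no degree collapse, specializing \eqref{Hahngenfunc} at $\kappa = -\ell - \one$ yields \eqref{eq:Hahngenfunc2}.
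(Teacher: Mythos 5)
Your proposal is correct and follows essentially the same route as the paper, whose proof is the single observation that \eqref{eq:Hahngenfunc2} follows from \eqref{Hahngenfunc}, a finite sum for $\nu \in H_{\ell,N}^d$, by setting $\kappa_i = -\ell_i - 1$. Your write-up simply supplies the details the paper leaves implicit: the factor-by-factor identification $G_\nu = P_\nu^{-\ell-\one}$ via $a_j(-\ell-\one,\nu) = -\za_j-1$, and the verification that the defining inequalities of $H_{\ell,N}^d$ make $\kappa = -\ell-\one$ a regular point for the rational-in-$\kappa$ coefficients, so the analytic continuation is legitimate.
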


This follows from the identity \eqref{Hahngenfunc}, which is a finite sum for $\nu \in H_{\ell,N}^d$, by
setting $\k_i = - \ell_i -1$. The sum in the right-hand side is over all lattice points in $V_N^d$, which 
contains $V_{\ell,N}^d$ as a subset. One may ask if this is a multidimensional analog of the generating
function in Proposition \ref{prop:GenF1d} so that the right-hand side of \eqref{eq:Hahngenfunc2}
is summed over lattice points in $V_{\ell,N}^d$. However, this does not look to be possible, since for the factor
$\sQ_{\nu_j}(x_j;  \ell_j, \za_j, \wh N)$ of $\sH_\nu(\cdot; \ell,N)$, where $\wh N = N- |\xb_{j-1}|-|\bnu^{j+1}|$
and $j \ge 2$, both $\ell_j \wedge \wh N$ and $\za_j \wedge \wh N$ depend on $|\xb_{j-1}|$ for 
$x \in V_{\ell,N}^d$.

The generating function  \eqref{Hahngenfunc} is used to derive properties of the Hahn polynomials on
$V_N^d$ in \cite{X15}. Some of those properties remain valid when the parameters are negative integers. 
Of particular interests are those on the reproducing kernel of the Hahn polynomials of degree $n$, defined by 
$$
  P_n(W_{\k,N}; x,y) =  \sum_{|\nu|=n} \frac{Q_\nu(x;\k, N) Q_\nu(y;\k, N)}{B_\nu(\k,N)}, \qquad 0 \le n \le N.
$$
For the Hahn polynomials with negative integer parameters, the corresponding kernel is defined by 
$$
  \sP_n (\sH_{\ell,N}; x,y) = \sum_{\substack{|\nu|=n \\ \nu \in H_{\ell,N}^d}} 
      \frac{\sQ_\nu(x;\ell, N)\sQ_\nu(y;\ell, N)}{\sB_\nu(\ell,N)}, \qquad 0 \le n \le N.
$$
The space $\CV_n^d(\sH_{\ell,N})$ is of the dimension $\# \{\nu \in H_{\ell,N}^d: |\nu| =n\}$, which is less 
than $\# \{\nu \in \NN_0^d: |\nu| =n\}$ for $n$ large. However, let 
$$
\ell_{\rm min} :=\min \{\ell_i: 1 \le i \le d+1\}.
$$

\begin{lem}\label{eq:indexl_min}
The set $\{\nu \in H_{\ell,N}^d: |\nu| = n\}$ coincides with $\{\nu \in \NN_0^d: |\nu| =n\}$ if and only if 
$0 \le n \le \ell_{\rm min}$. 
\end{lem}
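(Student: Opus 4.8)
The inclusion $\{\nu \in H_{\ell,N}^d : |\nu| = n\} \subseteq \{\nu \in \NN_0^d : |\nu| = n\}$ is automatic, so the two sets coincide precisely when the reverse inclusion holds, i.e. when \emph{every} $\nu \in \NN_0^d$ with $|\nu| = n$ satisfies all four constraints defining $H_{\ell,N}^d$: namely $|\nu| \le N$, $|\nu| \le |\ell| - N$, $\nu_j \le \ell_j$, and $\nu_j \le \za_j$. The plan is to prove the two implications separately: that $n \le \ell_{\rm min}$ forces all of these, and that $n \ge \ell_{\rm min} + 1$ lets one build an explicit $\nu$ violating one of them.

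For the forward (if) direction, fix any $\nu$ with $|\nu| = n \le \ell_{\rm min}$. The bounds $|\nu| = n \le \ell_{\rm min} \le N$ and $\nu_j \le |\nu| = n \le \ell_{\rm min} \le \ell_j$ dispose of the first and third constraints at once. For the second constraint I would use \eqref{eq:l-condition}: since there are at least two indices distinct from a minimizer $i_0$ of $\ell$, whose $\ell$-values sum to at least $N$, one gets $|\ell| - \ell_{\rm min} \ge N$ and hence $n \le \ell_{\rm min} \le |\ell| - N$. The fourth constraint is the delicate one, because $\za_j = |\bell^{j+1}| - 2|\bnu^{j+1}|$ depends on $\nu$ itself; here I would rewrite the target inequality $\nu_j \le \za_j$ as $\nu_j + 2|\bnu^{j+1}| \le |\bell^{j+1}|$, and bound the left side via the telescoping identity $\nu_j + 2|\bnu^{j+1}| = |\bnu^j| + |\bnu^{j+1}| \le 2|\nu| \le 2\ell_{\rm min}$. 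For $j \le d-1$ the sum $|\bell^{j+1}|$ contains at least the two entries $\ell_d,\ell_{d+1}$, each $\ge \ell_{\rm min}$, so $|\bell^{j+1}| \ge 2\ell_{\rm min}$ and the inequality holds; the boundary case $j = d$ reduces to $\nu_d \le \ell_{d+1}$, which follows directly from $\nu_d \le n \le \ell_{\rm min}$.

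For the reverse (only if) direction, assume $n \ge \ell_{\rm min} + 1$ and let $i_0$ be an index with $\ell_{i_0} = \ell_{\rm min}$; I would exhibit a single $\nu$ with $|\nu| = n$ lying outside $H_{\ell,N}^d$. If $i_0 \le d$, set $\nu_{i_0} = \ell_{\rm min} + 1$ (legitimate since $\ell_{\rm min} + 1 \le n$) and complete $\nu$ arbitrarily to total $n$; then $\nu_{i_0} > \ell_{i_0}$ violates the third constraint. If instead the minimum is attained only at $i_0 = d+1$, take $\nu = (0,\dots,0,n)$; then for $j = d$ one has $\za_d = \ell_{d+1} = \ell_{\rm min}$, so $\nu_d = n > \za_d$ violates the fourth constraint. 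Either way the two index sets differ, which completes the characterization. The main obstacle is precisely this fourth family of constraints: unlike the others, the bound $\za_j$ is not a fixed number but varies with $\nu$, so the argument must exploit the telescoping identity above together with the pairwise hypothesis \eqref{eq:l-condition} to control $|\bell^{j+1}|$ from below; correspondingly, the only-if direction must separately treat the case where $\ell_{\rm min}$ is realized only by the coordinate-free index $d+1$, which is the one index with no matching $\nu$-component to force directly.
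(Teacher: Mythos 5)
Your proof is correct and follows essentially the same route as the paper's: the forward direction via the telescoping bound $\nu_j + 2|\bnu^{j+1}| = |\bnu^j| + |\bnu^{j+1}| \le 2n \le |\bell^{j+1}|$ to handle $\nu_j \le \za_j$, and the reverse direction via a coordinate-concentrated witness once $n > \ell_{\rm min}$. If anything, you are more careful than the paper, which silently omits checking the constraints $|\nu| \le N$ and $|\nu| \le |\ell| - N$ (where, as you note, the pairwise hypothesis \eqref{eq:l-condition} and the existence of at least two indices besides the minimizer, i.e.\ $d \ge 2$, are genuinely needed), and which dismisses the only-if direction with ``assume, say, $\ell_{\rm min} = \ell_1$,'' glossing over the case you treat separately where the minimum occurs only at $\ell_{d+1}$, for which $\nu = (n,0,\ldots,0)$ can still lie in $H_{\ell,N}^d$ and the correct witness is $\nu = (0,\ldots,0,n)$ violating $\nu_d \le \za_d = \ell_{d+1}$.
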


\begin{proof}
If $0 \le n \le \ell_{\rm min}$ and $|\nu| = n$, then $\nu_j \le n \le \ell_{\rm min}$ so that 
$\nu_j \le \ell_j$ follows trivially. Moreover, $|\nu^j| + |\nu^{j+1}| \le 2 n \le |\ell^{j+1}|$ holds if 
$j \le d-1$, which implies $\nu_j \le \za_j$ for $1 \le j \le d-1$, whereas $\za_d = \ell_{d+1}$, so that
$\nu_d \le \za_d$ holds as well. This shows that $\{\nu \in \NN_0^d: |\nu| =n\} \subset
\{\nu \in H_{\ell,N}^d: |\nu| = n\}$, while the other direction of inclusion is trivial. Hence, the two
sets are equal. If $n > \ell_{\rm min}$ and assume, say, $\ell_{\rm min} = \ell_1$, then the element
$\nu = (n,0,\ldots,0)$ does not belong to $\{\nu \in H_{\ell,N}^d: |\nu| = n\}$ as $\nu_1 = n > \ell_1$. 
This completes the proof. 
\end{proof} 

For $0 \le n \le \ell_{\rm min}$, we can then write the sum of $\sP_n (\sH_{\ell,N};\cdot,\cdot)$ as over 
$\{\nu: |\nu| = n\}$. In this way, it is easy to see that the kernel agrees with $P_n(W_{\k,N}; \cdot,\cdot)$ 
when $\k_j = -\ell_j-1$. In particular, by \cite[Theorem 4.3]{X15}, we can rewrite $\sP_n(\sH_{\ell,N})$ 
in terms of elementary function
$$
  \CE_k (x,y; \ell) :=  \sum_{\substack{|\g|= k \\ \g_i \le \ell_i}}   \frac{(-X)_{\g}(-Y)_\g}{(-\ell)_\g \g!}, \quad  
       x,y \in V_{\ell,N}^d,
$$         
where $X= (x,N-|x|)$, $Y= (y, N-|y|)$ and $\g \in \NN_0^{d+1}$, and we have used the notation 
$(a)_\g =(a_1)_{\g_1} \cdots (a_{d+1})_{\g_{d+1}}$ for $a \in \RR^{d+1}$. 

\begin{prop} \label{thm:PnH}
Let $\ell_i$ be nonnegative integers, $\ell_i \le N$ and $\ell_i+\ell_j \ge N$. Then, for $x , y \in V_{\ell,N}^d$ 
and $0 \le n \le \ell_{\rm min}$, 
\begin{align}\label{PnH-closed}
     \sP_n(\sH_{\ell, N}; x,y) = & \frac{(-N)_n (-|\ell|)_N (-|\ell|)_n (|\ell| +1-2n) }{ n!(-|\ell|)_{N+n}(|\ell|+1-n)} \\
    & \times  \sum_{k=0}^n \frac{(-n)_k (n-1-|\ell|)_k}{(-N)_k(-N)_k} \CE_{k} (x,y;\ell). \notag
\end{align}
\end{prop}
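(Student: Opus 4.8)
The plan is to reduce the claim to the corresponding identity for the classical Hahn reproducing kernel $P_n(W_{\k,N})$ with $\k = -\ell - \one$, and then to invoke the closed form established in \cite[Theorem 4.3]{X15}. The starting point is Lemma \ref{eq:indexl_min}: for $0 \le n \le \ell_{\rm min}$ the set $\{\nu \in H_{\ell,N}^d: |\nu|=n\}$ coincides with $\{\nu \in \NN_0^d: |\nu|=n\}$, so the restricted sum defining $\sP_n(\sH_{\ell,N})$ may be replaced by the full sum over $|\nu|=n$. Since $\sQ_\nu(\cdot;\ell,N) = Q_\nu(\cdot; -\ell-\one, N)$ by definition, and $\sB_\nu(\ell,N) = B_\nu(-\ell-\one,N)$ by comparing \eqref{eq:Bnu} with the formula in Proposition \ref{prop:Hahn1} (using $\l_\k = |\k|+d+1 = -|\ell|$ and $a_j+1 = -\za_j$), this full sum is exactly $P_n(W_{\k,N}; x,y)$ evaluated at $\k = -\ell - \one$. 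For this range of $n$ every norm $\sB_\nu(\ell,N)$ is nonzero, so no term develops a pole under the substitution.

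Next I would record the closed form of \cite[Theorem 4.3]{X15}, which expresses $P_n(W_{\k,N})$ as a constant multiple of $\sum_{k=0}^n \frac{(-n)_k (n+\l_\k-1)_k}{(-N)_k(-N)_k}\, E_k(x,y;\k)$, where $E_k$ is the elementary sum $\sum_{|\g|=k}\frac{(-X)_\g(-Y)_\g}{(\k+\one)_\g\, \g!}$, with $X=(x,N-|x|)$ and $Y=(y,N-|y|)$. Both sides of that identity are rational functions of the parameters $\k$; since they agree for all $\k_i>-1$, they agree as rational functions, and I may therefore substitute $\k_i = -\ell_i-1$ provided the relevant denominators do not vanish, which is guaranteed by the nonvanishing of the norms for $0 \le n \le \ell_{\rm min}$.

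It then remains to identify the pieces after the substitution. With $\k_i+1 = -\ell_i$ one has $(\k+\one)_\g = (-\ell)_\g$, and the factor $(-\ell_i)_{\g_i}$ vanishes for $\g_i>\ell_i$; this automatically truncates $E_k(x,y;-\ell-\one)$ to the sum over $\g_i\le\ell_i$, so that $E_k(x,y;-\ell-\one) = \CE_k(x,y;\ell)$. Setting $\l_\k=-|\ell|$ gives $(n+\l_\k-1)_k = (n-1-|\ell|)_k$, matching the inner coefficient in \eqref{PnH-closed}. Finally I would compute the outer constant: the classical normalization carries the factors $(\l_\k)_N$, $(\l_\k)_n$, $(\l_\k)_{N+n}$, $(\l_\k+2n-1)$ and $(\l_\k+n-1)$, which become $(-|\ell|)_N$, $(-|\ell|)_n$, $(-|\ell|)_{N+n}$, $-(|\ell|+1-2n)$ and $-(|\ell|+1-n)$ respectively; the two sign changes cancel in the ratio, producing exactly the constant displayed in \eqref{PnH-closed}.

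The main obstacle is bookkeeping rather than conceptual: one must confirm that \cite[Theorem 4.3]{X15} is stated (or can be recast) so that its parameter dependence is through rational functions whose denominators survive at $\k=-\ell-\one$ for $0 \le n \le \ell_{\rm min}$, and then track the signs in the normalizing constant carefully. The essential structural input, namely that the restriction to $H_{\ell,N}^d$ is invisible in this degree range, is supplied by Lemma \ref{eq:indexl_min}, after which the result is a transcription of the classical formula under analytic continuation in the parameters.
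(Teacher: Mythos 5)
Your proposal follows the paper's own route exactly: Lemma~\ref{eq:indexl_min} removes the restriction to $H_{\ell,N}^d$ in the degree range $0\le n\le\ell_{\rm min}$, the resulting full sum is identified with $P_n(W_{\k,N};x,y)$ at $\k=-\ell-\one$ (with $\l_\k=-|\ell|$ and $a_j+1=-\za_j$), and the closed form of \cite[Theorem 4.3]{X15} is then transcribed by analytic continuation in the parameters. The paper's proof of the proposition is precisely this discussion, so in structure and in the constant bookkeeping your argument is correct.

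One justification, however, is backwards and should be fixed. You claim that $E_k(x,y;-\ell-\one)$ truncates to $\CE_k(x,y;\ell)$ because ``the factor $(-\ell_i)_{\g_i}$ vanishes for $\g_i>\ell_i$.'' But $(\k+\one)_\g=(-\ell)_\g$ sits in the \emph{denominator} of $E_k$: its vanishing would create a pole, not kill the term, and a direct substitution $\k=-\ell-\one$ in those terms yields an indeterminate $0/0$. What actually truncates the sum is the numerator. For $x,y\in V_{\ell,N}^d$ one has $X_i=x_i\le\ell_i$ for $1\le i\le d$ and $X_{d+1}=N-|x|\le\ell_{d+1}$ (and likewise for $Y$), so $(-X_i)_{\g_i}=0$ whenever $\g_i>\ell_i\ge X_i$. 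Hence every term with some $\g_i>\ell_i$ vanishes identically for all generic $\k$, and in the limit $\k\to-\ell-\one$ the surviving terms have denominators $(-\ell_i)_{\g_i}$ with $\g_i\le\ell_i$, which are nonzero; this is why $E_k(x,y;-\ell-\one)=\CE_k(x,y;\ell)$ and, equally, why the hypothesis $x,y\in V_{\ell,N}^d$ cannot be dropped from the statement. With that correction the proof is complete and matches the paper's.
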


If $n > \ell_{\rm min}$, then $\{\nu: |\nu| = n\}$ contains $\nu$ outside of $H_{\ell,N}^d$, so that some 
of $\sQ_{\nu}(\cdot; \ell, N)$ with $|\nu| = n$ vanishes on $V_{\ell,N}^d$ by Theorem \ref{thm:CH_ellN}. One 
may ask if it is possible to include such polynomials in the summation of $\sP_n(\sH_{\ell, N}; \cdot,\cdot)$
so that some version of \eqref{PnH-closed} can be deduced from that of $P_n(W_{\k,N}; \cdot,\cdot)$. 
The answer, however, is negative since the norm of such polynomials, $B_\nu(\ell,N)$, is necessarily 
zero and, as a result, no such terms can be included in the sum of $\sP_n(\sH_{\ell, N}; \cdot,\cdot)$. 

There is an exceptional case when $d =2$, for which Hahn polynomials of negative integer parameters of
degree $n$ have full range for all suitable $n$. This is the case when $\ell_1 = \ell_2  = \ell_3 =\ell$ and 
$N  = 2 \ell$, so that the set $V_{\ell, N}^2$ consists of lattice points in a regular triangle, whereas the set
$H_{\ell,N}^2$ is given below. 

\begin{prop}\label{prop:d=2triangle}
Let $\ell$ be a positive integer. If $d =2$, $\ell_i = \ell$ for $ 1 \le i \le 3$ and $N = 2 \ell$, then 
$$
     H_{\ell, N}^2  = \{\nu \in \NN_0^2:  |\nu| \le \ell\}.
$$
\end{prop}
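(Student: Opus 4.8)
The plan is to substitute the special parameters directly into the explicit description of $H_{\ell,N}^2$ displayed just above \lemref{lem:height} and check that the system of defining inequalities collapses to the single condition $|\nu|\le\ell$. With $\ell_1=\ell_2=\ell_3=\ell$ and $N=2\ell$ one has $|\ell|=3\ell$, hence $|\ell|-N=\ell$ and $\ell_2+\ell_3=2\ell$. The six constraints defining $H_{\ell,N}^2$ thus become $\nu_1+\nu_2\le 2\ell$, $\nu_1+\nu_2\le\ell$, $\nu_1\le\ell$, $\nu_2\le\ell$ (from both $\nu_2\le\ell_2$ and $\nu_2\le\ell_3$), and $\nu_1+2\nu_2\le 2\ell$.

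For the inclusion $H_{\ell,N}^2\subseteq\{\nu:|\nu|\le\ell\}$ I would simply read off the second inequality $\nu_1+\nu_2\le|\ell|-N=\ell$, which is exactly the asserted bound. For the reverse inclusion I would assume $|\nu|=\nu_1+\nu_2\le\ell$ and verify each remaining constraint in turn: $\nu_1\le\ell$ and $\nu_2\le\ell$ follow from $\nu_i\le|\nu|\le\ell$; the bound $\nu_1+\nu_2\le 2\ell$ is weaker than $|\nu|\le\ell$; and the only inequality requiring a one-line argument is $\nu_1+2\nu_2\le 2\ell$, which follows from $\nu_1+2\nu_2=|\nu|+\nu_2\le\ell+\ell=2\ell$. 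This establishes both inclusions and hence the claimed equality.

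There is no genuine obstacle here, so this is essentially a direct verification rather than a proof with a hard step. The one conceptual remark worth recording is that the constraint $\nu_1+2\nu_2\le\ell_2+\ell_3$ (equivalently $\nu_1\le\za_1$), which in general is the binding condition slicing the simplex $V_N^d$ down to the polyhedron and giving $H_{\ell,N}^d$ its complicated shape, becomes \emph{redundant} in the fully symmetric case $N=2\ell$; this is precisely why the index set degenerates back to a full simplex $\{|\nu|\le\ell\}$. One could alternatively argue through the height function of \lemref{lem:height}, evaluating $h_{\ell,N}(\nu_1)$ on each of its three branches, but the direct inequality check above is shorter and makes the degeneration transparent.
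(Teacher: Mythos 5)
Your proof is correct and follows essentially the same route as the paper: both substitute $\ell_i=\ell$, $N=2\ell$ into the defining inequalities of $H_{\ell,N}^2$ and observe that $|\nu|\le\ell$ implies all the others, the only nontrivial one being $\nu_1+2\nu_2=|\nu|+\nu_2\le 2\ell$ (which you state correctly; the paper's version of this line contains a harmless typo). Nothing further is needed.
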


\begin{proof}
Since $N = 2 \ell$ and $|\ell |  = 3 \ell$, we also have $\za_1 = 2\ell - 2 \nu_2$ and $\za_2 = \ell$. Hence,
$$
   H_{\ell,N}^2 =  \big \{(\nu_1,\nu_2): |\nu| \le 2 \ell, \, |\nu| \le \ell, \, \nu_1 \le \ell,  \,  \nu_2 \le \ell,
        \nu_1+2\nu_2 \le 2 \ell\big \}
$$
by the definition of $H_{\ell,N}^2$. Clearly, $|\nu|\le \ell$ implies $\nu_i \le \ell$ and it also implies
$\nu_1 + 2 \nu_2 = \nu_1 + |\nu| \le 2 \ell$, which proves the statement. 
\end{proof}

For the Hahn polynomials $Q_\nu(\cdot; \k,N)$ on $V_N^d$, the closed form expression of the reproducing
kernel is used to prove that the Poisson kernel 
$$
  \Phi_r(W_{\k,N};x,y): = \sum_{n=0}^{N} \sP_n(W_{\k,N};x,y)  r^n, \qquad 0 \le r \le 1
$$
is nonnegative for all $x,y \in V_N^d$ and $0 \le r \le 1$, see \cite{X15}. The proof relies on the fact that the
corresponding function 
$$
  \CE_k (x,y; \k) :=  \sum_{\substack{|\g|= k}}   \frac{(-X)_{\g}(-Y)_\g}{(\k+1)_\g \g!}, 
 $$    
for $\k_i > -1$ is nonnegative for $x,y \in V_N^d$. 

For the Hahn polynomials with negative integer parameters, we could define the Poisson kernel analogously 
as 
$$
  \Phi_r(\sH_{\ell,N};x,y): = \sum_{n=0}^{M} \sP_n(\sH_{\ell,N};x,y)  r^n, \qquad 0 \le r \le 1,
$$ 
where $M \le N$ is the highest degree $M = \max\{ |\nu|: \nu \in H_{\ell,N}^d \}$. This kernel, however,
is no longer nonnegative, as shown by examples of small $\ell$ and $N$. One of the reasons that the
proof in \cite{X15} fails is that $\CE_k(x,y;\ell)$ is of the sign $(-1)^k$ instead of nonnegative. Furthermore, 
for $M > \ell_{\min}$, we could question if the definition $\Phi_r(\sH_{\ell,N};\cdot,\cdot)$ reflects the symmetry
of $\sH_{\ell,N}^d$. In this regard, we may examine the special case when $d=2$, $\ell_i = \ell$ and $N = 2 \ell$ 
in Proposition \ref{prop:d=2triangle} closely. 

\begin{prop}
Let $d=2$, $\ell_i = \ell$, $1 \le i \le 3$, and $N = 2 \ell$. Then, for all $x_2$,
\begin{align*}
 \Phi_r\big (\sH_{\ell,N}; \big((x_1, x_2), (0,\ell) \big)\big) 
   = \sum_{n=0}^\ell  \frac{(-1)^n (-3 \ell-1)_n (-3 \ell)_{2n}}{n! (-3 \ell -1)_{2n}}  \sQ_{n}(x_1; \ell, 2 \ell, 2 \ell) r^n
\end{align*}
\end{prop}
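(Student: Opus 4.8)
The plan is to show that, at each degree $n$, the reproducing kernel $\sP_n(\sH_{\ell,N};(x_1,x_2),(0,\ell))$ collapses to a single term and then to match the resulting coefficient against the one claimed. By Proposition \ref{prop:d=2triangle} we have $H_{\ell,N}^2=\{\nu:|\nu|\le\ell\}$, so the top degree is $M=\ell$ and the Poisson kernel is $\sum_{n=0}^\ell \sP_n r^n$; it therefore suffices to identify $\sP_n(\sH_{\ell,N};(x_1,x_2),(0,\ell))$ with the coefficient of $r^n$ on the right-hand side, for $0\le n\le\ell$.

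First I would evaluate the building blocks of $\sP_n$ at the corner $y=(0,\ell)$. Using the factorization \eqref{eq:Hahn2D} with $\ell_1=\ell_2=\ell_3=\ell$ and $N=2\ell$, the second factor of $\sQ_\nu((0,\ell);\ell,2\ell)$ is $(-2\ell)_{\nu_2}\,\sQ_{\nu_2}(\ell;\ell,\ell,2\ell)$. The Hahn polynomial here is a ${}_3F_2$ whose upper parameter $-\ell$ cancels the lower parameter $-\ell_1=-\ell$, reducing it to ${}_2F_1(-\nu_2,\nu_2-2\ell-1;-2\ell;1)$; by Chu--Vandermonde this equals $(1-\nu_2)_{\nu_2}/(-2\ell)_{\nu_2}$, which is $1$ for $\nu_2=0$ and vanishes for $\nu_2\ge1$ since $(1-\nu_2)_{\nu_2}$ contains the factor $0$. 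As the first factor $\sQ_{\nu_1}(0;\cdot,\cdot,\cdot)=1$ (only the $k=0$ term survives in its ${}_3F_2$, because $(-x)_k|_{x=0}=\delta_{k,0}$), we conclude $\sQ_\nu((0,\ell);\ell,2\ell)=0$ unless $\nu_2=0$. Since $\sB_\nu(\ell,2\ell)\neq0$ for $\nu\in H_{\ell,N}^2$, the sum defining $\sP_n$ retains only the single admissible term $\nu=(n,0)$ (which lies in $H_{\ell,N}^2$ because $n\le\ell$).

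It then remains to compute that term. Using $\sQ_m(0;\cdot,\cdot,\cdot)=1$ and $\sQ_0\equiv1$, the factorization \eqref{eq:Hahn2D} gives $\sQ_{(n,0)}(x;\ell,2\ell)=(-2\ell)_n\,\sQ_n(x_1;\ell,2\ell,2\ell)$ and $\sQ_{(n,0)}((0,\ell);\ell,2\ell)=(-2\ell)_n$; in particular the $x_2$-dependence disappears, consistent with the claim holding for all $x_2$. Reading off $\sB_{(n,0)}(\ell,2\ell)$ from \eqref{eq:Bnu} (where $\za_1=2\ell$, the $j=2$ factor is trivial, and $|\ell|=3\ell$) and substituting, the factors $(-2\ell)_n$ cancel and one obtains
$$
\sP_n = (-1)^n\,\frac{(-3\ell)_{2\ell}\,(-3\ell)_{2n}\,(-\ell)_n\,(-3\ell-1)_n}{(-3\ell)_{2\ell+n}\,(-3\ell-1)_{2n}\,n!}\,\sQ_n(x_1;\ell,2\ell,2\ell).
$$
The final step applies $(a)_{m+k}=(a)_m(a+m)_k$ with $a=-3\ell$, $m=2\ell$, $k=n$, so that $(-3\ell)_{2\ell+n}=(-3\ell)_{2\ell}(-\ell)_n$; this collapses $(-3\ell)_{2\ell}(-\ell)_n/(-3\ell)_{2\ell+n}$ to $1$ and leaves exactly the coefficient $(-1)^n(-3\ell-1)_n(-3\ell)_{2n}/\big(n!\,(-3\ell-1)_{2n}\big)$ of $\sQ_n(x_1;\ell,2\ell,2\ell)$ in the stated formula.

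The main obstacle is the vanishing argument of the second step: recognizing the parameter cancellation that turns the ${}_3F_2$ into a Chu--Vandermonde ${}_2F_1$, and observing that the resulting Pochhammer symbol $(1-\nu_2)_{\nu_2}$ is zero precisely when $\nu_2\ge1$, which is what forces the double sum down to the single index $\nu=(n,0)$. Once this collapse is established, the remainder is careful but routine bookkeeping with Pochhammer symbols, the only nontrivial simplification being the splitting identity $(a)_{m+k}=(a)_m(a+m)_k$ at the end.
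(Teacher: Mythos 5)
Your proposal is correct and follows essentially the same route as the paper: evaluate $\sQ_\nu$ at the corner $(0,\ell)$ via the factorization \eqref{eq:Hahn2D} and Chu--Vandermonde to get $\sQ_\nu((0,\ell);\ell,2\ell)=(-2\ell)_{\nu_1}\delta_{\nu_2,0}$, reduce each $\sP_n$ to the single term $\nu=(n,0)$, and divide by the norm $\sB_{(n,0)}(\ell,2\ell)$ read off from \eqref{eq:Bnu}. The only difference is that you spell out the details the paper leaves as "easy to verify" (the ${}_3F_2\to{}_2F_1$ parameter cancellation and the Pochhammer splitting $(-3\ell)_{2\ell+n}=(-3\ell)_{2\ell}(-\ell)_n$), which is a faithful filling-in rather than a different argument.
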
 

\begin{proof}
Using the explicit formula of $\sQ_{\nu}(\cdot; \ell, N)$ in \eqref{eq:Hahn2D}, it is easy to verify that 
$$
   \sQ_{\nu_1,\nu_2} ( (0,\ell); \ell, 2 \ell) = (-2 \ell)_{\nu_1} \delta_{\nu_2,0}
$$
by the Chu-Vandermonde identity. For $\nu_2 =0$, we have
$$
     \sQ_{\nu_1,0} ( x; \ell, 2 \ell)  = (-2\ell)_{\nu_1} \sQ_{\nu_1}(x_1; \ell, 2 \ell, 2 \ell). 
$$
Furthermore, by \eqref{eq:Bnu}, the norm of $\sQ_{\nu_1,0}$ is equal to
$$
B_{\nu_1,0}(\ell, 2\ell) = \frac{(-1)^{\nu_1} \nu_1 ! (-2\ell)_{\nu_1}^2 (-3 \ell -1)_{2\nu_1}}
    {(-3 \ell-1)_{\nu_1} (-3 \ell)_{2\nu_1}}.
$$
Together, these identities lead to 
\begin{align*}
  \sP_n \big(\sH_{\ell,2\ell}; (\ell, x_2),( 0,\ell) \big) \, & = \sum_{\nu_1 =0}^n \frac{ \sQ_{\nu_1, n- \nu_1} 
      ( (0,\ell); \ell, 2 \ell)  \sQ_{\nu_1, n-\nu_1} (x; \ell, 2 \ell) }{B_{\nu_1,n-\nu_1}(\ell, 2\ell)}  \\
     & = \frac{(-1)^n (-3 \ell-1)_n (-3 \ell)_{2n}}{n! (-3 \ell -1)_{2n}}  \sQ_{n}(x_1; \ell, 2 \ell, 2 \ell). 
\end{align*}
from which the formula for $\Phi_r\big(\sH_{\ell,N}; (\ell, x_2), (0,\ell) \big)$ follows immediately. 
\end{proof}

This gives an explicit expression for the Poisson kernel $\Phi_r(\sH_{\ell,2\ell}; \cdot,\cdot)$, which is a polynomial 
of degree $\ell$ in the variable $r$, at the points $(x_1, x_2)$ and $(0,\ell)$ in $V_{\ell,2\ell}^2$, which depends
on $x_1$ but not $x_2$. Numerical experiment shows that this polynomial changes sign once on $[0,1]$ 
when $\ell$ is even and $x = (\ell, x_2)$ and, in some cases, when $\ell$ is odd  and $x$ is an even integer.

\section{Bispectral properties}
\setcounter{equation}{0}
The Hahn polynomials for the hypergeometric distribution can be characterized as common eigenfunctions of two families of commutative algebras of difference operators: one acting on the variables $x_1,\dots,x_d$, and another one acting on the indices $\nu_1,\dots,\nu_d$. These operators can be linked to mutually commuting symmetries of a discrete extension of the generic quantum superintegrable system on the sphere \cite{DGVV,I17,IX20,KMP}. 
We discuss these families of operators in the next subsections.

In this section, let $\wh \sQ_\nu(x;\ell, N)$ denote the Hahn polynomials normalized as follows
\begin{align} \label{eq:Hahn-n}
 \wh \sQ_\nu(x;\ell, N) = \frac{1}{(-N)_{|\nu|}}\prod_{j=1}^d  &
     (-N+|\xb_{j-1}|+|\bnu^{j+1}|)_{\nu_j} \\
      & \times \sQ_{\nu_j}\left(x_j;  \ell_j, \za_j, N- |\xb_{j-1}|-|\bnu^{j+1}|\right). \notag
\end{align}

The polynomial $\wh \sQ_\nu$ differs from $\sQ_\nu$ by an an extra factor $(-N)_{|\nu|}$ in the denominator. This factor makes it possible to write the recurrence operators $\CL_d^\nu$, defined in 
\eqref{eq:operatordn}, in its relatively simple form. 

\subsection{Spectral equations in the variables.}
We denote by $\{e_1,e_2,\dots,e_d\}$ the standard basis for $\RR^d$, and by $E_{x_i}$ and $E_{x_i}^{-1}$ the shift operators acting on a function $f(x)$  as follows
\begin{align*}
E_{x_i}f(x)=f(x+e_i) \quad \hbox{and}\quad E_{x_i}^{-1}f(x)=f(x-e_i).
\end{align*}
The operator 
\begin{align}
\CL^{x}_{d}&:=\CL^{x}_{d}(x;\ell;N)=\sum_{1\leq i\neq j\leq d}x_j(x_i-\ell_i)(E_iE_j^{-1}-1)\label{eq:operatordx}\\
&+\sum_{i=1}^d(x_i-\ell_i)(N-|x|)(E_i-1)+ \sum_{i=1}^dx_i(N-|x|-\ell_{d+1})(E_i^{-1}-1),\nonumber
\end{align}
is self-adjoint with respect to the hypergeometric distribution and acts diagonally on the basis of polynomials in 
\eqref{eq:Hahn-n} with eigenvalue $-|\nu|(|\nu|-|\ell|-1)$ which depends only on the total degree $|\nu|$ of the 
polynomial $\wh \sQ_\nu(x;\ell, N)$, see \cite[Section 5]{IX17}. Fix now $k<d$ and note now that, up to a factor 
independent of $x_1,\dots,x_{d-k}$, the product of the last $k$-terms in \eqref{eq:Hahn-n} can be regarded as 
a Hahn polynomial in the variables $\xt=\xb^{d-k+1}=(x_{d-k+1},\dots,x_d)$, with indices 
$\tilde{\nu}=\bnu^{d-k+1}=(\nu_{d-k+1},\dots,\nu_d)$, and parameters 
$\tilde{\ell}=\bell^{d-k+1}=(\ell_{d-k+1},\dots,\ell_{d+1})$, $\tilde{N}=N-|\xb_{d-k}|$. Therefore, if we set 
$$
\CL^{x}_{k}:=\CL^{x}_{k}(\xb^{d-k+1}; \bell^{d-k+1}; N-|\xb_{d-k}|), \qquad\text{ for }k=1,\dots,d-1,
$$
we see that polynomials in \eqref{eq:Hahn-n} will be eigenfunctions of the operators $\CL^{x}_{1},\dots, \CL^{x}_{d}$ 
and satisfy the spectral equations, for $k=1,\dots,d$, 
\begin{equation}\label{eq:spx}
\CL^{x}_{k} \wh \sQ_\nu(x;\ell, N)=-|\bnu^{d-k+1}| (|\bnu^{d-k+1}|-|\bell^{d-k+1}|-1) \wh \sQ_\nu(x;\ell, N). 
\end{equation}
From these equations it follows easily that the operators $\CL^{x}_{1},\dots, \CL^{x}_{d}$ commute with each other. 
They generate a Gaudin subalgebra for a representation of the Kohno-Drinfeld algebra associated with the 
hypergeometric distribution. The operators in the larger algebra can be regarded as  symmetries, or integrals 
of motion, for a discrete extension of the generic quantum superintegrable system on the sphere, see \cite{IX20}.

\subsection{Spectral equations in the indices.} In this section, we will use the shift operators $E_{\nu_i}$ 
and $E_{\nu_i}^{-1}$ acting on a function $f_{\nu}$  as follows
\begin{align*}
E_{\nu_i}f_{\nu}=f_{\nu+e_i} \quad \hbox{and}\quad E_{\nu_i}^{-1}f_{\nu}=f_{\nu-e_i}.
\end{align*}

For $j,k\in\{0,\pm1\}^2$ and $i\in\{1,\dots,d\}$, we define $B_{i}^{j,k}$
as follows
\begin{align*}
&B_i^{0,0}=|\bnu^{i}|(|\bnu^{i}| -|\bell^{i}|-1)+|\bnu^{i+1}|(|\bnu^{i+1}| -|\bell^{i+1}|-1)
+ \frac{|\bell^{i+1}|(|\bell^{i}|+2)}{2},\\
&B_i^{0,1}=-\nu_i(\nu_i+2|\bnu^{i+1}|-|\bell^{i}|-1),\\
&B_i^{0,-1}=(\ell_i-\nu_i)(\nu_i+2|\bnu^{i+1}|-|\bell^{i+1}|-1),\\
&B_i^{1,0}=(\ell_i-\nu_i)(\nu_i+2|\bnu^{i+1}|-|\bell^{i}|-1),\\
&B_i^{-1,0}=-\nu_i(\nu_i+2|\bnu^{i+1}|-|\bell^{i+1}|-1),\\
&B_i^{1,1}=(\nu_i+2|\bnu^{i+1}|-|\bell^{i}|-1)(\nu_i+2|\bnu^{i+1}|-|\bell^{i}|),\\
&B_i^{-1,1}=\nu_i (\nu_i-1),\\
&B_i^{1,-1}=(\ell_i-\nu_i) (\ell_i-\nu_i-1),\\
&B_i^{-1,-1}=(\nu_i+2|\bnu^{i+1}|-|\bell^{i+1}|-1)(\nu_i+2|\bnu^{i+1}|-|\bell^{i+1}|-2).
\end{align*}
We extend the formulas above and define $B_{0}^{0,k}$ for $k\in \{0,\pm1\}$ as follows
\begin{equation*}
B_0^{0,0}=-N+ |\ell|/2, \quad B_0^{0,1}=N-|\nu|, \quad B_0^{0,-1}=|\nu|-|\ell|+N-1.
\end{equation*}
Next, for $i\in \{1,\dots, d\}$ we define 
\begin{align*}
&b_i^{0}=\frac{(2|\bnu^{i}|-|\bell^{i}|)(2|\bnu^{i}|-|\bell^{i}|-2)}{2},\\
&b_i^{1}=(2|\bnu^{i}|-|\bell^{i}|)(2|\bnu^{i}|-|\bell^{i}|-1),\\
&b_i^{-1}=(2|\bnu^{i}|-|\bell^{i}|-2)(2|\bnu^{i}|-|\bell^{i}|-1),
\end{align*}
and for $\mu = (\mu_1,\ldots, \mu_d) \in \{0,\pm 1\}^d$ we set 
\begin{equation*}
C_{\mu} = \frac{\prod_{k=0}^{d}B_k^{\mu_{k},\mu_{k+1}}}{\prod_{k=1}^{d}b_k^{\mu_{k}}},
\end{equation*}
with the convention that $\mu_0=\mu_{d+1}=0$. 
With the above notations, we define an operator acting on the indices as follows 
\begin{align}
\CL^{\nu}_{d}:=\CL^{\nu}_{d}(\nu;\ell;N)=\sum_{0\neq \mu \in  \{-1,0,1\}^d} C_{\mu} 
\left(\prod_{k=1}^{d}E_{\nu_k}^{\mu_{k}-\mu_{k+1}}-1\right).\label{eq:operatordn}
\end{align}
Note that this operator is significantly more complicated than the difference operator $\CL^{x}_{d}$ 
in \eqref{eq:operatordx}. It can be obtained as a limit of the image of the $d$-dimensional Racah operator 
under the bispectral involution associated with the Racah polynomials, see \cite[Section 5.2]{GI}. In particular, 
this shows that
\begin{equation}\label{eq:dn}
\CL^{\nu}_{d}(\nu;\ell;N)\wh \sQ_\nu(x;\ell, N)=|x|\,\wh \sQ_\nu(x;\ell, N). 
\end{equation}

\begin{rem}[Boundary conditions]
Note that the operator $\CL^{\nu}_{d}$ contains backward shift operators, and the polynomials $\wh \sQ_\nu(x;\ell, N)$ 
are defined for $\nu\in\NN_0^d$. However, one can easily see that the coefficients of the operator $\CL^{\nu}_{d}$ 
that multiply the polynomials containing negative indices are zero, so we can simply ignore these terms. Indeed, 
the operator  $\CL^{\nu}_{d}$ will contain a negative power of $E_{\nu_k}$ in one of the following cases:
\begin{itemize}
\item[Case 1:] $\mu_{k}-\mu_{k+1}=-1$, and the corresponding term in the sum in \eqref{eq:operatordn}  
contains $E_{\nu_k}^{-1}$. Note that $\mu_{k}-\mu_{k+1}=-1$ is possible for two sets of indices
\begin{itemize}
\item  when $(\mu_k,\mu_{k+1})=(-1,0)$, in which case $C_{\mu}$ contains the term 
$B_k^{-1,0}$, which is $0$ when $\nu_k=0$,
\item or when  $(\mu_k,\mu_{k+1})=(0,1)$,  in which case $C_{\mu}$ contains the term $B_k^{0,1}$, which is  
$0$ when $\nu_k=0$.
\end{itemize}
\item[Case 2:] $\mu_{k}-\mu_{k+1}=-2$, and the corresponding term in the sum in \eqref{eq:operatordn}  contains $E_{\nu_k}^{-2}$. This happens only when $\mu_k=-1$ and $\mu_{k+1}=1$, in which case $C_{\mu}$ contains the term $B_k^{-1,1}$, which is  $0$ when $\nu_k=0$ or $\nu_{k}=1$.
\end{itemize}

This shows that we can ignore all terms containing negative indices. With this convention, the results in \cite[Section 5.2]
{GI} imply that \eqref{eq:dn} holds for all $\nu\in\NN_0^d $ when $\ell_j$ and $N$ are generic (non-integer) parameters.
For parameters $\ell_j\in\NN$ and $N\in\NN$, equation \eqref{eq:dn} holds  when the indices on the left-hand side  
belong to the set $H_{\ell,N}^d$. We will use the same convention for the other operators we construct in this section. 
\end{rem}

Similarly to the difference operators in $x$, we can construct a family of commuting difference operators acting on the indices $\nu_1,\dots,\nu_d$, which represent the multiplication by the variables $x_1,\dots,x_d$ in the basis \eqref{eq:Hahn-n}.

\begin{thm}\label{thm:rec}
If we set 
$$\CL^{\nu}_{k}:=\CL^{\nu}_{k}(\bnu_{k}; \ell_{1},\dots,\ell_{k}, |\bell^{k+1}|-2|\bnu^{k+1}|; N-|\bnu^{k+1}|), \qquad\text{ for }k=1,\dots,d-1,$$
the polynomials in \eqref{eq:Hahn-n} will satisfy also the following spectral equations 
\begin{equation}\label{eq:spn}
\CL^{\nu}_{k}\wh \sQ_\nu(x;\ell, N)= |\xb_k|\, \wh \sQ_\nu(x;\ell, N), \quad 
\text{for }k=1,\dots,d.
\end{equation}
\end{thm}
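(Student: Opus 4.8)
The plan is to mimic the nested structure already exploited for the variable operators in \eqref{eq:spx}: exhibit $\wh\sQ_\nu(x;\ell,N)$ as a product of a ``head'' that is genuinely a $k$-variable Hahn polynomial in $\xb_k$ times a ``tail'' that the operator $\CL^\nu_k$ does not see, and then invoke the already-established top case \eqref{eq:dn} in dimension $k$. The case $k=d$ is precisely \eqref{eq:dn}, so it suffices to treat $k<d$.

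First I would record the factorization. Write $\hat N:=N-|\bnu^{k+1}|$ and $\tilde\ell:=(\ell_1,\dots,\ell_k,\za_k)$, where $\za_k=|\bell^{k+1}|-2|\bnu^{k+1}|$ by \eqref{eq:saj}, so that $\CL^\nu_k=\CL^\nu_k(\bnu_k;\tilde\ell;\hat N)$ is the $k$-dimensional index operator built from the data $(\tilde\ell,\hat N)$. Examining \eqref{eq:Hahn-n} factor by factor for $1\le j\le k$ I would check three compatibilities. First, the reduced auxiliary parameters of the $k$-variable polynomial equal the original ones, $|\tilde\bell^{j+1}|-2|\tilde\bnu^{j+1}|=\za_j$, because the contribution $-2|\bnu^{k+1}|$ carried inside the reduced last parameter $\za_k$ cancels the matching term in $-2|\tilde\bnu^{j+1}|$. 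Second, as a consequence each one-variable factor $\sQ_{\nu_j}(x_j;\ell_j,\za_j,N-|\xb_{j-1}|-|\bnu^{j+1}|)$ and its Pochhammer prefactor are literally identical in the full and in the reduced polynomial (the identity $\hat N-|\xb_{j-1}|-|\tilde\bnu^{j+1}|=N-|\xb_{j-1}|-|\bnu^{j+1}|$ being the key check). Third, the normalizations combine through $(-N)_{|\nu|}=(-N)_{|\bnu^{k+1}|}\,(-\hat N)_{|\bnu_k|}$, so that the constant relating the two normalized polynomials is $(-\hat N)_{|\bnu_k|}/(-N)_{|\nu|}=1/(-N)_{|\bnu^{k+1}|}$, which is \emph{independent of} $\bnu_k$. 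Together these give
\[
 \wh\sQ_\nu(x;\ell,N)=\frac{1}{(-N)_{|\bnu^{k+1}|}}\,T\,\wh\sQ_{\bnu_k}(\xb_k;\tilde\ell,\hat N),
\]
where $T$ collects the last $d-k$ factors of \eqref{eq:Hahn-n} and depends only on $\nu_{k+1},\dots,\nu_d$ and on $x$, never on $\nu_1,\dots,\nu_k$. This cancellation is exactly what the $\wh\sQ$-normalization was introduced for, and it is the crux of the argument.

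With this in hand the conclusion is immediate: since $\CL^\nu_k$ involves only the shifts $E_{\nu_1}^{\pm1},\dots,E_{\nu_k}^{\pm1}$ with coefficients built from $\bnu_k$ and the frozen data $(\tilde\ell,\hat N)$, it passes through the $\bnu_k$-independent factor $T/(-N)_{|\bnu^{k+1}|}$; applying \eqref{eq:dn} in dimension $k$ to the genuine $k$-variable Hahn polynomial $\wh\sQ_{\bnu_k}(\xb_k;\tilde\ell,\hat N)$ produces the eigenvalue $|\xb_k|$, and restoring the frozen factors recovers $\wh\sQ_\nu$. I expect the main obstacle to be not conceptual but a matter of careful bookkeeping: cleanly verifying the parameter identity $\tilde\za_j=\za_j$ and the normalization split, and---more delicately---justifying the use of \eqref{eq:dn} for the reduced system, whose last parameter $\za_k$ and size $\hat N$ need not satisfy \eqref{eq:l-condition}. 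As explained in the Remark, this is handled by first invoking the generic (non-integer) parameter form of \eqref{eq:dn} from \cite{GI} and then specializing, retaining the convention that terms carrying out-of-range indices come with vanishing coefficients.
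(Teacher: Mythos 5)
Your proposal is correct and follows essentially the same route as the paper: the case $k=d$ is \eqref{eq:dn}, and for $k<d$ one observes that, up to a factor independent of $\nu_1,\dots,\nu_k$, the polynomial $\wh\sQ_\nu(x;\ell,N)$ coincides with the $k$-variable Hahn polynomial in $\xb_k$ with indices $\bnu_k$ and parameters $\tilde\ell=(\ell_1,\dots,\ell_k,|\bell^{k+1}|-2|\bnu^{k+1}|)$, $\tilde N=N-|\bnu^{k+1}|$, after which the $k$-dimensional analog of \eqref{eq:dn} applies. Your write-up merely makes explicit the bookkeeping (the identities $\tilde\za_j=\za_j$, the matching of the arguments, and the normalization split via $(-N)_{|\nu|}=(-N)_{|\bnu^{k+1}|}(-\tilde N)_{|\bnu_k|}$) that the paper leaves as a one-line observation, and the point about generic parameters is exactly the convention handled in the paper's preceding Remark.
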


\begin{proof}
If $k=d$, equation \eqref{eq:spn} follows from \eqref{eq:dn}. 
Fix now $k<d$ and note that, for $j>k$ the terms in the product on the right-hand side of \eqref{eq:Hahn-n} are independent of $\bnu_k=(\nu_1,\dots,\nu_k)$. From this, it follows that, up to a factor independent of 
$\nu_1,\dots,\nu_k$, $\wh \sQ_\nu(x;\ell, N)$ coincides with the Hahn polynomial in the variables 
$\xt=\xb_{k}=(x_{1},\dots,x_{k})$, with indices $\tilde{\nu}=\bnu_{k}=(\nu_{1},\dots,\nu_{k})$, and parameters 
$\tilde{\ell}=(\ell_{1},\ell_{2},\dots,\ell_{k}, |\bell^{k+1}|-2|\bnu^{k+1}|)$, $\tilde{N}=N-|\bnu^{k+1}|$. 
The proof now follows from the $k$-dimensional analog of \eqref{eq:dn}.
\end{proof}

\begin{rem}
In view of \cite{DG}, we refer to  \eqref{eq:spx} and \eqref{eq:spn} as bispectral equations for the Hahn polynomials. 
\end{rem}

\begin{rem}
The equations  \eqref{eq:spn} also lead to three-term relation for $\wh \sQ_\nu(\cdot, \ell,N)$ since we can write
them as
\begin{equation}\label{eq:spn2}
(\CL^{\nu}_{k}-\CL^{\nu}_{k-1})\wh \sQ_\nu(x;\ell, N)= x_k \wh \sQ_\nu(x;\ell, N), \quad 
\text{for }k=1,\dots,d,
\end{equation}
where $\CL^{\nu}_{0}$ denotes the zero operator.  
\end{rem}

\subsection{Explicit formulas in dimension two}
In this subsection, we write the operators  and the difference equations in Theorem~\ref{thm:rec} in dimension two. Note that when $d=2$, we have $|\nu|=\nu_1+\nu_2$ and $|\ell|=\ell_1+\ell_2+\ell_3$. 
The operator in \eqref{eq:operatordn} can be written as 
\begin{align*}
\CL^{\nu}_{2}&=C_{1,0}E_{\nu_1}+C_{1,1}E_{\nu_2} +C_{1,-1}E_{\nu_1}^{2}E_{\nu_2}^{-1}\\
&\qquad +C_{0,1}E_{\nu_1}^{-1}E_{\nu_2}+C_{0,-1}E_{\nu_1}E_{\nu_2}^{-1}+\wh{C}_{0,0}\\
&\qquad\qquad +C_{-1,0}E_{\nu_1}^{-1}+C_{-1,-1}E_{\nu_2}^{-1}+C_{-1,1}E_{\nu_1}^{-2}E_{\nu_2},
\end{align*}
where
\begin{align*}
C_{1,0}&=\frac{(N-|\nu|) (\ell_1-\nu_1) (\nu_1+2\nu_2-|\ell|-1) }{(2|\nu|-|\ell|)(2|\nu|-|\ell|-1)(2\nu_2-\ell_2-\ell_3)(2\nu_2-\ell_2-\ell_3-2)}\\
&\qquad\qquad \times (2\nu_2(\nu_2-\ell_2-\ell_3-1)+\ell_3(\ell_2+\ell_3+2))\\
C_{-1,0}&=-\frac{(|\nu|-|\ell|+N-1)\nu_1(\nu_1+2\nu_2-\ell_2-\ell_3-1) }{(2|\nu|-|\ell|-2)(2|\nu|-|\ell|-1)(2\nu_2-\ell_2-\ell_3)(2\nu_2-\ell_2-\ell_3-2)}\\
&\qquad\qquad \times (2\nu_2(\nu_2-\ell_2-\ell_3-1)+\ell_3(\ell_2+\ell_3+2))\\
C_{0,1}&=\frac{(2N-|\ell|)\nu_1 (\nu_1+2\nu_2-|\ell|-1) (\ell_2-\nu_2)(\nu_2-\ell_2-\ell_3-1)}{(2|\nu|-|\ell|)(2|\nu|-|\ell|-2)(2\nu_2-\ell_2-\ell_3)(2\nu_2-\ell_2-\ell_3-1)}\\
C_{0,-1}&=\frac{(2N-|\ell|)(\ell_1-\nu_1)(\nu_1+2\nu_2-\ell_2-\ell_3-1) \nu_2(\nu_2-\ell_3-1)}{(2|\nu|-|\ell|)(2|\nu|-|\ell|-2)(2\nu_2-\ell_2-\ell_3-2)(2\nu_2-\ell_2-\ell_3-1)}\\
C_{1,1}&=\frac{(N-|\nu|) (\nu_1+2\nu_2-|\ell|-1) (\nu_1+2\nu_2-|\ell|) (\ell_2-\nu_2)(\nu_2-\ell_2-\ell_3-1)}{(2|\nu|-|\ell|)(2|\nu|-|\ell|-1)(2\nu_2-\ell_2-\ell_3)(2\nu_2-\ell_2-\ell_3-1)}\\
C_{-1,1}&=\frac{(|\nu|-|\ell|+N-1) \nu_1 (\nu_1-1) (\ell_2-\nu_2)(\nu_2-\ell_2-\ell_3-1)}{(2|\nu|-|\ell|-2)(2|\nu|-|\ell|-1)(2\nu_2-\ell_2-\ell_3)(2\nu_2-\ell_2-\ell_3-1)}\\
C_{1,-1}&=-\frac{(N-|\nu|) (\ell_1-\nu_1) (\ell_1-\nu_1-1) \nu_2(\nu_2-\ell_3-1)}{(2|\nu|-|\ell|)(2|\nu|-|\ell|-1)(2\nu_2-\ell_2-\ell_3-2)(2\nu_2-\ell_2-\ell_3-1)}\\
C_{-1,-1}&=-\frac{(|\nu|-|\ell|+N-1) (\nu_1+2\nu_2-\ell_2-\ell_3-1) (\nu_1+2\nu_2-\ell_2-\ell_3-2) }{(2|\nu|-|\ell|-2)(2|\nu|-|\ell|-1)(2\nu_2-\ell_2-\ell_3)(2\nu_2-\ell_2-\ell_3-1)}\\
&\qquad\qquad\times \nu_2(\nu_2-\ell_3-1),
\end{align*}
and  
$$\wh{C}_{0,0}=-\sum_{\mu \in  \{-1,0,1\}^2\setminus\{(0,0)\}} C_{\mu}.$$
The operator $\CL^{\nu}_{1}$ can be written as 
\begin{align*}
\CL^{\nu}_{1}&=C_{1}' (E_{\nu_1}-1) +C_{-1}' (E_{\nu_1}^{-1}-1),
\end{align*}
where
\begin{align*}
C_{1}'&=\frac{(N-|\nu|) (\ell_1-\nu_1) (\nu_1+2\nu_2-|\ell|-1) }{(2|\nu|-|\ell|)(2|\nu|-|\ell|-1)}\\
C_{-1}'&=-\frac{(|\nu|-|\ell|+N-1)\nu_1(\nu_1+2\nu_2-\ell_2-\ell_3-1) }{(2|\nu|-|\ell|-2)(2|\nu|-|\ell|-1)}.
\end{align*}
With the above formulas, equations \eqref{eq:spn} take the form
\begin{align*}
&\CL^{\nu}_{1}\wh \sQ_\nu(x;\ell, N)= x_1 \wh \sQ_\nu(x;\ell, N),\\
&\CL^{\nu}_{2}\wh \sQ_\nu(x;\ell, N)= (x_1+x_2) \wh \sQ_\nu(x;\ell, N).
\end{align*}


\begin{thebibliography}{99}
\bibitem{AAR}
         G.\ E.\ Andrews, R. Askey and R. Roy,
	\textit{Special functions}, 
	Encyclopedia of Mathematics and its Applications \textbf{71}, 
	Cambridge University Press, Cambridge, 1999. 

\bibitem{CHS}  
         J.~Cullinan, F.~Hajir and E.~Sell,  
         {Algebraic properties of a family of Jacobi polynomials},
         \textit{J. Th\'eor. Nombres Bordeaux}  \textbf{21} (2009), 97–108. 

\bibitem{DGVV} 
          H.~De Bie, V.~X.~Genest, W.~van de Vijver and L.~Vinet,
          {A higher rank Racah algebra and the $\ZZ_2^n$ Laplace-Dunkl operator}, 
           {\it J. Phys. A} \textbf{51} (2018), no. 2, 025203, 20 pp.


\bibitem{DG} 
         J.~J.~Duistermaat and F.~A.~Gr\"unbaum, 
         {Differential equations in the spectral parameter}, 
         \textit{Comm. Math. Phys.} \textbf{103} (1986), 177--240.

\bibitem{Du1}
	C. F. Dunkl, 
        A difference equation and Hahn polynomials in two variables,
        \textit{Pacific J. Math.} \textbf{92} (1981), 57--71.

\bibitem{FT}
        M.~Filaseta and O.~Trifonov, 
        {The irreducibility of the Bessel polynomials}, 
        \textit{J. Reine Angew. Math.} \textbf{550} (2002), 125--140.


\bibitem{GI}
         J. Geronimo and P. Iliev,
         {Bispectrality of multivariable Racah-Wilson polynomials},
         \textit{ Constr. Approx.} \textbf{31} (2010), 417--457.

\bibitem{I17} 
         P. Iliev, 
         {The generic quantum superintegrable system on the sphere and Racah operators}, 
         {\it Lett. Math. Phys.} \textbf{107} (2017), no. 11, 2029--2045.

\bibitem{IX07}
        P. Iliev and Y. Xu,
        Discrete orthogonal polynomials and difference equations of several variables, 
        \textit{Adv. in Math.}, \textbf{212} (2007), 1--36. 
 
\bibitem{IX17}
        P. Iliev and Y. Xu,
        Connection coefficients of classical orthogonal polynomials of several variables,
        \textit{Adv. in Math.}, \textbf{310} (2017), 290--306. 

\bibitem{IX20}
        P. Iliev and Y. Xu,
         Hahn polynomials on polyhedra and quantum integrability.         
         \textit{Adv. in Math.}, \textbf{364} (2020), 107032.
                
\bibitem{JKB}
         N. Johnson, S. Kotz and N. Balakrishnan, 
         \textit{Discrete multivariate distributions}, 
         Wiley Series in Probability and Statistics: Applied Probability and Statistics. 
         John Wiley \& Sons, Inc., New York, 1997.

\bibitem{KMP} 
          E.~G.~Kalnins, W.~Miller Jr. and S.~Post, 
          {Two-variable Wilson polynomials and the generic superintegrable system on the $3$-sphere}, 
          {\it SIGMA Symmetry Integrability Geom. Methods Appl.} \textbf{7} (2011), 051, 26 pages.

\bibitem{KM}
         S. Karlin and J. McGregor,
         Linear growth models with many types and multidimensional Hahn 
         polynomials, in
         \textit{Theory and applications of special functions}, 261--288, ed. 
         R.~A. Askey, Academic Press, New York, 1975.

\bibitem{PBM}        
         A. P. Prudnikov, Yu. A. Brychkov and O. I. Marichev, 
         \textit{Integrals and Series: More Special Functions}, Vol. 3,
         Gordon \& Breach Sci. Publishers, New York, 1990.
        
\bibitem{Sz}
      	G. Szeg\H{o},
	\textit{Orthogonal Polynomials},  
	Amer. Math. Soc. Colloq. Publ. Vol. 23, Providence, 4th edition, 1975.

\bibitem{X15}
         Y. Xu, 
         Hahn, Jacobi, and Krawtchouk polynomials of several variables,
         \textit{J. Approx. Theory}, \textbf{195} (2015), 19--42.
         
\end{thebibliography}
\end{document}